\newcommand{\vct}[1]{\boldsymbol{#1}}
\newcommand{\real}[1]{\ensuremath{\mathbb{R}^{#1}}{}}
\newcommand{\naturals}{\mathbb{N}}
\newcommand{\bmat}[1]{\begin{bmatrix}#1\end{bmatrix}}
\newcommand{\transpose}{^{T}}
\newcommand{\de}{\mathrm{d}}
\newcommand{\metric}[2]{\langle #1, #2\rangle}
\newcommand{\inverse}{^{-1}}
\newcommand{\dert}{\frac{\de}{\de t}}
\providecommand{\abs}[1]{\lvert#1\rvert}
\providecommand{\norm}[1]{\lVert#1\rVert}
\newcommand{\xxij}{{\vct{x}_{ij}}} 
\newcommand{\pij}{\phi_{ij}}
\newcommand{\ggij}{\vct{\gamma}_{ij}}
\newcommand{\M}{\mathcal{M}}
\newcommand{\calI}{\mathcal{I}}
\newcommand{\calB}{\mathcal{B}}
\newcommand{\calD}{\mathcal{D}}
\newcommand{\calE}{\mathcal{E}}
\newcommand{\Sset}{\mathcal{S}}
\newcommand{\calU}{\mathcal{U}}
\newcommand{\calV}{\mathcal{V}}
\newcommand{\X}{\mathcal{X}}
\newcommand{\kcurv}{\kappa}
\newcommand{\Sphere}[1]{\mathbb{S}^{#1}}
\newcommand{\Grassmann}{\mathrm{Grass}}
\newcommand{\Deg}{\mathrm{deg}(G)}
\newtheorem{thm}{Theorem}
\newtheorem{definition}[thm]{Definition}
\newtheorem{proposition}[thm]{Proposition}
\newtheorem{lemma}[thm]{Lemma}
\newtheorem{corollary}[thm]{Corollary}
\DeclareMathOperator*{\diag}{diag}
\DeclareMathOperator*{\argmin}{argmin}
\DeclareMathOperator*{\trace}{tr}
\DeclareMathOperator*{\inj}{inj}
\DeclareMathOperator{\grad}{{grad}}
\DeclareMathOperator{\hull}{hull}
\DeclareMathOperator{\diam}{diam}
\DeclareMathOperator{\sd}{s_\Delta}
\DeclareMathOperator{\cd}{c_\Delta}
\DeclareMathOperator{\sk}{s_\kappa}
\DeclareMathOperator{\ck}{c_\kappa}
\DeclareMathOperator{\shk}{sh_\kappa}
\DeclareMathOperator{\chk}{ch_\kappa}
\DeclareMathOperator{\hess}{Hess}
\newcommand{\sqd}{\sqrt{\Delta}}
\newcommand{\eg}{{e.g.,~}}
\newcommand{\ie}{{i.e.,~}}
\newcommand{\myparagraph}[1]{\smallskip\noindent\textbf{\emph{#1.}}}
\begin{document}
%
\title{Riemannian Consensus for Manifolds with Bounded Curvature}
%
%
%

\author{Roberto~Tron,~\IEEEmembership{Student Member,~IEEE,}
        Bijan Afsari,
        and~Ren\'e Vidal,~\IEEEmembership{Senior Member,~IEEE,}
\thanks{This work was supported by the grant NSF  CNS-0834470}
\thanks{The authors are with the Center for Imaging Science, Johns Hopkins University, Baltimore MD, 21202, USA}}

%
%

\markboth{IEEE Transactions on Automatic Control}%
{Tron \MakeLowercase{\textit{et al.}}: Riemannian Consensus for Manifolds with Bounded Curvature}
%





\maketitle

\begin{abstract}
  Consensus algorithms are popular distributed algorithms for computing aggregate quantities,
  such as averages, in ad-hoc wireless networks. However, existing algorithms mostly address the case where the measurements lie in a Euclidean space. In this work we propose Riemannian consensus, a natural extension of the existing averaging consensus algorithm to the case of Riemannian manifolds. Unlike previous generalizations, our algorithm is intrinsic and, in principle, can be applied to any complete Riemannian manifold.
We characterize our algorithm by giving sufficient convergence conditions on Riemannian manifolds with bounded curvature and we analyze the differences that rise with respect to the classical Euclidean case.
We test the proposed algorithms on synthetic data sampled from manifolds such as the space of rotations, the sphere and the Grassmann manifold.
\end{abstract}

\IEEEpeerreviewmaketitle


\section{INTRODUCTION}
%
%
%
\IEEEPARstart{C}{onsider} a set of low-power sensors, where each sensor can collect measurements from the surrounding environment and can communicate with a subset of neighboring nodes through wireless channels. We are interested in \emph{distributed algorithms} in which each node performs some local computation via communication with a few neighboring nodes and all the nodes collaborate to reach an agreement on the global quantity of interest (\eg the average of the measurements). Natural candidates for this scenario are 
 \emph{consensus algorithms}, where  each node maintains a local estimate of the global average, which is updated with the estimates from the local neighbors.

The interesting characteristic of consensus algorithms is that they converge exponentially fast under very mild communication assumptions, even in the case of a time-varying network topology.
However, traditional consensus algorithms have been mainly studied for the case where the measurements and the state of each node lie in Euclidean spaces. 

\myparagraph{Prior work} In the last few years, there has been an increasing interest in extending consensus algorithms to data lying on manifolds. This problem arises in a number of applications, including distributed pose estimation \cite{Tron:ICDSC08}, camera sensor network localization \cite{Tron:CDC09} and satellite attitude synchronization \cite{Sarlette:TAC10}. Early works consider specific manifolds such as the sphere \cite{Olfati:CDC06} or the $N$-torus \cite{Scardovi:SCL07}. However, these approaches are not easily generalizable to other manifolds. The work of \cite{Sarlette:SJOC2009} considers the problems of consensus and balancing on the more general class of compact homogeneous manifolds. However, the approach is \emph{extrinsic}, \ie it is based on specific embeddings of the manifolds in Euclidean space (where classical Euclidean consensus can be employed) and requires the ability to project the result of Euclidean consensus onto the manifold. Since the approach is extrinsic, convergence properties for both fixed and time-varying network topologies follow directly from existing results in the Euclidean case. A similar approach is taken in \cite{Hatanaka:CDC10}, where the extrinsic approach is extended to the case where the mean is time-varying. 

To the best of our knowledge, the work of \cite{Tron:ICDSC08} is the first one to propose an \emph{intrinsic} approach, which does not depend on specific embeddings of the manifold and does not require the definition of a projection operation. Instead, it relies only on the intrinsic properties of the manifold, such as geodesic distances and exponential and logarithm maps. However, \cite{Tron:ICDSC08} focuses only on a specific manifold ($SO(3)$) and does not provide a thorough convergence analysis. Other works on distributed algorithms for data lying in manifolds include \cite{Sarlette:TAC10,Igarashi:TraCST09}, which address the problem of coordination on Lie groups, and \cite{Tron:CDC09}, which addresses the problem of camera localization. However, these works do not apply to the case of general manifolds, as we consider in this paper.

\myparagraph{Paper contributions}
In this paper, we propose a natural extension of consensus algorithms to measurements lying in a Riemannian manifold for the case where the network topology is fixed. We define a cost function which is the natural equivalent to the one used to derive averaging consensus in the Euclidean case.
We then obtain our Riemannian consensus algorithm as an application of Riemannian gradient descent to this cost function. This requires only the ability to compute the exponential and logarithm maps for the manifold of interest. We derive sufficient conditions for the convergence of the proposed algorithms to a consensus configuration (i.e., where all the nodes converge to the same estimate). We also point out analogies and differences with respect to the Euclidean case. 

Our work has several important contributions with respect to the state of the art. 
First, our formulation is completely intrinsic, in the sense that it is not tied to a specific embedding of the manifold. 
Second, we consider more general (complete and not necessarily compact) Riemannian manifolds. 
Third, we provide sufficient conditions for the local and, in special cases, global convergence to the sub-manifold of consensus configurations. These conditions depend on the network connectivity, the geometric configuration of the measurements and the curvature of the manifold. We also provide stronger results that hold when additional assumptions on the manifold and network connectivity are made.
Finally, we show that, while Euclidean consensus converges to the Euclidean mean of the initial measurements, the Riemannian extension does not converge to the Fr\'echet mean, which is the Riemannian equivalent of the Euclidean mean.


\myparagraph{Paper outline}
In \S\ref{sc:background} we review Euclidean consensus and relevant notions from Riemannian geometry and optimization. In \S\ref{sc:consensus} we describe our extension of consensus algorithms to data in manifolds. Our main contributions are presented in \S\ref{sc:convergence} and \S\ref{sec:convergencePoint}. We first give convergence results for the case of general manifolds. We then strengthen our results for the particular case of manifolds with constant, non-negative curvature. In \S\ref{sc:experiments} we test the proposed algorithm on manifolds such as the special orthogonal group, the sphere and the Grassmann manifold. In the Appendix we report all the additional derivations and proofs that support the claims stated in the paper.

\section{MATHEMATICAL BACKGROUND}
\label{sc:background}

In this section, we review some basic concepts related to Euclidean consensus, Riemannian geometry and optimization that are relevant to our development in the rest of the paper.


\subsection{Review of Euclidean consensus}
\label{sc:Euclideanconsensus}

Consider a network with $N$ nodes. We represent the network as a connected, undirected graph $G = (V,E)$. The vertices $i \in V = \{1,\dots,N\}$
represent the nodes of the network while the edges $\{i,j\} \in E \subseteq V\times V$
represent the communication links between nodes $i$ and $j$.
The set of neighbors of node $i$ is denoted as $N_i =
\{ j \in V |\; \{i,j\} \in E\}$ and the number of neighbors or \emph{degree} of
node $i$ as $|N_i|$. The maximum degree of the
graph $G$ is denoted as $\Deg=\max_{i} \{\abs{N_i}\}$. 

Assume that each node measures a scalar quantity $u_i\in\real{}$, $i\in V$. The goal is to obtain a distributed algorithm to compute the
average of these measurements $\bar{u} = \frac{1}{N}\sum_{i=1}^N u_i$, which is a global quantity (in the sense that involves information from all the nodes).
The well-known average consensus algorithm, to which we will refer as Euclidean consensus,
computes the average $\bar u$ by iterating the difference equation 
\begin{equation}
  \label{eq:Euclidean-protocol}
  x_i(k+1) = x_i(k) + \varepsilon \sum_{j\in N_{i}} (x_j(k) - x_i(k)),
  \; x_i(0) = u_i, 
\end{equation}
where $x_{i}(k)$ is the state of node $i$ at iteration $l$ and 
$\varepsilon\leq\frac{1}{\Deg}$ 
is the step-size.
It is easy to verify that the mean of the states is preserved at each iteration, \ie
\begin{equation}
\frac{1}{N}\sum_{i=1}^N x_i(k) = \frac{1}{N}\sum_{i=1}^N x_i(k+1) = \bar{u}.
\label{eq:Euclidean-meanpreserving}
\end{equation}
It is also easy to see that \eqref{eq:Euclidean-protocol} is in fact a gradient
descent algorithm that minimizes the function
\begin{equation}
\label{eq:Euclidean-costfunction}
\varphi(\vct{x}) =\frac{1}{2}\sum_{\{i,j\} \in E} (x_i-x_j)^2.
\end{equation}
where $\vct{x}=(x_1,\ldots, x_N)$ denotes the vectors of all states in the network.
From now on, we will use bold letters to denote $N$-tuples in which each element belongs to \real{} or another manifold $\M$. 
The cost \eqref{eq:Euclidean-costfunction} is convex and its global minimuma are achieved
when the nodes reach a \emph{consensus configuration}, \ie when $x_i=y$ for all $i\in V$ and for any $y \in \real{}$. It can be shown that with the initial conditions stated in \eqref{eq:Euclidean-protocol} and when the graph $G$ is connected, we have $\lim_{k\rightarrow\infty} x_i(k) = \bar u$, for all $i\in V$ (see, \eg \cite{Olshevsky:SOC09}). That is, all the states converge to a unique global minimizer which corresponds to the average of the initial measurements. 

In addition, notice that the average consensus algorithm can be easily extended to multivariate data $\vct{u}_i\in\real{D}$ by applying the scalar algorithm to each coordinate of $\vct{u}_{i}$. It can also be extended to situations where the network topology changes over time \cite{Olfati:TAC04}.

\subsection{Review of concepts from Riemannian geometry}
\label{sec:reviewRiemanniangeometry}
In this section we present our notation for the Riemannian geometry concepts used throughout the paper. We refer the reader to \cite{Sakai:book96,Chavel:book06} and \cite{DoCarmo:riemannian92} for further details.

Let $(\M,\metric{}{})$ be a Riemannian manifold with metric $\metric{}{}$. The \emph{tangent space} of $\M$ at a point $x \in \M$ is denoted as $T_x\M$. 
Using the metric it is possible to define \emph{geodesic} curves, which are the generalization of straight lines in $\M$.
For the remainder of the paper, we will always assume that $\M$ is \emph{geodesically complete}, \ie there always exists a minimal length geodesic between any two points in $x,y \in \M$. The length of this geodesic is said to be the \emph{distance} between the two points, and is denoted as $d(x,y)$. Most of the manifolds of practical interest are complete.

Let $v$ be a unit-length tangent vector in $T_x\M$, \ie $\|v\| = \metric{v}{v}^{\frac{1}{2}}=1$. We can then define the \emph{exponential map} $\exp_x: T_x\M\rightarrow \M$, which maps each tangent vector $tv \in T_x\M$ to the point in $\gamma(t) \in \M$ obtained by following the geodesic $\gamma(t)$ passing through $x$ with direction $v$ for a distance $t$.
Let $\tilde{\calI}_x\subset T_x\M$ be the maximal open set on which $\exp_x$ is a diffeomorphism and define the \emph{interior set} \cite[p.216]{Sakai:book96} as $\calI_x=\exp_x\tilde{\calI}_x$.
The exponential map is invertible on $\calI_x$ and we can define the \emph{logarithm map} $\log_x: \calI_x \to T_x\M$ as $\log_x=\exp_x^{-1}$. 
We denote an \emph{open geodesic ball} \cite[p. 70]{DoCarmo:riemannian92} of radius $r>0$ centered at $x\in\M$ as $\calB _\M (x,r)\subset\M$.
We also denote as $\inj_x\M$  the \emph{injectivity radius} of $\M$ at $x\in \M$, \ie the radius of the maximal geodesic ball centered at $x$ entirely contained in $\calI_x$ and as $\inj \M$ the infimum of $\inj_x\M$ over all points in $\M$.

Given a smooth function $f:\M \to \real{}$, and a tangent vector $v \in T_x\M$, one can define the \emph{directional derivative} of $f$ in the direction $v$ at $x$ as $\left.\frac{\de}{\de t}f(\gamma(t))\right|_{t=0}$, where $\gamma(t)$ is any curve such that $\gamma(0)=x$ and $\dot\gamma(0)=v$. The \emph{gradient} of $f$ on $(\M,\langle,\rangle)$ at $x \in \M$ is defined as the unique tangent vector $\grad_x f(x) \in T_x\M$ such that, for all $v\in T_x\M$, 
\begin{equation}
 \langle\grad_x f(x), v\rangle_x=\left.\frac{\de}{\de t}f(\gamma(t))\right|_{t=0}.
\end{equation}
Intuitively, as in the Euclidean case, the gradient indicates the direction along which $f$ increases the most. A point $x \in \M$ is called a \emph{critical point} \cite{Absil:book08} of $f$ if either $\grad_x f(x)=0$, \ie it is a \emph{stationary point}, or the gradient does not exist.
In this paper, we will mainly need the gradient of the squared distance function, which is given by:
\begin{equation}
\frac{1}{2}\grad_x d^2(x,y)=-\log_x(y).
\end{equation}

Given a point $x\in \M$, we denote the sectional curvature of $\sigma$, a two-dimensional subspace in $T_x\M$, as $K_\sigma(x)$.
From now on we will assume that the sectional curvature of the manifold $\M$ is bounded above by $\Delta$ and below by $\delta$. In other words, $\delta\leq K_\sigma(x) \leq \Delta$ for any point $x\in\M$ and any two-dimensional subspace $\sigma \subset T_x\M$. If $\delta=\Delta=\kcurv$, then $\M$ is said to be of constant curvature $\kcurv$. Related to the curvature and injectivity radius, we define $r^\ast>0$ as 
\begin{equation}
r^\ast=\frac{1}{2}\min\bigl\{\inj \M, \frac{\pi}{\sqrt{\Delta}}\bigr\},
\label{eq:rstar}
\end{equation}
where we use the convention that, if $\Delta\leq0$, $\frac{1}{\sqrt{\Delta}}=+\infty$. Note that any ball with radius $r\leq r^\ast$ is guaranteed to be convex \cite{DoCarmo:riemannian92}.
In addition, for the sake of brevity, we define the functions 
\begin{align}
 S_\kcurv(t)=\begin{cases} \frac{\sin(\sqrt{\kcurv}t)}{\sqrt{\kcurv}}  & \kcurv>0 \\ t & \kcurv=0 \\ \frac{ \sinh(\sqrt{\abs{\kcurv}}t)}{\sqrt{\abs{\kcurv}}} & \kcurv<0  \end{cases},\qquad
 C_\kcurv(t)=\begin{cases} \cos(\sqrt{\kcurv}t) & \kcurv>0 \\ 1 & \kcurv=0 \\ \cosh(\sqrt{\abs{\kcurv}}t) & \kcurv<0\end{cases}.
\label{eq:SkCk}
\end{align}

In the following, we will make also use of the product manifold $\M^N=\M\times \ldots \times \M$, which is the $N$-fold cartesian product of $\M$ with itself. We will use the notation $\vct{x}=(x_1,\ldots,x_N)$ to indicate a point in $\M^N$ and $\vct{v}=(v_1,\ldots,v_N) \in T_x\M$ to indicate a tangent vector. We will use the natural metric $\metric{\vct{v}}{\vct{w}}=\sum_{i=1}^N \metric{v_i}{w_i}$. As a consequence, geodesics, exponential maps, and gradients can be easily obtained by using the respective definitions on each copy of $\M$ in $\M^N$. This notation will be used when stating results that involve the states of all the nodes.

\subsection{Examples of manifolds}
We will use the following manifolds as examples throughout the paper.

\myparagraph{Euclidean space} The usual Euclidean space $\real{n}$ can be interpreted as the simplest Riemannian manifold, where the tangent space of a point is a copy of $\real{n}\!$, the metric is the usual inner product, and geodesics are straight lines. It has constant curvature $\delta=\Delta=0$ and injectivity radius $+\infty$.

\myparagraph{The orthogonal and special orthogonal groups} The $n$-dimensional orthogonal group is defined as $O(n)=\{R\in \real{n\times n}: R\transpose R = I\}$. This is the group of orthogonal $n\times n$ matrices. This group has two connected components. One of them is the \emph{special orthogonal group} $SO(n)$, which has the additional property $\det(R)=1$, and essentially describes all possible rotations in the $n$-dimensional Euclidean space. The Lie algebra for the group is $\mathfrak{so}(n)$, the space of $n\times n$ skew-symmetric matrices. The Riemannian metric at the identity is given by $\langle v_1, v_2 \rangle = \frac{1}{2}\trace(v_1\transpose v_2)$, $v_1,v_2 \in \mathfrak{so}(n)$. In this metric, the curvature bounds are $\Delta = \frac{1}{2}$, and $\delta=0$, except when $n=3$, for which the curvature is constant $\delta=\Delta=\frac{1}{4}$.  Also, the injectivity radius is $\pi$ and $r^\ast=\frac{\pi}{2}$.

\myparagraph{The Grassmann manifold} The $(n,p)$ Grassmann manifold $\Grassmann(n,p)$ is the space of $p$-dimensional subspaces in $\real{n}$. It can also be viewed as a quotient space $O(n)/\bigl(O(p)\times O(n-p)\bigr)$, which provides a Riemannian structure for it through immersion in $O(n)$ \cite{Edelman98}. The curvature bounds are $\Delta = 2$, and $\delta=0$. The injectivity radius is $\frac{\pi}{2}$ and $r^\ast=\frac{\pi}{4}$.

\myparagraph{The sphere} The $n$-dimensional sphere is defined as $\Sphere{n}=\{Y \in \real{n+1}:Y\transpose Y=1\}$. The tangent space at a point $Y$ is defined as $T_Y\Sphere{n}=\{Z\in\real{n+1}:Z\transpose Y=0\}$. As metric, we use the standard inner product between vectors in $\real{n+1}$. The geodesics follow great circles and the curvature is constant $\delta=\Delta=1$.

More details about these manifolds and about the computation of the $\exp$ and $\log$ maps can be found in \cite{Edelman98} and \cite{Tron:TR11}.

\subsection{Review of Riemannian gradient descent}
\label{sc:graddescent}
Let $\varphi: \M \to \real{}$ be a smooth function defined on a Riemannian manifold $\M$. Given an initial point $x_0 \in \M$, it is possible to define a (steepest) gradient descent algorithm on Riemannian manifolds, as shown by Algorithm \ref{alg:graddescent}.

\begin{algorithm}
\textbf{Input:} An initial element $x_0 \in \M$ 
\begin{enumerate}
\item \textbf{Initialize} $x(0)=x_0$
\item \textbf{For} $l\in\naturals$, \textbf{repeat}
  \begin{enumerate}
  \item $w=-\grad_{x}\varphi(x(l))$
  \item $x(l+1)=\exp_{x(l)}(\varepsilon(l) w)$
 \end{enumerate}
\end{enumerate}
\caption{A Riemannian steepest gradient descent algorithm}
\label{alg:graddescent}
\end{algorithm}

In practice, at each iteration $l$, the algorithm moves from the current estimate $x(l)$ to a new estimate $x(l+1)$ along the geodesic in the opposite direction of the gradient with a step size $\varepsilon(l)$. It can be shown that, under some conditions on the sequence of step sizes $\{\varepsilon(l)\}$, we have $\lim_{l\to\infty}\grad_x\varphi(x(l))=0$ \cite{Udriste:book94,Absil:book08}. In addition, if all the iterates $\{x(l)\}$ stay in a compact set $\X\subset \M$, then the sequence $\{x(l)\}$ will converge to a critical point of $\varphi$.

Algorithm \ref{alg:graddescent} gives only a basic version of a gradient-based descent algorithm on Riemannian manifolds. Many variations are possible, \eg in the computation of the descent direction and of the step size, in the curve used to search for $x(l+1)$ (which does not need to be a geodesic) or in the stopping criterion. We refer to \cite{Absil:book08,Edelman98} for some examples of such variations.

\myparagraph{Choice of a fixed step size} Ideally, one could compute the step size $\varepsilon(l)$ at each iteration by employing methods based on a line search. However, it might be more efficient or necessary to employ a pre-determined fixed step size, which is maintained constant throughout all the iterations, \ie $\varepsilon(l)\equiv\varepsilon$. This happens, for instance, when the evaluation of the cost function is computationally expensive, as it is the case for the distributed optimization problems that we will encounter in the rest of the paper.
It is well known that the choice of $\varepsilon$ affects the convergence of Algorithm~\ref{alg:graddescent}. For small step sizes, the algorithm will exhibit a slow convergence. On the other hand, if the step size is too large, the algorithm might fail to converge at all. In this section we review and extend results on methods to choose a fixed step size for Algorithm \ref{alg:graddescent} which depends only on the characteristics of the cost functions and, possibly, on the initial point $x_0$.

We will say that $\varepsilon$ is an \emph{admissible} step size if it implies that $\varphi(x(l+1))<\varphi(x(l))$ for all $l\geq 0$. We will relate admissible step sizes to bounds on the maximum eigenvalue of the Hessian of the function, or, equivalently, on the second derivative of the function evaluated along geodesics. The results will be instrumental in the proofs in \S\ref{sc:consensumanifold}. The ideas in this section are fairly standard for the case when $\M=\real{n}$ (see, for instance, \cite[p. 466]{BoydVandenberghe04}), but here we review the general case where $\M$ is a manifold (see also\cite{Udriste:book94}).

We start by defining a bound for the Hessian of the cost function, $\hess\varphi(x)$ \cite[p. 142]{DoCarmo:riemannian92}.

\begin{definition}\label{def:hessianRiemannian}
Given a twice differentiable function $\varphi(x)$ defined on an open subset $\X$ of a manifold $\M$, we say that the Hessian $\hess\varphi(x)$  is uniformly bounded on $\X$ if there exists a finite, non-negative constant $\mu_{max}$ such that, for any ${x}_0 \in \X$ and any ${v}\in T_{{x}_0}\M$, the second derivative of $\varphi$ along $\gamma_{x_0}(t)=\exp_{{x}_0}\bigl(tv\bigr)$ satisfies: 
\begin{equation}
\left.\frac{\de^2}{\de t^2}\varphi(\gamma_{x_0}(t))\right|_{t=0}=\metric{{v}}{\hess\varphi({x}_0){v}}
 \leq \mu_{max} \norm{{v}}^2.
\end{equation}
\end{definition}

Then, we state the following Lemma. 
\begin{lemma}\label{lm:quadraticbound}
  Let $\tilde{\varphi}(t)$, be a twice differentiable function defined on $\tilde{\X}\subseteq \real{}$ satisfying $\ddot{\tilde{\varphi}}(t)\leq \tilde{\mu}_{max}$ for all $\varepsilon\in\tilde{\X}$ and some $\tilde{\mu}_{max}\in\real{}$. Then the we have the bound $\tilde{\varphi}(t)\leq \tilde{\varphi}(0) + \dot{\tilde{\varphi}}(0)t +\frac{1}{2}\tilde{\mu}_{max} t^2$ for all $t \in \tilde{\X}$.
\end{lemma}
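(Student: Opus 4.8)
The plan is to reduce this to an elementary one-variable monotonicity argument by comparing $\tilde{\varphi}$ directly with the quadratic majorant on the right-hand side. Assume, as is implicit in the statement, that $\tilde{\X}$ is an interval containing $0$ (so that $\dot{\tilde{\varphi}}(0)$ is meaningful and the segment between $0$ and any $t\in\tilde{\X}$ is contained in $\tilde{\X}$). I would introduce the auxiliary function
\[
 g(t) = \tilde{\varphi}(0) + \dot{\tilde{\varphi}}(0)\,t + \tfrac12\,\tilde{\mu}_{max}\,t^2 - \tilde{\varphi}(t),
\]
which is twice differentiable on $\tilde{\X}$, and the goal becomes precisely to show $g(t)\geq 0$ for all $t\in\tilde{\X}$.

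Next I would record the low-order data of $g$: clearly $g(0)=0$, $\dot{g}(0)=\dot{\tilde{\varphi}}(0)-\dot{\tilde{\varphi}}(0)=0$, and $\ddot{g}(t)=\tilde{\mu}_{max}-\ddot{\tilde{\varphi}}(t)\geq 0$ for every $t\in\tilde{\X}$ by the hypothesis $\ddot{\tilde{\varphi}}(t)\leq\tilde{\mu}_{max}$. Hence $\dot{g}$ is nondecreasing on $\tilde{\X}$. I would then split on the sign of $t$: if $t\geq 0$, then $\dot{g}(s)\geq\dot{g}(0)=0$ for all $s\in[0,t]$, so $g$ is nondecreasing on $[0,t]$ and $g(t)\geq g(0)=0$; if $t\leq 0$, then $\dot{g}(s)\leq\dot{g}(0)=0$ for all $s\in[t,0]$, so $g$ is nonincreasing on $[t,0]$ and again $g(t)\geq g(0)=0$. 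In either case $g(t)\geq 0$, which is exactly the claimed bound $\tilde{\varphi}(t)\leq\tilde{\varphi}(0)+\dot{\tilde{\varphi}}(0)t+\tfrac12\tilde{\mu}_{max}t^2$. (Equivalently one may invoke Taylor's theorem with Lagrange remainder, $\tilde{\varphi}(t)=\tilde{\varphi}(0)+\dot{\tilde{\varphi}}(0)t+\tfrac12\ddot{\tilde{\varphi}}(\xi)t^2$ for some $\xi$ strictly between $0$ and $t$, and then bound $\ddot{\tilde{\varphi}}(\xi)\leq\tilde{\mu}_{max}$; the auxiliary-function argument above is just a self-contained derivation of the instance of Taylor's theorem that is needed.)

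There is essentially no obstacle in this lemma — it is a standard calculus fact — but the one point that deserves a little care is that the hypothesis controls $\ddot{\tilde{\varphi}}$ only from above, so the correct bookkeeping is the sign-of-$t$ case distinction above rather than any symmetric estimate on $\lvert\dot{g}\rvert$; the other tacit point is that the domain must be an interval so that the monotonicity of $\dot{g}$ can be integrated from $0$ to $t$.
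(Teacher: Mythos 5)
Your proof is correct. The paper in fact gives no proof of this lemma (it is explicitly ``left as an exercise to the reader''), and your argument---the auxiliary function $g$ with $g(0)=\dot g(0)=0$ and $\ddot g\geq 0$, handled separately for $t\geq 0$ and $t\leq 0$, or equivalently Taylor's theorem with Lagrange remainder---is exactly the standard calculus argument the authors intend. Your remarks on the tacit hypotheses (that $\tilde{\X}$ must be an interval containing $0$, and that the one-sided bound on $\ddot{\tilde{\varphi}}$ forces the sign-of-$t$ case split rather than a symmetric estimate) are accurate and even clean up a small sloppiness in the statement, which writes ``for all $\varepsilon\in\tilde{\X}$'' where $t\in\tilde{\X}$ is meant.
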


This Lemma can be applied to functions obtained by evaluating $\varphi$ along geodesics.

\begin{thm}\label{thm:boundsRiemannian}
 Let $\mu_{max}$ be a uniform bound on the Hessian $\hess(\varphi)$ as in Definition \ref{def:hessianRiemannian}. Assume $\gamma_{x_0}(t)=\exp_{{x}_0}\bigl(-t\grad_{{x}}\varphi(x_0)\bigr) \in \X$ for all $t\in (0,2 \mu_{max}^{-1})$ and let $\tilde{\varphi}(t)=\varphi \bigl (\gamma_{x_0}(t)\bigr)$. Then $\tilde{\varphi}(t)\leq\tilde{\varphi}(0)$ 
for $t \in (0,2 \mu_{max}^{-1})$, with equality if and only if ${x}_0 \in \X$ is a stationary point of $\varphi$. 
\end{thm}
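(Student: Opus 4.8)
The plan is to reduce the claim to a one–variable statement about $\tilde{\varphi}(t)=\varphi(\gamma_{x_0}(t))$ and then invoke Lemma~\ref{lm:quadraticbound}. Write $g=\norm{\grad_x\varphi(x_0)}$ for brevity. First I would record the derivative at the origin: since $\gamma_{x_0}(0)=x_0$ and $\dot\gamma_{x_0}(0)=-\grad_x\varphi(x_0)$, the defining property of the Riemannian gradient gives
\begin{equation*}
\dot{\tilde{\varphi}}(0)=\metric{\grad_x\varphi(x_0)}{-\grad_x\varphi(x_0)}=-g^2 .
\end{equation*}
If $g=0$, then $x_0$ is stationary, $\gamma_{x_0}$ is the constant curve, $\tilde{\varphi}\equiv\tilde{\varphi}(0)$, and the theorem holds trivially with equality; so for the main argument take $g>0$.

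The key step is a uniform bound on $\ddot{\tilde{\varphi}}$ over the interval $[0,2\mu_{max}^{-1})$. Fix $s$ in this interval, set $y=\gamma_{x_0}(s)$ (which lies in $\X$ by the standing hypothesis, taking $x_0\in\X$ for $s=0$) and $w=\dot\gamma_{x_0}(s)\in T_y\M$. Because a geodesic is uniquely determined by its initial position and velocity, $\gamma_{x_0}(s+t)=\exp_y(tw)$, so by the identity in Definition~\ref{def:hessianRiemannian} applied at $y\in\X$,
\begin{equation*}
\ddot{\tilde{\varphi}}(s)=\left.\frac{\de^2}{\de t^2}\varphi\bigl(\exp_y(tw)\bigr)\right|_{t=0}=\metric{w}{\hess\varphi(y)\,w}\leq\mu_{max}\norm{w}^2 .
\end{equation*}
Since geodesics have constant speed, $\norm{w}=\norm{\dot\gamma_{x_0}(0)}=g$, hence $\ddot{\tilde{\varphi}}(s)\leq\mu_{max}g^2$ on the whole interval. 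This is the only point requiring genuine care: it is precisely the reparametrization property of geodesics, together with the assumption that the entire segment $\gamma_{x_0}([0,2\mu_{max}^{-1}))$ stays in $\X$, that lets Definition~\ref{def:hessianRiemannian} be invoked at every point of the segment and not merely at $x_0$.

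Finally I would apply Lemma~\ref{lm:quadraticbound} to $\tilde{\varphi}$ on $\tilde{\X}=[0,2\mu_{max}^{-1})$ with $\tilde{\mu}_{max}=\mu_{max}g^2$, which gives, for $t\in(0,2\mu_{max}^{-1})$,
\begin{equation*}
\tilde{\varphi}(t)\leq\tilde{\varphi}(0)+\dot{\tilde{\varphi}}(0)\,t+\tfrac{1}{2}\mu_{max}g^2 t^2=\tilde{\varphi}(0)+g^2 t\Bigl(\tfrac{1}{2}\mu_{max}t-1\Bigr).
\end{equation*}
Throughout $0<t<2\mu_{max}^{-1}$ the factor $\tfrac{1}{2}\mu_{max}t-1$ is strictly negative (this includes $\mu_{max}=0$, where $2\mu_{max}^{-1}=+\infty$ and the factor equals $-1$), so the right-hand side is $\leq\tilde{\varphi}(0)$, which is the asserted inequality. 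For the equality clause: the case of a stationary $x_0$ was handled above; conversely, if $\tilde{\varphi}(t)=\tilde{\varphi}(0)$ for some such $t$, the displayed bound forces $g^2 t\bigl(\tfrac{1}{2}\mu_{max}t-1\bigr)\geq 0$, and since $t>0$ and $\tfrac{1}{2}\mu_{max}t-1<0$ this is possible only if $g=0$, i.e. $x_0$ is a stationary point of $\varphi$. Beyond the uniform Hessian bound flagged above, I do not foresee any difficulty.
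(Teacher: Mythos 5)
Your proof is correct and follows exactly the route the paper intends: the paper leaves this theorem as an exercise, but your argument (second derivative along the geodesic bounded by $\mu_{max}\norm{\grad_x\varphi(x_0)}^2$ via constant speed, then Lemma~\ref{lm:quadraticbound}, then the sign of $g^2 t(\tfrac{1}{2}\mu_{max}t-1)$) is precisely the computation the authors later reuse in the proof of Theorem~\ref{thm:critialconvergence}. Your handling of the $g=0$ and $\mu_{max}=0$ cases and the reparametrization step justifying the Hessian bound at every point of the segment are exactly the details the exercise asks for.
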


The proofs of Lemma~\ref{lm:quadraticbound} and Theorem~\ref{thm:boundsRiemannian} are left as an exercise to the reader.

In the context of Algorithm~\ref{alg:graddescent}, Theorem~\ref{thm:boundsRiemannian} implies that, as long as $\varepsilon \in (0,2 \mu_{max}^{-1})$ and $x(l+1)=\gamma_{x(l)}(\varepsilon)\in\X$, the cost function is reduced at every iteration. 
However, we stress here the fact that neither Theorem~\ref{thm:boundsRiemannian}, nor Algorithm \ref{alg:graddescent}, imply that each new iterate $x(l+1)$ will belong to $\X$ when $x(l)\in\X$. 
Therefore, additional considerations are needed in order to derive complete results for the convergence of Algorithm~\ref{alg:graddescent} to a stationary point (see \S\ref{sec:convergenceset} and \S\ref{sec:convergencePoint}).

\subsection{Fr\'echet mean}
\label{sc:globalFrechet}
In order to compare the consensus algorithm that we will propose to Euclidean consensus, we will need to generalize the concept of empirical mean to data lying in Riemannian manifolds. Let $\{u_i\}_{i=1}^N$ be a set of points in a Riemannian manifold $\M$. Similarly to the geometric definition of empirical mean in the Euclidean case, we will define the \emph{Fr\'echet mean} $\bar{u}$ of the set of points as the global minimizer of the sum of squared geodesic distances, \ie
\begin{equation}
\bar{u}=\argmin_{u\in\M}\sum_{i=1}^N d^2(u_i,u).
\end{equation}
If the points lie in a ball of radius smaller than $r^\ast$, the global minimizer is unique and belongs to the same ball \cite{Afsari:ProcAMS11}. Moreover, for spaces of constant curvature the Fr\'echet mean belongs to the closed convex hull of the measurements (see \cite{Afsari:ProcAMS11} and also \S\ref{sec:convergencePoint}). 

Note that Algorithm \ref{alg:graddescent} can be used for the computation of the Fr\'echet mean $\bar{u}$. In this case, the negative gradient is $w = \frac{1}{N} \sum_{i=1}^N \log_{\bar u}(u_i)$, which is essentially a mean in $T_{\bar u}\M$. 

The conditions for the convergence to $\bar{u}$ (as opposed to other critical points) are, for the general case, only partially known \cite{Afsari:11}. These conditions depend on the spread of the points $\{u_i\}$, the step size $\varepsilon$ and the initialization $x_0$ of the algorithm.

\section{RIEMANNIAN CONSENSUS}
\label{sc:consensus}
\label{sc:consensumanifold}
In this section we present our proposed algorithm, which we call Riemannian consensus. This algorithm can be considered as a direct extension of the Euclidean consensus to the Riemannian case. The basic idea is to use the formulation of consensus as an optimization problem and define a potential function equivalent to the cost in \eqref{eq:Euclidean-costfunction} on the Riemannian manifold of interest. Riemannian gradient descent is then applied to obtain the update rules for each node.

Following the notation introduced in \S\ref{sc:Euclideanconsensus}, let us denote the measurement and the state at node $i$ as $u_i\in\M$ and $x_i \in \M$, respectively. By a straightforward generalization of the Euclidean case in \eqref{eq:Euclidean-costfunction}, we define the potential function $\varphi$ as 
\begin{equation}
\label{eq:Riemann-cost}
\varphi(\vct{x}) = \frac{1}{2}\sum_{\{i,j\}\in E} d^2(x_i,x_j). 
\end{equation}

Notice that the gradient of $\varphi$  with respect to the $i$-th element can be explicitly calculated as
\begin{align}
\grad_{x_i}\varphi \!=\!
\frac{1}{2}\!\grad_{x_i}\sum_{j \in N_i} d^2(x_i,x_j) \!=\!
- \!\sum_{j \in N_i} \log_{x_i}(x_j),
\label{eq:gradRiemann-cost}
\end{align}
where we used the facts that the graph is undirected, $d(\cdot,\cdot)$ is symmetric and $d(x_i,x_i)=0$.

Algorithm~\ref{alg:consensus-manifolds} is our first proposed consensus protocol on $\M$ and is obtained by applying Riemannian gradient descent algorithm on the cost $\varphi$.
\begin{algorithm}
\textbf{Input:} The measurements $u_i$ at each node $i\in\{1,\ldots,N\}$
  \begin{enumerate}
  \item \textbf{For} each node $i\in\{1,\ldots,N\}$ \textbf{in parallel}
      \begin{enumerate}
      \item \textbf{Initialize} the state with the local measurement, $x_i(0)=u_i$
      \item \textbf{For} $l\in\naturals$, \textbf{repeat}
        \begin{enumerate}
        \item Compute the update
          \begin{equation}
            x_{i}(l+1)=\exp_{x_i(l)}\Bigl(-\varepsilon\grad_{x_i}\varphi\bigl(x_i(l)\bigr)\Bigr)
            \label{eq:Riemann-protocol}
          \end{equation}
      \end{enumerate}
   \end{enumerate}
\end{enumerate}
\caption{Riemannian consensus}
\label{alg:consensus-manifolds}
\end{algorithm}

As mentioned before, this protocol is a natural extension of the Euclidean case. In fact, when $\M=\real{}$ with the standard metric, the updates \eqref{eq:Riemann-protocol} reduce to the standard Euclidean updates \eqref{eq:Euclidean-protocol}, by substituting exponential and logarithm maps with conventional sums and differences. However, in general, the two consensus algorithm present very different convergence properties.
On the one hand, the convergence analysis for Euclidean consensus is simple: the cost \eqref{eq:Euclidean-costfunction} is a simple quadratic function, and simple tools from optimization theory and linear algebra are sufficient. On the other hand, carrying out a similar analysis for Riemannian consensus is not trivial: the cost \eqref{eq:gradRiemann-cost} is not a simple quadratic function and we need to take into account the Riemannian geometry of the manifold. The next two sections are devoted to present our contributions to the convergence analysis of Riemannian consensus.

\section{CONVERGENCE TO THE CONSENSUS SUB-MANIFOLD}
\label{sc:convergence}

\label{sec:convergence-consensus-manifold}
In this section we analyze the convergence properties of the Riemannian consensus algorithm. 
We divide our treatment in three parts:
\begin{enumerate}
\item We notice that the cost can have multiple local minima and we define a non-zero measure subset $\Sset \subset \M^N$ that contains all \emph{global} minimizers but no other critical point (\S\ref{sec:globalminima}).
\item We give a distributed method to choose a step-size $\varepsilon$ for which the algorithm is guaranteed to reduce the cost at each iteration (\S\ref{sc:choiceepsilon}).
\item We derive sufficient conditions under which the algorithm is guaranteed to converge to the set of global minimizers, \ie to the set of consensus configurations (\S\ref{sec:convergenceset}).
\end{enumerate}
We first obtain results for general manifolds and for general network topologies. In particular we  show \emph{local} convergence to the manifold of consensus configurations, which we  refer to as \emph{consensus sub-manifold}. With additional assumptions, we  also give results on \emph{global} convergence to the same set (\S\ref{sec:convergenceGlobal}) and \emph{local} convergence to a \emph{single point} (\S\ref{sec:convergencePoint}).


\subsection{Global minimizers of the cost function $\varphi$} 
\label{sec:globalminima}
We first show that the global minimizers of $\varphi$ corresponds to consensus configurations. Let us define the consensus sub-manifold $\calD$ as the diagonal space of $\M^N$, \ie
\begin{equation}
 \calD=\left\{(y,\ldots,y) \in \M^N : y \in \M\right\}.
\end{equation}
This set represents the manifold of all possible consensus configurations of the network, where all the nodes agree on a state. The following proposition shows that the consensus sub-manifold is exactly the set of global minimizers of $\varphi$.

\smallskip
\begin{proposition}
If $G$ is connected, then $\vct{x} \in \calD$ if and only if $\vct{x}$ is a global minimizer of $\varphi$.
\label{lm:globalmincost}
\end{proposition}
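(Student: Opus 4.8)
The plan is to show both inclusions. First, the easy direction: if $\vct{x} = (y,\dots,y) \in \calD$, then $d(x_i,x_j) = d(y,y) = 0$ for every edge $\{i,j\} \in E$, so $\varphi(\vct{x}) = 0$. Since $\varphi$ is a sum of squared distances, it is nonnegative everywhere, so any point attaining the value $0$ is automatically a global minimizer. Hence $\calD$ consists of global minimizers (all with cost $0$).

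For the converse, suppose $\vct{x} = (x_1,\dots,x_N)$ is a global minimizer. By the previous paragraph, $\min \varphi = 0$, so $\varphi(\vct{x}) = \frac12\sum_{\{i,j\}\in E} d^2(x_i,x_j) = 0$. Each summand is nonnegative, so every term must vanish: $d(x_i,x_j) = 0$, i.e. $x_i = x_j$, for every edge $\{i,j\}\in E$. Now I invoke connectedness of $G$: given any two vertices $i,j \in V$, there is a path $i = v_0, v_1, \dots, v_m = j$ with each $\{v_{k},v_{k+1}\} \in E$, and along this path $x_{v_0} = x_{v_1} = \dots = x_{v_m}$, so $x_i = x_j$. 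Thus all components of $\vct{x}$ are equal to a common value $y \in \M$, i.e. $\vct{x} \in \calD$.

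I expect no real obstacle here — the argument is elementary — but the one point to state carefully is \emph{why} $\min\varphi = 0$: it follows simply because $\varphi \geq 0$ (sum of squares) and $\varphi$ attains $0$ on the nonempty set $\calD$. The role of the connectedness hypothesis is exactly the propagation of the pairwise equalities $x_i = x_j$ (for edges) to all pairs of vertices; without it, the minimizers would instead be the configurations constant on each connected component. Note also that the proposition as stated is about global minimizers, so no Hessian or curvature considerations enter — those become relevant only later when distinguishing global minimizers from other critical points.
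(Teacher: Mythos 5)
Your proof is correct and follows essentially the same argument as the paper: nonnegativity of $\varphi$ plus vanishing on $\calD$ gives one direction, and $\varphi(\vct{x})=0$ forces $d(x_i,x_j)=0$ on every edge, with connectedness of $G$ propagating the equality to all nodes. Your explicit path argument and the remark that $\min\varphi=0$ only make the paper's reasoning slightly more detailed, not different.
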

\smallskip
\begin{proof}
Note that each term of $\varphi$ in \eqref{eq:Riemann-cost} is non-negative, hence $\varphi(\vct{x}) \geq 0$. Also, if $\vct{x}\in\calD$, then $\varphi(\vct{x})=0$. Thus, $\vct{x}$ is a global minimizer. Conversely, notice that $\varphi(\vct{x})=0$ implies that for each pair $\{i,j\} \in E$, we have $d^2(x_i,x_j)=0$.  By definition, $d(x_i,x_j)=0$ if and only if the points are equal, \ie $x_i=x_j$. Since $G$ is connected, $\varphi$ achieves its global minimum $\varphi(\vct{x})=0$, if and only if $x_i=x_j=y$ for any $i$ and $j$.
\end{proof}

We now define the set $\Sset \subset\M^N$ as
\begin{equation}
 \Sset=\{(x_1,\ldots,x_N) \in \M^N:
 \exists y \in \M \textrm{ for which } \max_{i\in V} d(x_i,y)<r^\ast\}.
 \label{eq:setdef}
\end{equation}
Intuitively, $\Sset$ is a tube in $\M^N$ centered around the diagonal space $\calD$ and having a ``square'' section (see Fig.~\ref{fig:constructgammaij}). Note that $\vct{x}\in\Sset$ is equivalent to saying that there exists a $y\in\M$ such that, for all $i\in\{1,\ldots,N\}$, $x_i\in\calB_\M(y,r^\ast)$. A sufficient condition for the uniqueness of the Fr\'echet mean is that $\vct{u}=(u_1,\ldots,u_n) \in \Sset$ \cite{Afsari:ProcAMS11}.

We have then the following result, which represents our first contribution.

\begin{thm}\label{th:Riemann-globalminset}
A point $\vct{x} \in \Sset$ is a critical point for $\varphi$ if and only if $\vct{x} \in \calD$. In other words, the set $\Sset$ contains all the global minima and no other critical points of $\varphi$.
\end{thm}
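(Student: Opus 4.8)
The plan is to prove the two implications separately, the forward one being trivial. If $\vct{x}\in\calD$, then by Proposition~\ref{lm:globalmincost} it is a global minimizer of $\varphi$; writing $\vct{x}=(y,\dots,y)$, formula~\eqref{eq:gradRiemann-cost} gives $\grad_{x_i}\varphi=-\sum_{j\in N_i}\log_y(y)=0$ for every $i$, so $\vct{x}$ is a stationary point, hence a critical point. More generally, I would first record that on the whole set $\Sset$ the gradient of $\varphi$ exists, so that ``critical point'' is synonymous with ``stationary point'' there: if $\vct{x}\in\Sset$ with witness $y^\ast$ (i.e. $d(x_i,y^\ast)<r^\ast$ for all $i$), then for every edge $\{i,j\}$ we get $d(x_i,x_j)<2r^\ast\le\inj\M$, hence $x_j\in\calI_{x_i}$, $\log_{x_i}(x_j)$ is well defined and smooth, and \eqref{eq:gradRiemann-cost} is valid. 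Thus a critical $\vct{x}\in\Sset$ satisfies $\sum_{j\in N_i}\log_{x_i}(x_j)=0$ for all $i$.

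For the converse, fix such a critical $\vct{x}\in\Sset$ and a witness $y^\ast$, and set $R=\max_i d(x_i,y^\ast)<r^\ast$, so all the $x_i$ lie in the open geodesic ball $\calB_\M(y^\ast,r^\ast)$, which is convex; any minimal geodesic joining two of the $x_i$ therefore stays inside this ball. The key geometric input I would invoke is that the function $h(z)=d^2(z,y^\ast)$ is \emph{strictly} convex along geodesics on $\calB_\M(y^\ast,r^\ast)$ — this is precisely where the upper curvature bound $K_\sigma\le\Delta$ and the definition~\eqref{eq:rstar} of $r^\ast$ (giving $r^\ast\le\tfrac{\pi}{2\sqrt{\Delta}}$ and $r^\ast\le\tfrac12\inj\M$) are used, via the standard Hessian comparison estimate for the squared distance.

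Then I would run a maximum-principle argument. Pick $i_0$ with $d(x_{i_0},y^\ast)=R$. For $j\in N_{i_0}$ let $\gamma\colon[0,1]\to\M$ be the minimal geodesic from $x_{i_0}$ to $x_j$ and put $\phi(t)=h(\gamma(t))$; since $\dot\gamma(0)=\log_{x_{i_0}}(x_j)$ and $\tfrac12\grad_z d^2(z,y^\ast)=-\log_z(y^\ast)$, we have $\phi'(0)=-2\metric{\log_{x_{i_0}}(y^\ast)}{\log_{x_{i_0}}(x_j)}$. As $\phi$ is convex with $\phi(0)=R^2\ge d^2(x_j,y^\ast)=\phi(1)$, convexity forces $\phi'(0)\le\phi(1)-\phi(0)\le0$, i.e. $\metric{\log_{x_{i_0}}(x_j)}{\log_{x_{i_0}}(y^\ast)}\ge0$. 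Pairing the stationarity relation $\sum_{j\in N_{i_0}}\log_{x_{i_0}}(x_j)=0$ with $\log_{x_{i_0}}(y^\ast)$ yields a sum of non-negative terms equal to zero, so each term vanishes and $\phi'(0)=0$ for every $j\in N_{i_0}$. But a strictly convex $\phi$ with $\phi'(0)=0$ is strictly increasing on $(0,1]$, contradicting $\phi(1)\le\phi(0)$ unless $\gamma$ is the constant path; hence $x_j=x_{i_0}$ for every $j\in N_{i_0}$. Each such $x_j$ then also attains the maximum $R$ and is itself a critical configuration, so the same argument applies at $j$; since $G$ is connected this propagates to all nodes, giving $x_1=\dots=x_N$, i.e. $\vct{x}\in\calD$.

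The step I expect to be the main obstacle is the geometric one in the second paragraph: establishing genuine \emph{strict} convexity of $z\mapsto d^2(z,y^\ast)$ on $\calB_\M(y^\ast,r^\ast)$ (mere convexity only shows all $x_i$ lie on a common geodesic sphere about $y^\ast$, which is insufficient) and cleanly disposing of the degenerate case where $\gamma$ collapses to a point. This rests on the Hessian comparison theorem under the bound $\Delta$ combined with the two defining constraints on $r^\ast$; the remainder of the proof is bookkeeping with first-variation formulas and the connectivity of $G$.
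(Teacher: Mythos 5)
Your proposal is correct, but it follows a genuinely different route from the paper. The paper's proof fixes the witness $y$, builds the radial geodesics $\gamma_i$ from $y$ to each $x_i$, and shows that the directional derivative of $\varphi$ along the resulting product geodesic $\vct\gamma$ in $\M^N$ is strictly positive at $t=1$ whenever $\vct x\notin\calD$; the positivity of each pairwise term is exactly Lemma~\ref{thm:derdistancepair}, which is proved in the Appendix via hinge/triangle comparison (Alexander--Toponogov and the laws of cosines). You instead work node-by-node with the stationarity equations $\sum_{j\in N_i}\log_{x_i}(x_j)=0$: strict convexity of $z\mapsto d^2(z,y^\ast)$ on the convex ball $\calB_\M(y^\ast,r^\ast)$ (Hessian comparison under $K_\sigma\le\Delta$, valid because $r^\ast\le\tfrac{\pi}{2\sqrt\Delta}$ and $r^\ast\le\tfrac12\inj\M$ keep the radial coefficient positive and avoid the cut locus), together with a maximum principle at the node farthest from $y^\ast$, forces all neighbors of that node to coincide with it, and connectivity propagates equality to the whole graph --- a structure reminiscent of the paper's tree argument in Theorem~\ref{thm:minset-tree}. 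Your checks are the right ones and they go through: on $\Sset$ every pair of neighbors satisfies $d(x_i,x_j)<2r^\ast\le\inj\M$, so the gradient formula \eqref{eq:gradRiemann-cost} holds and ``critical'' means ``stationary''; the first-variation identity $\phi'(0)=-2\metric{\log_{x_{i_0}}(y^\ast)}{\log_{x_{i_0}}(x_j)}$ plus convexity gives the sign condition, and strict convexity rules out the nonconstant geodesic. What each approach buys: the paper's argument yields an explicit descent direction (radial contraction toward $y$) and reuses comparison machinery it needs anyway for the Hessian bounds, whereas yours is more elementary in that it only invokes the textbook Hessian comparison for the squared distance and bypasses Lemma~\ref{thm:derdistancepair} and the product-manifold geodesic entirely.
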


For the proof, we need the following Lemma, which is proven in Section \ref{sc:firstderdistance} of the Appendix.
\begin{lemma}\label{thm:derdistancepair}
 Let $x_1$, $x_2$, $y$ be three points in $\M$ such that $d(x_i,y)<r^\ast$, $i=1,2$. Define the unique minimal geodesics $\gamma_i(t)$ such that $\gamma_i(0)=y$ and $\gamma_i(1)=x_i$, $i=1,2$. Define also $\phi_{12}(t)=d(\gamma_1(t),\gamma_2(t))$. Then $\dert \phi_{12}^2(t)\geq0$ for $t\in (0,1]$, with equality if and only if $x_1=x_2$. 
\end{lemma}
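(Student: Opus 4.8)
The plan is to differentiate $\phi_{12}^2$ via the first variation of the squared distance, rewrite the result in terms of the interior angles of the geodesic triangle with vertices $y$, $\gamma_1(t)$ and $\gamma_2(t)$, and then bound those angles by comparison with the model space of constant curvature $\Delta$. First I would fix the domain: since $d(\gamma_i(t),y)=t\,d(x_i,y)<r^\ast$ for $t\in[0,1]$, both geodesics stay inside the geodesically convex ball $\calB_\M(y,r^\ast)$, on which $d^2$ is smooth and minimal geodesics are unique. If $x_1=x_2$ the two geodesics coincide, so $\phi_{12}\equiv0$ and the derivative vanishes identically; this gives one direction of the ``equality'' claim, and from here on I assume $x_1\neq x_2$, so that $\gamma_1(t)\neq\gamma_2(t)$ and $\phi_{12}(t)>0$ for $t>0$.

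Next I would compute the derivative. Writing $q_i=\gamma_i(t)$ and using the identity $\frac12\grad_x d^2(x,z)=-\log_x(z)$ together with the chain rule on $\M\times\M$,
\begin{equation*}
\frac{1}{2}\dert\phi_{12}^2(t)=-\metric{\log_{q_1}(q_2)}{\dot\gamma_1(t)}-\metric{\log_{q_2}(q_1)}{\dot\gamma_2(t)}=\phi_{12}(t)\bigl(a\cos\alpha_1+b\cos\alpha_2\bigr),
\end{equation*}
where $a=d(y,x_1)=\norm{\dot\gamma_1(t)}$ and $b=d(y,x_2)=\norm{\dot\gamma_2(t)}$ are the (constant) speeds, and $\alpha_1,\alpha_2$ are the interior angles at $q_1,q_2$ of the triangle $(y,q_1,q_2)$ (using that $-\dot\gamma_i$ points back toward $y$ while $\log_{q_i}$ points toward the other vertex, so $-\metric{\log_{q_i}}{\dot\gamma_i}=\phi_{12}\cos\alpha_i$ times the respective speed). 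Since $\phi_{12}(t)>0$, it remains only to prove $a\cos\alpha_1+b\cos\alpha_2>0$.

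For this I would invoke the triangle-comparison theorem for sectional curvature bounded above by $\Delta$: because the triangle $(y,q_1,q_2)$ lies in the convex ball $\calB_\M(y,r^\ast)$, where $r^\ast=\frac12\min\{\inj\M,\pi/\sqrt{\Delta}\}$ is exactly the threshold on which the upper-curvature comparison is valid, its angles are no larger than the corresponding angles $\bar\alpha_1,\bar\alpha_2$ of the comparison triangle with the same side lengths $ta$, $tb$, $\phi_{12}(t)$ in the space of constant curvature $\Delta$ (the Euclidean plane when $\Delta\leq0$). Hence $\cos\alpha_i\geq\cos\bar\alpha_i$ and $a\cos\alpha_1+b\cos\alpha_2\geq a\cos\bar\alpha_1+b\cos\bar\alpha_2$, so it suffices to establish positivity in the constant-curvature model.

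Finally, in the model I would use the law of cosines written through $S_\Delta$ and $C_\Delta$ to express $\cos\bar\alpha_1$ and $\cos\bar\alpha_2$; the quantity $t\bigl(a\cos\bar\alpha_1+b\cos\bar\alpha_2\bigr)$ then equals $\bigl(P-Q\,C_\Delta(\phi_{12})\bigr)/\bigl(\Delta\,S_\Delta(\phi_{12})\bigr)$ for suitable $P,Q$, whose sign is that of $P-Q\,C_\Delta(\phi_{12})$. This is increasing in $\phi_{12}$ because $C_\Delta$ is decreasing and $Q>0$ (here the bound $ta,tb<r^\ast\leq\frac{\pi}{2\sqrt{\Delta}}$ is precisely what makes $C_\Delta(ta),C_\Delta(tb)>0$, hence $Q>0$). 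Evaluating at the smallest admissible side $\phi_{12}=\abs{ta-tb}$ and applying the addition formulas for $S_\Delta,C_\Delta$, this boundary value collapses to $(\alpha-\beta)\sin(\alpha-\beta)$ with $\alpha=\sqrt{\Delta}\,ta$ and $\beta=\sqrt{\Delta}\,tb\in(0,\tfrac{\pi}{2})$, which is nonnegative; when $\Delta\le0$ the same quantity reduces to the elementary Euclidean identity $a\cos\bar\alpha_1+b\cos\bar\alpha_2=\phi_{12}(t)/t$. A short case split (the radially aligned case $a=b$, where $\phi_{12}(t)>0=\abs{ta-tb}$, versus $a\neq b$, where already the boundary value is strictly positive) then upgrades nonnegativity to strict positivity exactly when $x_1\neq x_2$. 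The main obstacle is the comparison step: getting the direction of the angle inequality right and confirming that the radius $r^\ast$ is small enough for the upper-curvature comparison to apply; the closed-form model computation is then routine, and it is pleasant that it reduces to the manifestly nonnegative $(\alpha-\beta)\sin(\alpha-\beta)$.
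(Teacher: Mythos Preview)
Your approach is essentially the same as the paper's: compute $\frac{\de}{\de t}\phi_{12}$ via the first variation formula as $a\cos\alpha_1+b\cos\alpha_2$ (the paper's Proposition~\ref{prop:dertriangdist}), use the upper-curvature triangle comparison (the paper's Theorem~\ref{thm:ATCT2}) to replace the angles by those of a same-side comparison triangle in $\M_\Delta$, and then verify positivity in the model space. The only difference is the model-space step: the paper interprets $a\cos\bar\alpha_1+b\cos\bar\alpha_2$ as $\frac{\de\tilde\phi_{12}}{\de t}$ for the corresponding hinge in $\M_\Delta$ and differentiates the apex-angle law of cosines directly (Proposition~\ref{prop:posderMD}), whereas you solve the base-angle law of cosines for $\cos\bar\alpha_i$, note that the resulting expression has the sign of $P-Q\,C_\Delta(c)$ which is monotone in $c$, and evaluate at the degenerate boundary $c=\lvert ta-tb\rvert$; both routes are short and correct, and your boundary value is indeed $(a-b)\sin(\alpha-\beta)$, which differs from your stated $(\alpha-\beta)\sin(\alpha-\beta)$ only by the positive factor $(\sqrt{\Delta}\,t)^{-1}$.
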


\begin{proof}[Proof of Theorem \ref{th:Riemann-globalminset}]
If $\vct{x}\in\calD$, then, from Proposition~\ref{lm:globalmincost}, $\vct{x}$ is a global minimizer of $\varphi$ and hence a critical point. On the other hand, $\vct{x}\notin \calD$ cannot be a critical point of $\varphi$ because, as we will show now, there exists a geodesic $\vct{\gamma}:[0,1]\to\M^N$ such that $\vct{\gamma}(1)=\vct{x}$ and along which $\left.\frac{\de}{\de t}\varphi(\vct{\gamma}(t))\right|_{t=1}\neq 0$.
Notice that since $\vct{x} \in \Sset$, there exists a $y \in \M$ such that $\max_{i\in V} d(x_{i},y)<r^\ast$. Define unique minimal geodesics $\gamma_i(t)$ such that $\gamma_i(0)=y$ and $\gamma_i(1)=x_{i}$. Then $\vct\gamma(t)=(\gamma_1(t),\ldots,\gamma_N(t))$ is a minimal geodesic in $\M^N$ (see also Figure \ref{fig:constructgammaij}). It follows that 
\begin{equation}
 \left.\frac{\de }{\de t}\varphi(\vct\gamma(t))\right|_{t=1}=\frac{1}{2} \sum_{\{i,j\}\in E} \left.\frac{\de }{\de t}d^2\bigl(\gamma_i(t),\gamma_j(t)\bigr)\right|_{t=1}=\sum_{\{i,j\}\in E} \metric{\dot{\gamma}_i(t)}{\log_{x_i}(x_j)}+\metric{\dot{\gamma}_j(t)}{\log_{x_j}(x_i)}.
 \label{eq:dersumcost}
\end{equation}

Since $d(x_{i},y)<r^\ast$, from Lemma \ref{thm:derdistancepair} we know that each term in the sum in the RHS of~\eqref{eq:dersumcost} (i.e., each derivative) is positive except for the case where $\gamma_i(1)=\gamma_j(1)$, \ie $x_i=x_j$ and $\log_{x_i}(x_j)=0$. If \emph{all} the terms of the sum were zero, from the connectedness of $G$ we would have that $x_i=x_j$ for all $\{i,j\} \in E$, \ie $x\in \calD$. However, by assumption $\vct{x} \notin \calD$, hence at least one of the terms in \eqref{eq:dersumcost}, and therefore the entire sum $\left.\frac{\de }{\de t}\varphi(\vct\gamma(t))\right|_{t=1}$, must be strictly positive.

From the definition of gradient, $\grad_{\vct{x}}\varphi(\vct{x})=0$ if and only if the directional derivative $\left.\frac{\de }{\de t}\varphi(\vct\gamma(t))\right|_{t=1}=0$ for any curve $\vct\gamma(t)$ passing through $\vct{x}$, \ie $\vct\gamma(1)=\vct{x}$. Since we have just shown that $\left.\frac{\de }{\de t}\varphi(\vct\gamma(t))\right|_{t=1}>0$, $\vct{x}$ is not a critical point.
\end{proof}

The bounds in Lemma \ref{thm:derdistancepair} are, in general, quite conservative. In practice, there might be a set containing $\calD$ and no other critical points which is larger than $\Sset$, i.e., in general $\Sset$ is not maximal.

In fact, if the graph $G$ is a tree, we can show the following stronger result.
\begin{thm}\label{thm:minset-tree}
  If $G$ is a tree, any stationary point $\vct{x}$ of $\varphi$ is a global minimizer, \ie $\vct{x}\in \calD$.
\end{thm}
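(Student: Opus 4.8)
The plan is to prove the statement by induction on the number of nodes $N$, stripping leaves from the tree one at a time. The base case $N=1$ is immediate: then $\varphi\equiv 0$, $\calD=\M^{1}$, and every point is (trivially) a global minimizer. For the inductive step, recall that a finite tree with $N\geq 2$ vertices has at least one leaf; let $i$ be such a leaf and $j$ its unique neighbour. The role of the tree hypothesis is that $\varphi$ depends on $x_i$ only through the single summand $\frac{1}{2}d^2(x_i,x_j)$, so the $i$-th block of the gradient is $\grad_{x_i}\varphi(\vct x)=-\log_{x_i}(x_j)$. Since the gradient of $\frac{1}{2}d^2(\cdot,x_j)$, wherever it exists, has norm equal to $d(\cdot,x_j)$, stationarity of $\vct x$ forces $d(x_i,x_j)=0$, i.e.\ $x_i=x_j$.

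I would then pass to the tree $G'=G\setminus\{i\}$ on $N-1$ vertices, with associated cost $\varphi'$ and truncated configuration $\vct x'=(x_k)_{k\neq i}$. For every node $k\neq i,j$ the neighbour set is unchanged, so $\grad_{x_k}\varphi'(\vct x')=\grad_{x_k}\varphi(\vct x)=0$. For node $j$, the only summand dropped in passing from $\varphi$ to $\varphi'$ is $\frac{1}{2}d^2(x_j,x_i)$, and its gradient at $x_j$ vanishes because $x_i=x_j$ (the function $\frac{1}{2}d^2(\cdot,x_i)$ is smooth near its minimizer $x_i$, with zero gradient there); hence $\grad_{x_j}\varphi'(\vct x')=\grad_{x_j}\varphi(\vct x)=0$. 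Thus $\vct x'$ is a stationary point of $\varphi'$, and the induction hypothesis gives $\vct x'\in\calD$, i.e.\ $x_k=y$ for all $k\neq i$ and some $y\in\M$. In particular $x_j=y$, so $x_i=x_j=y$ and $\vct x\in\calD$; by Proposition~\ref{lm:globalmincost} this means $\vct x$ is a global minimizer.

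The argument is largely routine; the one point worth care is the standing assumption underlying \eqref{eq:gradRiemann-cost}, namely that at the stationary point each $x_j$ with $j\in N_i$ lies outside the cut locus of $x_i$, so that the terms $d^2(x_i,x_j)$ are differentiable and the per-node gradients have the stated form — this is precisely what is meant by ``$\vct x$ is a \emph{stationary} point'' as opposed to a critical point at which the gradient may fail to exist. Once this is granted, the norm identity $\norm{\log_x y}=d(x,y)$ does all the work, and, in contrast to Theorem~\ref{th:Riemann-globalminset}, no restriction such as $\vct x\in\Sset$ is needed: acyclicity of $G$ lets us reduce, one leaf at a time, to the trivial case.
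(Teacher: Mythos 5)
Your proof is correct and is essentially the same argument as the paper's: both hinge on the observation that at a stationary point the gradient block at a leaf $i$ with unique neighbour $j$ is $-\log_{x_i}(x_j)$, forcing $x_i=x_j$, and then propagate this equality inward over the tree by induction. The paper organizes the induction by hop-distance from a chosen root instead of deleting leaves and inducting on $N$, but the content is the same, and your explicit caveat about differentiability (stationary vs.\ critical point) matches the paper's implicit standing assumption in \eqref{eq:gradRiemann-cost}.
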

\begin{proof} We will now introduce some new notation exclusively for the purposes of this proof.
Pick an arbitrary node as the root of the tree and denote as $x^{(p)}_{i^{(p)}}$ the state of the $i^{(p)}$-th node among the ones at hop-distance $p$ from the root (\eg $x^{(0)}_1$ is the state at the root). Also, let $x^{(p-1)}_{i^{(p)}}$ and $x^{(p+1)}_{i^{(p)},j}$ denote, respectively, the parent and the $j$-th children of $x^{(p)}_{i^{(p)}}$, $j\in\{1,\abs{N_{i^{(p)}}}-1\}$. Using this notation we can rewrite \eqref{eq:gradRiemann-cost} as
\begin{equation}
  \grad_{x^{(p)}_{i^{(p)}}}\varphi=-\log_{x^{(p)}_{i^{(p)}}} x^{(p-1)}_{i^{(p)}} -\sum_{j=1}^{\abs{N_{i^{(p)}}}-1} \log_{x^{(p)}_{i^{(p)}}}x^{(p+1)}_{i^{(p)},j},
\label{eq:gradphiTree}
\end{equation}
with the appropriate modifications for the leaves and the root of $G$. Now assume $\grad_{\vct{x}}\varphi=0$. For a leaf node, \eqref{eq:gradphiTree} becomes $\log_{x^{(p)}_{i^{(p)}}} x^{(p-1)}_{i^{(p)}}=0$ (since leafs do not have any child) and therefore $x^{(p)}_{i^{(p)}}=x^{(p-1)}_{i^{(p)}}$. Now assume that, for a given hop-distance $p$, we have $x^{(p)}_{i^{(p)}}=x^{(p+1)}_{i^{(p)},j}$ for all indeces $i^{(p)}$ and $j$. Then, according to \eqref{eq:gradphiTree}, again $x^{(p)}_{i^{(p)}}=x^{(p-1)}_{i^{(p)}}$. It is then simple to show, by induction, that $x_i=x^{(0)}_1$ for any $i\in\{1,\ldots,N\}$. Therefore, $\grad_{\vct{x}}\varphi=0$ implies $\vct{x}\in\calD$.
\end{proof}

We will use Theorem~\ref{th:Riemann-globalminset} to show local convergence in general manifolds (\S\ref{sec:convergenceset}) and manifolds of non-negative, constant curvature (\S\ref{sec:convergencePoint}), while we will use Theorem~\ref{thm:minset-tree} for proving global convergence when $G$ has linear topology (\S\ref{sec:convergenceGlobal}).

\begin{figure}[b]
\begin{center}
\vspace{-3mm}

\beginpgfgraphicnamed{geodconstruction}
\begin{tikzpicture}[yscale=0.7,xscale=0.8,mypoint/.style = {fill,inner sep=1.2pt,circle},hidline/.style={thin,gray}, myarrow/.style={->,gray,shorten >= 1pt}]

 \draw (0,0) circle (2);
 \node at (-135:1.6) {$\M$};

 \node[mypoint,label=right:$y$] (y) at (0.4,-0) {};
 \node[mypoint,label=right:$x_1$] (x1) at (0.2,1.2) {};
 \node[mypoint,label=below:$x_2$] (x2) at (-0.7,-0.1) {};
 \node[mypoint,label=left:$x_3$] (x3) at (1.2,-0.8) {};

 \draw (y) circle (1.5);
 \draw (y) to[out=90,in=-90] (x1) node[pos=0.5,sloped,below]{$\gamma_1$};
 \draw (y) to[out=180,in=0] (x2) node[pos=0.5,sloped,below]{$\gamma_2$};
 \draw (y) to[out=-90,in=90] (x3) node[pos=0.8,sloped,above]{$\gamma_3$};

 \begin{scope}[xshift=5.6cm]
  \draw (0,0) ellipse (3 and 3.3);
  \node at (-45:2.7) {$\M^N$};

  \def\l{0.3}
  \draw (0,0)             +(-0.5,-2) coordinate (dl) +(0.5,2) coordinate (du);
  \draw (-1,0) ++(-\l,-\l) +(-0.5,-2) coordinate (sal) +(0.5,2) coordinate (sau);
  \draw ( 1,0) ++(-\l,-\l) +(-0.5,-2) coordinate (sbl) +(0.5,2) coordinate (sbu);
  \draw (-1,0) ++( \l, \l) +(-0.5,-2) coordinate (scl) +(0.5,2) coordinate (scu);
  \draw ( 1,0) ++( \l, \l) +(-0.5,-2) coordinate (sdl) +(0.5,2) coordinate (sdu);

  \draw[thick] (dl) to[out=90,in=-90] (du) node (vcty) [mypoint,pos=0.3,label=left:$\vct{y}$]{} node [pos=1,anchor=west]{$\calD$};

  \draw (sal) to[out=90,in=-90] (sau);
  \draw (sbl)  node[anchor=west]{$\Sset$} to[out=90,in=-90] (sbu);
  \draw[hidline] (scl) to[out=90,in=-90] (scu);
  \draw (sdl) to[out=90,in=-90] (sdu);

  \draw (sal) to[out=-10,in=170] (sbl);
  \draw (sbl) to[out=55,in=-125] (sdl);
  \draw[hidline] (scl) to[out=-10,in=170] (sdl);
  \draw[hidline] (sal) to[out=55,in=-125] (scl);

  \draw (sau) to[out=-10,in=170] (sbu);
  \draw (sbu) to[out=55,in=-125] (sdu);
  \draw (scu) to[out=-10,in=170] (sdu);
  \draw (sau) to[out=55,in=-125] (scu);

  \draw (vcty) +(0.7,0.8) node (vctx) [mypoint,label=above right:$\vct{x}$]{};
  \draw (vcty) to[out=0, in=-120] (vctx) node[pos=0.5,below,sloped] {$\vct\gamma$};
 \end{scope}

 \draw[myarrow] (y) to[out=45,in=135] (vcty);
 \draw[myarrow] (x1) to[out=45,in=125] (vctx) (x2) to[out=85,in=125] (vctx) (x3) to[out=45,in=125] (vctx);

\end{tikzpicture}
\endpgfgraphicnamed

\vspace{-3mm}

\caption{Construction of the geodesic for testing if $\varphi$ has a local minimum at $(x_i,x_j)$.}
\label{fig:constructgammaij}
\end{center}
\end{figure}
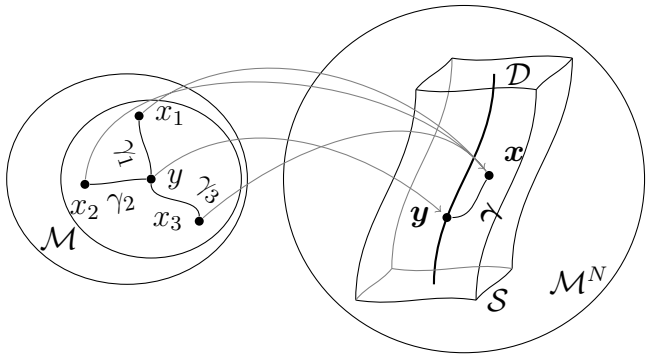

\subsection{Choice of the stepsize $\varepsilon$}
\label{sc:choiceepsilon}
In this section we provide results on the range of admissible $\varepsilon$ which can be computed in a distributed way, and guarantees convergence of the consensus protocol \eqref{eq:Riemann-protocol}.
From Theorem~\ref{thm:boundsRiemannian}, we already know that any $\varepsilon\in (0,2\mu_{max}^{-1})$ is admissible, where $\mu_{max}$
is a bound on $\hess\varphi(\vct x)$, as per Definition~\ref{def:hessianRiemannian}. However, we need to estimate a value for $\mu_{max}$, and it should be possible to compute this value in a distributed way. The following Theorem provides a step in this direction.


\begin{thm}\label{thm:boundsRiemannianSumofPairs}
  Given a graph $G=(V,E)$, let $\varphi: \M^N \to \real{}$ be a function defined as 
\begin{equation}
 \varphi(\vct{x})=\sum_{\{i,j\} \in E} \varphi_{ij}(x_i,x_j),
 \label{eq:pairwisephi}
\end{equation}
where, for all $i \in V$, $x_i \in \X_i\subseteq \M$ and, for all $\{i,j\}\in E$, $\varphi_{ij}:\X_i\times \X_j \to \real{}$. Let also $\mu_{max}^d$ be a bound on the Hessian of the pairwise function $\varphi_{ij}$, for all $\{i,j\}\in E$. Then, a bound on the Hessian of the global function $\varphi$ on $\X=\X_1 \times \ldots \times \X_N$ is given by
\begin{equation}
 \mu_{max}= \mu_{max}^d\Deg,
\end{equation}
where $\Deg$ is the maximum node degree of the graph $G$.
\end{thm}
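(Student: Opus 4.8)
The plan is to work directly from Definition~\ref{def:hessianRiemannian}, avoiding any explicit computation with covariant derivatives. Fix an arbitrary base point $\vct{x}_0=(x_{0,1},\ldots,x_{0,N})\in\X$ and an arbitrary tangent vector $\vct{v}=(v_1,\ldots,v_N)\in T_{\vct{x}_0}\M^N$, and consider the geodesic $\vct{\gamma}(t)=\exp_{\vct{x}_0}(t\vct{v})$. Since geodesics, exponential maps and derivatives in the product manifold $\M^N$ are obtained componentwise, $\vct{\gamma}(t)=(\gamma_1(t),\ldots,\gamma_N(t))$ with $\gamma_i(t)=\exp_{x_{0,i}}(tv_i)$, and differentiating \eqref{eq:pairwisephi} twice in $t$ gives
\[
\left.\frac{\de^2}{\de t^2}\varphi(\vct{\gamma}(t))\right|_{t=0}=\sum_{\{i,j\}\in E}\left.\frac{\de^2}{\de t^2}\varphi_{ij}\bigl(\gamma_i(t),\gamma_j(t)\bigr)\right|_{t=0}.
\]

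The next step is to bound each term of this sum by means of the hypothesis. For a fixed edge $\{i,j\}$, the curve $t\mapsto(\gamma_i(t),\gamma_j(t))$ is precisely the geodesic of the product manifold $\M\times\M$ that starts at $(x_{0,i},x_{0,j})\in\X_i\times\X_j$ with initial velocity $(v_i,v_j)$, and in the natural product metric $\norm{(v_i,v_j)}^2=\norm{v_i}^2+\norm{v_j}^2$. Applying Definition~\ref{def:hessianRiemannian} to $\varphi_{ij}$, regarded as a function on the open subset $\X_i\times\X_j$ of $\M\times\M$, together with the hypothesis that $\mu_{max}^d$ bounds $\hess\varphi_{ij}$, yields
\[
\left.\frac{\de^2}{\de t^2}\varphi_{ij}\bigl(\gamma_i(t),\gamma_j(t)\bigr)\right|_{t=0}\leq\mu_{max}^d\bigl(\norm{v_i}^2+\norm{v_j}^2\bigr).
\]

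Finally I would carry out the edge-to-vertex counting. Summing the previous bound over all edges and regrouping by node, the term $\norm{v_i}^2$ appears once for each edge incident to $i$, that is $\abs{N_i}$ times, so
\[
\sum_{\{i,j\}\in E}\bigl(\norm{v_i}^2+\norm{v_j}^2\bigr)=\sum_{i\in V}\abs{N_i}\,\norm{v_i}^2\leq\Deg\sum_{i\in V}\norm{v_i}^2=\Deg\,\norm{\vct{v}}^2.
\]
Chaining the three displays gives $\left.\frac{\de^2}{\de t^2}\varphi(\vct{\gamma}(t))\right|_{t=0}\leq\mu_{max}^d\Deg\,\norm{\vct{v}}^2$, and since $\vct{x}_0\in\X$ and $\vct{v}$ were arbitrary this is exactly the statement that $\mu_{max}=\mu_{max}^d\Deg$ is a uniform bound on $\hess\varphi$ over $\X$.

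The argument is essentially bookkeeping, and I do not expect a genuine obstacle. The one point that warrants a careful sentence is the reading of the hypothesis: the bound on $\hess\varphi_{ij}$ must be understood with respect to the product manifold $\M\times\M$ with its natural metric, so that the per-edge bound comes out as $\mu_{max}^d(\norm{v_i}^2+\norm{v_j}^2)$ with no cross terms; the additive splitting of the second derivative of $\varphi$ over the edges then follows because differentiation is linear and each $\varphi_{ij}$ depends only on the coordinates $x_i,x_j$, whose curves $\gamma_i,\gamma_j$ are themselves geodesics of $\M$.
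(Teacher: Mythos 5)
Your proposal is correct and follows essentially the same route as the paper's proof: split the second derivative of $\varphi$ along a geodesic into the per-edge terms, bound each by $\mu_{max}^d\bigl(\norm{v_i}^2+\norm{v_j}^2\bigr)$ via the hypothesis on $\hess\varphi_{ij}$ in the product metric, and regroup edge contributions by vertex to pick up the factor $\abs{N_i}\leq\Deg$. The only (harmless) difference is that you take an arbitrary tangent direction $\vct{v}$, whereas the paper writes the computation along the negative gradient direction; your version is in fact slightly more faithful to Definition~\ref{def:hessianRiemannian}, which requires the bound for all directions.
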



\begin{proof}
The gradient of \eqref{eq:pairwisephi} at a point $\vct{x}=(x_{1},\ldots,x_{N})$ is given by  $\vct{v}=\grad_{\vct{x}} \varphi(\vct{x})$
where $v_i=\grad_{x_i} \varphi(\vct{x})$.
Define the cost function restricted to the geodesic along the gradient descent direction as $\tilde \varphi(t) = \varphi\bigl(\exp_{\vct{x_{0}}}(-t\vct{v})\bigr)$. Similarly, define the restriction for each pairwise term $\tilde \varphi_{ij}(t) = \varphi_{ij}\bigl(\exp_{x_{0i}} (-tv_i), \exp_{x_{j}} (-tv_j)\bigr)$.
Using the definition of $\mu_{max}^d$ we have
\begin{equation}
 \left.\frac{\de^2}{\de t^2}\tilde\varphi_{ij}(t)\right|_{t=0} < \mu_{max}^d (\norm{v_i}^2 + \norm{v_j}^2).
\end{equation}

The second derivative of $\tilde\varphi(t)$, and hence the Hessian of $\varphi$, can be uniformly bound as:
\begin{multline}
 \left.\frac{\de^2}{\de t^2}\tilde\varphi_{ij}(t)\right|_{t=0}=\sum_{\{i,j\}\in E} \frac{\de^2}{\de t^2}\tilde\varphi_{ij}(0)
 < \mu_{max}^d \sum_{\{i,j\}\in E} (\norm{v_i}^2 + \norm{v_j}^2)  = \mu_{max}^d \sum_{i \in V} \abs{N_i} \norm{v_i}^2\\
 \leq \mu_{max}^d \Deg \sum_{i\in v} \norm{v_i}^2 = \mu_{max}^d \Deg \norm{\vct{v}}^2.
\end{multline}
The claim of the Theorem follows.
\end{proof}

In our case, the global cost function $\varphi$ is given by \eqref{eq:Riemann-cost}, and $\X=\calE_{\M^N}(d_{max})$ where 
\begin{equation}
\label{eq:calE}
\calE_{\M^N}(d_{max})=\{\vct{x} \in \M^N: d(x_i,x_j)<d_{max},\,\forall \{i,j\}\in E\},
\end{equation}
and $d_{max}\leq 2r^\ast$ represents the maximum allowed distance between the states of any two neighboring nodes.
The bound on the pairwise distances $\mu_{max}^d$ is given by the following. 
\begin{thm}\label{thm:boundhessiandistance}
 The Hessian of the function $\varphi_{ij}(x_i,x_j)=\frac{1}{2}d^2(x_i,x_j)$ can be bounded on $\calE_{\M^2}$ by
\begin{equation}
\mu_{max}^d(d_{max}) = \max\left\{2,d_{max} \left(\frac{C_\delta(d_{max})}{S_\delta(d_{max})}+\frac{1}{S_\Delta(d_{max})}\right)\right\},
\end{equation}
where $C_{\kappa}(d_{max})$, $S_{\kappa}(d_{max})$ are defined in \eqref{eq:SkCk}.
\end{thm}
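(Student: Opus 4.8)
The plan is to compute the second derivative of $\varphi_{ij}$ along geodesics as the index form of a Jacobi field on the minimal geodesic between $x_i$ and $x_j$, and then to squeeze that index form between the two constant-curvature comparison models. Fix $(x_i,x_j)\in\calE_{\M^2}$, so $d:=d(x_i,x_j)<d_{max}\le 2r^\ast=\min\{\inj\M,\pi/\sqrt{\Delta}\}$; in particular the minimal geodesic $\gamma\colon[0,1]\to\M$ from $x_i$ to $x_j$ is unique and free of conjugate points, and $\tfrac12 d^2$ is smooth near $(x_i,x_j)$. Given tangent vectors $v_i\in T_{x_i}\M$, $v_j\in T_{x_j}\M$ and geodesics $\gamma_i,\gamma_j$ with $\dot\gamma_i(0)=v_i$, $\dot\gamma_j(0)=v_j$, write $c(\cdot,t)$ for the constant-speed minimal geodesic from $\gamma_i(t)$ to $\gamma_j(t)$; then $\varphi_{ij}(\gamma_i(t),\gamma_j(t))=\tfrac12\int_0^1\|\partial_s c(s,t)\|^2\,\de s$, so by the second variation of the energy (the boundary acceleration terms vanish because $\gamma_i,\gamma_j$ are geodesics) one gets $\left.\tfrac{\de^2}{\de t^2}\varphi_{ij}(\gamma_i(t),\gamma_j(t))\right|_{t=0}=I(V,V)$, where $I$ is the index form along $\gamma$ and $V$ is the Jacobi field with $V(0)=v_i$, $V(1)=v_j$.

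Next I would split $V=V^{\top}+V^{\perp}$ into its component tangent to $\gamma$ (necessarily affine along $\gamma$, a Jacobi field with endpoint values $v_i^{\top},v_j^{\top}$) and its component normal to $\gamma$. Because $\nabla_s V^{\top}\parallel\dot\gamma$ while $\nabla_s V^{\perp}\perp\dot\gamma$, and $R(V^{\top},\dot\gamma)\dot\gamma=0$, the index form decouples: $I(V,V)=I(V^{\top},V^{\top})+I(V^{\perp},V^{\perp})$. The tangential part is elementary: moving the two endpoints along $\gamma$ changes the distance at the constant rate $\langle v_j,\hat e_j\rangle-\langle v_i,\hat e_i\rangle$ ($\hat e_i,\hat e_j$ the unit tangents of $\gamma$ at its endpoints), so $I(V^{\top},V^{\top})=(\langle v_j,\hat e_j\rangle-\langle v_i,\hat e_i\rangle)^2\le 2(\|v_i^{\top}\|^2+\|v_j^{\top}\|^2)$, which is the ``$2$'' in the statement.

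For the normal part I would use $I(V^{\perp},V^{\perp})=\langle\nabla_s V^{\perp},V^{\perp}\rangle\big|_0^1$ and the representation of $V^{\perp}$ through the solution operators of the normal Jacobi equation. Each boundary term then breaks into a ``self'' piece, which is exactly $\hess_{x_i}\bigl(\tfrac12 d^2(\cdot,x_j)\bigr)(v_i^{\perp},v_i^{\perp})$ (resp. $\hess_{x_j}\bigl(\tfrac12 d^2(x_i,\cdot)\bigr)(v_j^{\perp},v_j^{\perp})$), and a ``mixed'' piece of the form $-\langle(\de\exp)^{-1}v^{\perp},w^{\perp}\rangle$, where $\de\exp$ is the differential of the exponential map realizing $\gamma$. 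By the Hessian comparison theorem applied with the lower bound $\delta\le K$, the self pieces are at most $d\,\tfrac{C_\delta(d)}{S_\delta(d)}\|v_i^{\perp}\|^2$ and $d\,\tfrac{C_\delta(d)}{S_\delta(d)}\|v_j^{\perp}\|^2$; by the Rauch comparison theorem applied with the upper bound $K\le\Delta$, the differential $\de\exp$ restricted to normal directions has all singular values at least $S_\Delta(d)/d$ (here we use $d<\pi/\sqrt\Delta$, which rules out conjugate points in the comparison model), hence $\|(\de\exp)^{-1}\|\le d/S_\Delta(d)$ and each mixed piece is bounded by $\tfrac{d}{S_\Delta(d)}\|v_i^{\perp}\|\,\|v_j^{\perp}\|$. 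Using $\|v_i^{\perp}\|\,\|v_j^{\perp}\|\le\tfrac12(\|v_i^{\perp}\|^2+\|v_j^{\perp}\|^2)$ yields $I(V^{\perp},V^{\perp})\le d\bigl(\tfrac{C_\delta(d)}{S_\delta(d)}+\tfrac1{S_\Delta(d)}\bigr)(\|v_i^{\perp}\|^2+\|v_j^{\perp}\|^2)$.

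Adding the two contributions gives $\left.\tfrac{\de^2}{\de t^2}\varphi_{ij}\right|_{t=0}\le\max\bigl\{2,\ d\bigl(\tfrac{C_\delta(d)}{S_\delta(d)}+\tfrac1{S_\Delta(d)}\bigr)\bigr\}(\|v_i\|^2+\|v_j\|^2)$, so by Definition~\ref{def:hessianRiemannian} it only remains to pass from the actual distance $d$ to the uniform bound $d_{max}$. This is an elementary monotonicity check on the two summands: $d\mapsto d/S_\Delta(d)$ is increasing when $\Delta>0$ and $\le 1$ when $\Delta\le 0$, while $d\mapsto d\,C_\delta(d)/S_\delta(d)$ is increasing when $\delta<0$ and $\le 1$ (hence dominated by the constant $2$) when $\delta\ge 0$. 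The main obstacle will be the normal-part estimate --- correctly attributing the ``self'' Hessians to the lower curvature bound $\delta$ and the norm of $(\de\exp)^{-1}$ (the mixed term) to the upper bound $\Delta$, and extracting the sharp constants $C_\delta/S_\delta$ and $1/S_\Delta$ from the comparison theorems; the tangential part and the final monotonicity step are routine.
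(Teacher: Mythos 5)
Your proposal is correct and follows essentially the same route as the paper's own proof: express the second derivative along geodesics through the boundary terms of a Jacobi field on the connecting geodesic, let the tangential part give the constant $2$, split the normal Jacobi field into two pieces vanishing at one endpoint each, bound the "self" terms via the lower curvature bound $\delta$ (giving $C_\delta/S_\delta$) and the "mixed" terms via the upper bound $\Delta$ (giving $1/S_\Delta$), and finish with $ab\leq\frac{a^2+b^2}{2}$. The differences are only in packaging (index form of the energy, Hessian comparison and the norm of $(\de\exp)^{-1}$ instead of the paper's direct use of the two Rauch comparison theorems on the split Jacobi fields), and your closing substitution of $d_{max}$ for the actual distance is treated at essentially the same level of detail as in the paper.
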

The proof can be found in Section \ref{sc:secondderdistancegen} of the Appendix. We remark that the bound on $\mu^d_{max}$ is sharp, in the sense that it can be achieved for manifolds with constant curvature (\ie $\delta=\Delta$, see Appendix). In fact, for Euclidean space and for spaces of non-negative constant curvature, \eg the sphere or $SO(3)$, this bound is $\mu_{max}^d=2$, and it is independent from the distance between the points. However, in general, the bound depends on $d_{max}$. Still, we might be able to find a uniform upper bound, for instance, in terms of the diameter of $\M$ or $r^\ast$. For instance, if we assume $d_{max}\leq2r^\ast$, then $\mu_{max}^d \simeq 3.792$ for both the Grassmann manifold and $SO(n)$, $n\geq4$.


From Theorem~\ref{thm:boundsRiemannianSumofPairs}, let $\mu_{max}=\Deg\mu_{max}^d(d_{max})$. We can state our second main contribution:
\smallskip

\begin{thm}\label{thm:localimprovement}
Assume that, for a given $l$, $\vct{x}(l) \in \calE_{\M^N}(d_{max})$ and $\exp_{\vct{x}(l)}\bigl(-t \grad_{\vct{x}}\varphi(\vct{x}(l))\bigr) \in \calE_{\M^N}(d_{max})$ for all $t\in \left(0,2\mu_{max}^{-1}\right)$.
If $\vct x(l+1)$ is given by the protocol \eqref{eq:Riemann-protocol} with $\varepsilon \in\left(0,2\mu_{max}^{-1}\right)$, then $\varphi(\vct x(l+1))\leq\varphi(\vct x(l))$, with equality if and only if $\vct x(l)$ is a stationary point of $\varphi$.
\end{thm}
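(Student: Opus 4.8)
The plan is to apply Theorem~\ref{thm:boundsRiemannian} directly, after assembling the two pieces that have already been established: a uniform Hessian bound for $\varphi$ on the relevant region, and the hypothesis that the gradient-descent geodesic stays in that region. First I would set $\X=\calE_{\M^N}(d_{max})$ and invoke Theorem~\ref{thm:boundsRiemannianSumofPairs} together with Theorem~\ref{thm:boundhessiandistance}: since $\varphi$ in \eqref{eq:Riemann-cost} has the pairwise-sum form \eqref{eq:pairwisephi} with $\varphi_{ij}=\tfrac12 d^2(x_i,x_j)$, and each $\varphi_{ij}$ has Hessian bounded by $\mu_{max}^d(d_{max})$ on $\calE_{\M^2}$, the composite bound $\mu_{max}=\Deg\,\mu_{max}^d(d_{max})$ is a valid uniform bound on $\hess\varphi$ over $\X$ in the sense of Definition~\ref{def:hessianRiemannian}.

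Next I would identify the descent geodesic. Set $x_0=\vct x(l)$, let $\vct w=-\grad_{\vct x}\varphi(\vct x(l))$ be the Riemannian gradient on $\M^N$ (whose $i$-th component is $-\grad_{x_i}\varphi$, computed as in \eqref{eq:gradRiemann-cost}), and let $\gamma_{x_0}(t)=\exp_{\vct x(l)}(t\vct w)=\exp_{\vct x(l)}\bigl(-t\grad_{\vct x}\varphi(\vct x(l))\bigr)$. The hypothesis of the present theorem is exactly the hypothesis of Theorem~\ref{thm:boundsRiemannian}: namely $\gamma_{x_0}(t)\in\X$ for all $t\in(0,2\mu_{max}^{-1})$. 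Therefore Theorem~\ref{thm:boundsRiemannian} applies with $\tilde\varphi(t)=\varphi(\gamma_{x_0}(t))$ and yields $\tilde\varphi(t)\le\tilde\varphi(0)$ for $t\in(0,2\mu_{max}^{-1})$, with equality iff $\vct x(l)$ is a stationary point of $\varphi$. Finally, since the protocol update \eqref{eq:Riemann-protocol} on $\M^N$ is precisely $\vct x(l+1)=\exp_{\vct x(l)}(-\varepsilon\grad_{\vct x}\varphi(\vct x(l)))=\gamma_{x_0}(\varepsilon)$ for a chosen $\varepsilon\in(0,2\mu_{max}^{-1})$, substituting $t=\varepsilon$ gives $\varphi(\vct x(l+1))=\tilde\varphi(\varepsilon)\le\tilde\varphi(0)=\varphi(\vct x(l))$, with equality iff $\vct x(l)$ is stationary.

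There is essentially no hard part here: the theorem is a corollary obtained by chaining Theorems~\ref{thm:boundsRiemannianSumofPairs}, \ref{thm:boundhessiandistance}, and \ref{thm:boundsRiemannian}. The only point deserving care — and the one I would state explicitly — is that the per-node updates \eqref{eq:Riemann-protocol} for $i=1,\dots,N$ assemble into the single gradient-descent step on the product manifold $\M^N$ with its product metric $\metric{\vct v}{\vct w}=\sum_i\metric{v_i}{w_i}$; this is what makes the scalar machinery of \S\ref{sc:graddescent} applicable to the distributed algorithm. One should also note that the theorem is a conditional ("local improvement") statement: it presupposes that $\vct x(l)$ and the whole descent geodesic up to parameter $2\mu_{max}^{-1}$ remain inside $\calE_{\M^N}(d_{max})$, and it does \emph{not} by itself guarantee that $\vct x(l+1)\in\calE_{\M^N}(d_{max})$; establishing forward invariance of such a set is deferred to the convergence analysis in \S\ref{sec:convergenceset}.
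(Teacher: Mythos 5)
Your proposal is correct and follows essentially the same route as the paper's own (much terser) proof: identify the per-node updates \eqref{eq:Riemann-protocol} as a single Riemannian gradient descent step on $\M^N$, use Theorems~\ref{thm:boundsRiemannianSumofPairs} and \ref{thm:boundhessiandistance} to obtain the uniform Hessian bound $\mu_{max}=\Deg\,\mu_{max}^d(d_{max})$ on $\calE_{\M^N}(d_{max})$, and then apply Theorem~\ref{thm:boundsRiemannian}. Your explicit remarks about the product-manifold structure and the conditional nature of the statement are accurate elaborations of what the paper leaves implicit.
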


\begin{proof}
As mentioned before, the update rule \eqref{eq:Riemann-protocol} corresponds to a Riemannian gradient descent step of $\varphi$. The claim then follows from Theorem \ref{thm:boundsRiemannian} and Proposition~\ref{thm:boundsRiemannianSumofPairs}.
\end{proof}
\smallskip

Notice that $\Sset\subseteq\calE_{\M^N}(2r^\ast)$. However, in general, $\calE_{\M^N}(2r^\ast)$ might be much larger than $\Sset$, especially when each node has a small number of neighbors. 

From Theorem \ref{thm:localimprovement}, we can deduce a simple corollary.
\begin{corollary}
 For spaces of constant curvature $\delta=\Delta\geq 0$, we can choose $\varepsilon \in (0,\Deg\inverse)$.
\end{corollary}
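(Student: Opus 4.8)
The plan is to combine Theorem~\ref{thm:localimprovement} with the two explicit Hessian bounds derived just above. Recall that Theorem~\ref{thm:localimprovement} guarantees that the consensus protocol \eqref{eq:Riemann-protocol} reduces the cost at each iteration whenever $\varepsilon \in \left(0, 2\mu_{max}^{-1}\right)$, where $\mu_{max} = \Deg\,\mu_{max}^d(d_{max})$. So the corollary will follow immediately once we show that in the constant non-negative curvature case we may take $\mu_{max}^d = 2$, since then $2\mu_{max}^{-1} = 2(2\Deg)^{-1} = \Deg^{-1}$.

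First I would invoke Theorem~\ref{thm:boundhessiandistance} in the special case $\delta = \Delta = \kcurv \geq 0$. Writing out the formula,
\begin{equation*}
\mu_{max}^d(d_{max}) = \max\left\{2,\; d_{max}\left(\frac{C_\kcurv(d_{max})}{S_\kcurv(d_{max})} + \frac{1}{S_\kcurv(d_{max})}\right)\right\}.
\end{equation*}
The point is that for $\kcurv \geq 0$ the second entry of the max never exceeds $2$ on the relevant range of $d_{max}$. When $\kcurv = 0$ this is immediate, since $C_0 \equiv 1$, $S_0(t) = t$, and the expression collapses to $d_{max}\cdot\frac{2}{d_{max}} = 2$. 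When $\kcurv > 0$, using $C_\kcurv(t) = \cos(\sqrt{\kcurv}\,t)$ and $S_\kcurv(t) = \sin(\sqrt{\kcurv}\,t)/\sqrt{\kcurv}$, the second term becomes $\sqrt{\kcurv}\,d_{max}\,\frac{1+\cos(\sqrt{\kcurv}\,d_{max})}{\sin(\sqrt{\kcurv}\,d_{max})} = \sqrt{\kcurv}\,d_{max}\cot\!\left(\frac{\sqrt{\kcurv}\,d_{max}}{2}\right)$, and one checks that the scalar function $\theta \mapsto \theta\cot(\theta/2)$ is bounded above by $2$ for $\theta \in (0,\pi)$ (it tends to $2$ as $\theta \to 0^+$ and decreases monotonically). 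Since $d_{max} \leq 2r^\ast \leq \pi/\sqrt{\Delta} = \pi/\sqrt{\kcurv}$ by the definition \eqref{eq:rstar} of $r^\ast$, we have $\sqrt{\kcurv}\,d_{max} \leq \pi$, so the bound applies and $\mu_{max}^d(d_{max}) = 2$.

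With $\mu_{max}^d = 2$ in hand, Theorem~\ref{thm:boundsRiemannianSumofPairs} gives $\mu_{max} = 2\Deg$, hence the admissible range from Theorem~\ref{thm:localimprovement} is $\varepsilon \in \left(0, 2(2\Deg)^{-1}\right) = \left(0, \Deg^{-1}\right)$, which is exactly the claim. The only real work is the elementary monotonicity estimate $\theta\cot(\theta/2) \leq 2$ on $(0,\pi)$, which I expect to be the main (and only mildly delicate) obstacle; everything else is bookkeeping. I would note in passing that this recovers the familiar Euclidean step-size rule $\varepsilon \leq 1/\Deg$ of \eqref{eq:Euclidean-protocol}, consistent with the fact that the protocol reduces to Euclidean consensus when $\M = \real{}$.
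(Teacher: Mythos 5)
Your proof is correct and follows essentially the same route as the paper: specializing the bound of Theorem~\ref{thm:boundhessiandistance} to $\delta=\Delta=\kcurv\geq 0$ to get $\mu_{max}^d=2$ (the paper states this in the remark following that theorem, noting the bound is sharp and distance-independent in this case), then applying Theorem~\ref{thm:boundsRiemannianSumofPairs} and Theorem~\ref{thm:localimprovement} to obtain $\varepsilon\in\left(0,\Deg\inverse\right)$. Your explicit verification that $\theta\cot(\theta/2)\leq 2$ on $(0,\pi)$, with $\theta=\sqrt{\kcurv}\,d_{max}\leq\pi$ guaranteed by $d_{max}\leq 2r^\ast$, correctly fills in the elementary estimate the paper leaves implicit.
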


This tells us that the bound for the Euclidean case can be applied also for the case of manifolds with positive constant curvature (such as the sphere and $SO(3)$). In other cases (\eg for manifolds of negative curvature) we need to reduce $\varepsilon$ according to the maximum distance between the states of two neighboring nodes.

Note that we can devise distributed methods to compute a common $\varepsilon$ at each node. The maximum degree $\Deg$ can be computed in a distributed way by using a consensus-like algorithm where each node initializes its state with its own degree and repeately updates its estimate by taking the maximum of the estimates in the local neighborhood \cite{Cortes:Automatica08}. Bounds on the maximum distance can be precomputed in the case of compact manifold or, otherwise, they can  be computed in a distributed way by using consensus to estimate the value of the cost function for the measurements $\varphi(\vct{u})$ and then use ideas similar to the ones we will see in Theorem \ref{thm:globalconvergence}.

We can now establish the first result on the convergence of our consensus protocol.
\begin{thm}\label{thm:critialconvergence}
If the assumptions of Theorem \ref{thm:localimprovement} hold for any iteration $l$, then any cluster point of the sequence $\{\vct{x}(l)\} \in \calE_{\M^N}(d_{max})$ generated by \eqref{eq:Riemann-protocol} is a critical point of $\varphi$ in $\calE_{\M^N}(d_{max})$.
\end{thm}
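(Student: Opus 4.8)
The plan is to combine the per-iteration cost decrease from Theorem~\ref{thm:localimprovement} with a standard accumulation-point argument from gradient-descent theory. The key point is that the sequence $\{\vct{x}(l)\}$ lives in $\calE_{\M^N}(d_{max})$, the cost $\varphi$ is bounded below by $0$ (each summand $\frac{1}{2}d^2(x_i,x_j)$ is non-negative, as in Proposition~\ref{lm:globalmincost}), and Theorem~\ref{thm:localimprovement} gives $\varphi(\vct{x}(l+1))\leq\varphi(\vct{x}(l))$ with strict inequality unless $\vct{x}(l)$ is stationary. Hence $\{\varphi(\vct{x}(l))\}$ is a non-increasing sequence bounded below, so it converges to some limit $\varphi^\ast\geq 0$, and in particular $\varphi(\vct{x}(l))-\varphi(\vct{x}(l+1))\to 0$.

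First I would fix a cluster point $\vct{x}^\ast$ and a subsequence $\{\vct{x}(l_k)\}\to\vct{x}^\ast$. By continuity of $\varphi$, $\varphi(\vct{x}^\ast)=\varphi^\ast$. Suppose, for contradiction, that $\vct{x}^\ast$ is not a critical point of $\varphi$ in $\calE_{\M^N}(d_{max})$; then $\grad_{\vct x}\varphi(\vct{x}^\ast)\neq 0$. The next step is to quantify the cost decrease at $\vct{x}^\ast$: applying Theorem~\ref{thm:boundsRiemannian} (equivalently, the quadratic bound of Lemma~\ref{lm:quadraticbound} applied to $\tilde\varphi(t)=\varphi(\exp_{\vct{x}^\ast}(-t\grad\varphi(\vct{x}^\ast)))$), one gets, for the fixed step size $\varepsilon\in(0,2\mu_{max}^{-1})$,
\begin{equation}
\varphi\bigl(\exp_{\vct{x}^\ast}(-\varepsilon\grad_{\vct x}\varphi(\vct{x}^\ast))\bigr)\leq \varphi(\vct{x}^\ast)-\bigl(\varepsilon-\tfrac{1}{2}\mu_{max}\varepsilon^2\bigr)\norm{\grad_{\vct x}\varphi(\vct{x}^\ast)}^2 =: \varphi(\vct{x}^\ast)-c,
\end{equation}
where $c>0$ since the coefficient $\varepsilon-\frac{1}{2}\mu_{max}\varepsilon^2$ is positive on $(0,2\mu_{max}^{-1})$. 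Then I would use continuity of the map $\vct{z}\mapsto\varphi(\exp_{\vct z}(-\varepsilon\grad_{\vct x}\varphi(\vct z)))$ near $\vct{x}^\ast$ — valid because $\exp$, $\log$, $\grad\varphi$ and $\varphi$ are all smooth on a neighborhood of $\vct{x}^\ast$ inside the open set $\calE_{\M^N}(d_{max})$ — to conclude that for $k$ large, $\varphi(\vct{x}(l_k+1))=\varphi(\exp_{\vct{x}(l_k)}(-\varepsilon\grad\varphi(\vct{x}(l_k))))\leq \varphi(\vct{x}(l_k))-\frac{c}{2}$. This contradicts $\varphi(\vct{x}(l))-\varphi(\vct{x}(l+1))\to 0$, so $\vct{x}^\ast$ must be critical.

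The main obstacle, and the one subtlety to handle carefully, is that Theorem~\ref{thm:localimprovement} (and Theorem~\ref{thm:boundsRiemannian}) only guarantee the cost decrease when the entire geodesic segment $\exp_{\vct{x}(l)}(-t\grad\varphi(\vct{x}(l)))$ for $t\in(0,2\mu_{max}^{-1})$ stays in $\calE_{\M^N}(d_{max})$ — this is exactly the standing hypothesis ``the assumptions of Theorem~\ref{thm:localimprovement} hold for any iteration $l$'' in the statement, so it is assumed, not proved here. What still needs a brief argument is that the same geodesic-containment property holds \emph{at the cluster point} $\vct{x}^\ast$, so that the quantitative decrease inequality above is legitimate there; this follows either from taking limits of the containment condition (the set $\calE_{\M^N}(d_{max})$ is open, so one should be slightly careful — passing to the limit gives containment in the closure, but since $\vct{x}^\ast\in\calE_{\M^N}(d_{max})$ and the decrease inequality only needs the segment up to $t=\varepsilon<2\mu_{max}^{-1}$ with a margin, a continuity/openness argument closes the gap), or one simply notes that the strict-decrease conclusion we actually invoke at $\vct{x}^\ast$ requires only that the single point $\exp_{\vct{x}^\ast}(-\varepsilon\grad\varphi(\vct{x}^\ast))$ lie in $\calE_{\M^N}(d_{max})$, which again follows by continuity from the iterates' containment. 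Everything else is the routine monotone-convergence plus continuity packaging.
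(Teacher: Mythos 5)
Your overall strategy (monotone decrease of $\varphi$ plus a contradiction at a non-critical cluster point) is a legitimate variant of the paper's argument, which instead telescopes the per-iteration decrease to show that $\sum_{l}\norm{\grad_{\vct{x}}\varphi(\vct{x}(l))}^2$ converges, hence $\grad_{\vct{x}}\varphi(\vct{x}(l))\to 0$ along the whole sequence, and then concludes by continuity of the gradient that every cluster point is critical. However, your execution has one step that does not go through as written: to obtain the definite decrease $\varphi\bigl(\exp_{\vct{x}^\ast}(-\varepsilon\grad_{\vct{x}}\varphi(\vct{x}^\ast))\bigr)\leq\varphi(\vct{x}^\ast)-c$ you invoke Lemma~\ref{lm:quadraticbound}/Theorem~\ref{thm:boundsRiemannian} at $\vct{x}^\ast$, and this requires the Hessian bound $\mu_{max}$ to hold along the \emph{entire} geodesic segment $\exp_{\vct{x}^\ast}(-t\grad_{\vct{x}}\varphi(\vct{x}^\ast))$, $t\in[0,\varepsilon]$. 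The standing hypothesis of Theorem~\ref{thm:localimprovement} gives such containment only at the iterates $\vct{x}(l)$, not at the limit point, and your fallback claim that ``only the single endpoint $\exp_{\vct{x}^\ast}(-\varepsilon\grad_{\vct{x}}\varphi(\vct{x}^\ast))$ needs to lie in $\calE_{\M^N}(d_{max})$'' is false: the second-derivative bound is needed pointwise on all of $[0,\varepsilon]$, not just at the endpoint. Your other patch (passing the containment to the limit) only yields containment in the closure of $\calE_{\M^N}(d_{max})$, where the stated Hessian bound is not automatic (e.g., if $d_{max}=2r^\ast=\inj\M$, smoothness of the squared distance up to the boundary is not guaranteed), so as written that route is not closed either.

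The gap is easily repaired, and the repair essentially collapses your proof into the paper's: apply the per-iteration inequality at the iterates themselves, where the assumptions do hold, namely $\varphi(\vct{x}(l_k))-\varphi(\vct{x}(l_k+1))\geq\bigl(\varepsilon-\tfrac{1}{2}\mu_{max}\varepsilon^2\bigr)\norm{\grad_{\vct{x}}\varphi(\vct{x}(l_k))}^2$, and let $k\to\infty$: the left-hand side tends to $0$ by your monotone-convergence observation, while the right-hand side tends to $\bigl(\varepsilon-\tfrac{1}{2}\mu_{max}\varepsilon^2\bigr)\norm{\grad_{\vct{x}}\varphi(\vct{x}^\ast)}^2$ by continuity of the gradient at $\vct{x}^\ast\in\calE_{\M^N}(d_{max})$, an open set on which $\varphi$ is smooth. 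Since $\varepsilon-\tfrac{1}{2}\mu_{max}\varepsilon^2>0$ for $\varepsilon\in(0,2\mu_{max}^{-1})$, this forces $\grad_{\vct{x}}\varphi(\vct{x}^\ast)=0$, with no need for any geodesic containment at $\vct{x}^\ast$ or for continuity of the one-step update map.
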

\begin{proof}
We use a fairly standard argument. For any given iteration $l$, define $\tilde{\varphi}(t)=\varphi\bigl(\vct{\gamma}_{\vct{x(l)}}(t)\bigr)$, where $\vct{\gamma}_{\vct{x(l)}}(t)=\exp_{x(l)}\bigl(-t\grad_{\vct{x}}\varphi(\vct{x}(l))\bigr)$. Note that $\vct{x}(l+1)=\vct{\gamma}_{\vct{x}(l)}(\varepsilon)$, and $\vct{x}(l+1)\in \calE_{\M^N}(d_{max})$ by assumption. Also, $\dot{\tilde{\varphi}}(0)=-\norm{\grad_{\vct{x}} \varphi(\vct{x}(l))}^2$ and $\ddot{\tilde{\varphi}}(0)\leq\mu_{max}\norm{\grad_{\vct{x}} \varphi(\vct{x}(l))}^2$, because, by assumption $\vct{x}(l)\in \calE_{\M^N}(d_{max})$. Using Lemma 2 with $\tilde{\varphi}(t)$, we have 
\begin{equation}
 \varphi\bigl(\vct{x}(l+1)\bigr)\leq \varphi\bigl(\vct{x}(l)\bigr) - \norm{\grad_{\vct{x}} \varphi\bigl(\vct{x}(l)\bigr)}^2 \varepsilon + \frac{\mu_{max}\norm{\grad_{\vct{x}} \varphi(\vct{x}(l))}^2}{2} \varepsilon^2,
\end{equation}
when $\varepsilon$ is admissible. From this we can derive
\begin{equation}
 \varphi\bigl(\vct{x}(l)\bigr)-\varphi\bigl(\vct{x}(l+1)\bigr)
\geq    \frac{\norm{\grad_{\vct{x}} \varphi\bigl(\vct{x}(l)\bigr)}^2}{\mu_{max}}\left(c-\frac{c^2}{2}\right),
\end{equation}
where we define $c=\mu_{max}\varepsilon$.
Note that the RHS of the inequality is strictly positive, because $\varepsilon \in (0,2\mu_{max}\inverse)$ and $c\in(0,2)$. Next, since $\varphi$ is bounded below and our algorithm decreases its value at each step, we have the relation
\begin{equation}
 \frac{2c-c^2}{2\mu_{max}}\sum_{l=0}^{L}\norm{\grad_{\vct{x}} \varphi(\vct{x}(l))}^2 
\leq\!\!\sum_{l=0}^{L} \varphi(\vct{x}(l))-\varphi(\vct{x}(l+1)) 
= \varphi(\vct{x}(0))-\varphi(\vct{x}(l+1))<\infty
\end{equation}
for all $L\in\mathbb{N}$. From this argument we deduce that the series $\sum_{l=0}^{\infty}\norm{\grad_{\vct{x}} \varphi\bigl(\vct{x}(l)\bigr)}^2$ converges, $\displaystyle\lim_{l\to\infty}\norm{\grad_{\vct{x}} \varphi\bigl(\vct{x}(l)\bigr)}^2 =0$  and therefore 
the gradient vanishes
, \ie $\displaystyle \lim_{l\to\infty}\grad_{\vct{x}} \varphi(\vct{x}(l))=0$. Since $\varphi$ is continuous, this means that any cluster point of the sequence $\vct{x}(l)$ is a critical point of $\varphi$.
\end{proof}

Notice that Therorem~\ref{thm:critialconvergence} is not a complete convergence results, because it assumes that the iterates do not leave the set $\calE_{\M^N}(d_{max})$ and it does not ensure convergence to the consensus sub-manifold. These problems are going to be addressed in the next section.

\subsection{Local convergence to the consensus sub-manifold}
\label{sec:convergenceset}
This section shows that there exists a set $\Sset_{conv}\subset \Sset$ such that the algorithm converges to the set of global minimizers from any initialization in $\Sset_{conv}$. 
\begin{thm}\label{thm:globalconvergence}
 Let $D=\diam(G)$ denote the diameter of the network graph $G$ and define $\Sset_{conv}=\{\vct{x} \in \M^N: \varphi(\vct{x})<\frac{(r^\ast)^2}{2D}\}$. Then, $\Sset_{conv}\subseteq \Sset$. Moreover, if the consensus protocol \eqref{eq:Riemann-protocol} is initialized with measurements $\vct{u}\in \Sset_{conv}$ and $\varepsilon$ is admissible, then $\vct{x}(l)$ converges to $\calD$.
\end{thm}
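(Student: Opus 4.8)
\textbf{Plan overview.} The plan is to first establish the set inclusion $\Sset_{conv}\subseteq\Sset$ by a triangle-inequality/diameter argument, then use the cost-decrease property (Theorem~\ref{thm:localimprovement}) together with the fact that $\varphi$ is non-increasing along the iterates to show the whole trajectory stays inside $\Sset$ (in fact inside a sublevel set of $\varphi$), and finally invoke Theorem~\ref{thm:critialconvergence} and Theorem~\ref{th:Riemann-globalminset} to conclude that any cluster point is a critical point lying in $\Sset$, hence in $\calD$. A compactness remark upgrades ``cluster points in $\calD$'' to genuine convergence to $\calD$.

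\textbf{Step 1: $\Sset_{conv}\subseteq\Sset$.} Fix $\vct{x}$ with $\varphi(\vct{x})<\frac{(r^\ast)^2}{2D}$. I would take $y=x_{i_0}$ for some fixed node $i_0$ and bound $d(x_i,x_{i_0})$ for every $i$. For any $i$, choose a path $i_0=j_0,j_1,\dots,j_m=i$ in $G$ of length $m\le D$. By the triangle inequality $d(x_i,x_{i_0})\le\sum_{k=1}^m d(x_{j_{k-1}},x_{j_k})$, and since each edge term satisfies $d^2(x_{j_{k-1}},x_{j_k})\le 2\varphi(\vct{x})$, we get $d(x_{j_{k-1}},x_{j_k})\le\sqrt{2\varphi(\vct{x})}$. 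Hence $d(x_i,x_{i_0})\le D\sqrt{2\varphi(\vct{x})} < D\sqrt{2\cdot\frac{(r^\ast)^2}{2D}} = \sqrt{D}\,r^\ast$. This gives $\max_i d(x_i,x_{i_0})<\sqrt{D}\,r^\ast$; a slightly more careful accounting (using the $\ell_1$--$\ell_2$ bound $\sum_{k}d(x_{j_{k-1}},x_{j_k})\le\sqrt{m}\,(\sum_k d^2)^{1/2}\le\sqrt{D}\,\sqrt{2\varphi(\vct{x})}$, which still needs to land below $r^\ast$) is what actually forces the constant $\frac{(r^\ast)^2}{2D}$ in the definition; I would chase the constants to confirm $\max_i d(x_i,x_{i_0})<r^\ast$, which is exactly $\vct{x}\in\Sset$.

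\textbf{Step 2: invariance of the sublevel set.} Since $\vct{u}\in\Sset_{conv}$, we have $\varphi(\vct{x}(0))=\varphi(\vct{u})<\frac{(r^\ast)^2}{2D}$. I want to argue inductively that $\vct{x}(l)\in\Sset_{conv}$ for all $l$, i.e. the sublevel set $\{\varphi<\frac{(r^\ast)^2}{2D}\}$ is forward-invariant. The subtlety is that Theorem~\ref{thm:localimprovement} requires the geodesic segment $\exp_{\vct{x}(l)}(-t\grad\varphi)$ to stay in $\calE_{\M^N}(d_{max})$ for $t\in(0,2\mu_{max}^{-1})$ before one may conclude the cost decreases. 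So the induction must be set up to propagate two things simultaneously: that $\vct{x}(l)$ is in the sublevel set, and that the relevant geodesic segment does not exit $\calE_{\M^N}(d_{max})$. I would argue that on the sublevel set all pairwise neighbor distances are bounded well below $2r^\ast$, and because $\Sset_{conv}\subseteq\Sset\subseteq\calE_{\M^N}(2r^\ast)$ is an \emph{open} sublevel set with a definite margin, the short gradient step (length controlled by $\varepsilon<2\mu_{max}^{-1}$ and by $\norm{\grad\varphi(\vct{x}(l))}$, which is itself controlled by $\varphi(\vct{x}(l))$) cannot leave $\calE_{\M^N}(d_{max})$; then Theorem~\ref{thm:localimprovement} applies and $\varphi(\vct{x}(l+1))\le\varphi(\vct{x}(l))<\frac{(r^\ast)^2}{2D}$, closing the induction. \emph{This margin/invariance bookkeeping is the main obstacle}: one must certify that the discrete step never overshoots out of the good region, using only the cost bound, $\mu_{max}$, and $\varepsilon$'s admissibility.

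\textbf{Step 3: conclusion.} Once the trajectory is confined to $\Sset_{conv}\subseteq\Sset\subseteq\calE_{\M^N}(d_{max})$, the hypotheses of Theorem~\ref{thm:critialconvergence} hold at every iteration, so $\grad_{\vct{x}}\varphi(\vct{x}(l))\to 0$ and every cluster point of $\{\vct{x}(l)\}$ is a critical point of $\varphi$ contained in $\overline{\Sset_{conv}}$. By Theorem~\ref{th:Riemann-globalminset}, any critical point in $\Sset$ lies in $\calD$; I would handle the boundary by noting the trajectory stays in a compact sublevel set $\{\varphi\le\varphi(\vct{u})\}$ strictly inside $\Sset$ (compactness from completeness of $\M$ plus the uniform diameter bound of Step 1, which makes the sublevel set closed and bounded, hence compact if $\M$ is; otherwise restrict to the compact ``box'' $\prod_i \overline{\calB_\M(x_{i_0},r^\ast)}$ intersected with the sublevel set). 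Therefore all cluster points lie in $\calD$, and since $\varphi(\vct{x}(l))$ decreases to the infimum of $\varphi$ over the (connected component of the) sublevel set, which is $0$, we get $\varphi(\vct{x}(l))\to 0$, and by the distance bound this forces $d(\vct{x}(l),\calD)\to 0$, i.e. $\vct{x}(l)$ converges to $\calD$.
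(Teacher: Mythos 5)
Your Step 1 (after the sharper $\ell_1$--$\ell_2$ accounting, which does close: $\sum_k d(x_{j_{k-1}},x_{j_k})\le\sqrt{D}\,\sqrt{2\varphi(\vct{x})}<\sqrt{D}\cdot r^\ast/\sqrt{D}=r^\ast$) and your Step 3 follow the paper's route. The genuine gap is in Step 2, which you yourself flag as "the main obstacle" but resolve with a mechanism that does not work as sketched. There is no "definite margin" for the sublevel set in terms of the cost: a point of $\Sset_{conv}$ can have $\varphi$ arbitrarily close to the threshold $\frac{(r^\ast)^2}{2D}$, so you cannot argue that a short step stays below the threshold by a margin argument on $\varphi$. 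The alternative distance-based bookkeeping you gesture at (bounding $\varepsilon\norm{\grad_{x_i}\varphi}$, or rather the displacement over the whole segment $t\in(0,2\mu_{max}^{-1})$, against the slack between $r^\ast$ and $d_{max}$) is never carried out, and naive bounds of this type do not obviously suffice uniformly (e.g.\ for small-diameter graphs the displacement bound one gets this way is comparable to $r^\ast$ itself). The paper closes this step with a soft first-crossing/continuity argument that needs no quantitative margin: let $t_0$ be the first $t$ at which $\varphi\bigl(\exp_{\vct{x}(l)}(-t\grad_{\vct{x}}\varphi)\bigr)$ would equal $\frac{(r^\ast)^2}{2D}$; for all $t<t_0$ the curve lies in $\Sset_{conv}\subseteq\Sset\subseteq\calE_{\M^N}(d_{max})$, where the Hessian bound and hence the quadratic decrease estimate are valid, so $\varphi$ along the curve stays strictly below $\varphi(\vct{x}(l))<\frac{(r^\ast)^2}{2D}$; continuity at $t_0$ then yields a contradiction. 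Thus the entire geodesic segment for $t\in(0,2\mu_{max}^{-1})$ remains in $\Sset_{conv}$, Theorem~\ref{thm:localimprovement} applies, $\vct{x}(l+1)\in\Sset_{conv}$, and induction confines the whole trajectory. Your proposal needs this (or an actual worked-out quantitative bound) to be a proof; as written the key invariance step is asserted, not established.

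Two smaller remarks. In Step 3, "$\varphi(\vct{x}(l))$ decreases to the infimum of $\varphi$ over the sublevel set, which is $0$" is not automatic — a decreasing sequence converges to its own infimum, not to $\inf\varphi$; you need the cluster-point argument (which you do supply via Hopf--Rinow compactness of closed bounded sets) to force the limit value to $0$. Also note that the compact set you build is centered at $x_{i_0}$, which moves with $l$; the paper itself is terse here and simply concludes from Theorem~\ref{thm:critialconvergence} and Theorem~\ref{th:Riemann-globalminset} that every cluster point is a critical point in $\Sset$ and hence lies in $\calD$, so on this final point your write-up is no weaker than the paper's.
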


\begin{proof}
  Consider any $p,q \in V$ and consider a shortest path in the graph $\{i_k\}_{k=0}^K$ from $i_0=p$ to $i_K=q$.  We will use this path to bound the geodesic distance between states $x_p$ and $x_q$ with the cost $\varphi$. Using the triangular and Jensen's inequalities, and the fact that $K\leq D$, we have:
\begin{multline}
 d^2(x_p,x_q)\leq \left(\sum_{k=0}^{K-1} d(x_{i_k},x_{i_{k+1}})\right)^2 \leq K\sum_{k=0}^{K-1} d^2(x_{i_k},x_{i_{k+1}})
\leq K \sum_{\{i,j\}\in E}d^2(x_i,x_j)\leq 2D\varphi(\vct{x}).
\end{multline}
This shows that if $\vct{x} \in \Sset_{conv}$, then $\varphi(\vct{x})<\frac{r^\ast}{2D}$ and $d(x_p,x_q)<r^\ast$, for any $p,q\in V$. This means that $\vct{x} \in \Sset$.
Next, we show that if $\vct{x}(l)\in \Sset_{conv}$, then $\vct{\gamma}_{\vct{x}(l)}(t)=\exp_{\vct{x}(l)}-t \grad_{\vct{x}}\varphi(\vct{x}(l)) \in \Sset$ for all $t\in \left(0,2\mu_{max}^{-1}(d_{max})\right)$. The basic idea is to show that $\vct{\gamma}_{\vct{x}(l)}$ does not cross the boundary of $\Sset_{conv}$ if $t \in \left(0,2\mu_{max}^{-1}(d_{max})\right)$. By way of contradiction, assume that there exist values of $t \in B$ such that $\varphi(\vct{\gamma}_{\vct{x}(l)}(t))=\frac{(r^\ast)^2}{2D}$ and denote as $t_0$ the minimum of such values. Then $\vct{\gamma}_{\vct{x}(l)}(t)\in \Sset_{conv}\, \forall t \in (0, t_0)$ and, since the upper bound is valid in $\Sset_{conv}$, $\varphi(\vct{\gamma}_{\vct{x}(l)}(t))<\varphi(\vct{\gamma}_{\vct{x}(l)}(0))\,\forall t\in (0, t_0)$. However, by continuity of $\varphi$, 
there must exist $\eta, \nu>0$ arbitrarily small such that $\varphi(\vct{\gamma}_{\vct{x}(l)}(t_0-\nu))\geq  \frac{(r^\ast)^2}{2D} - \eta\geq \varphi(\vct{\gamma}_{\vct{x}(l)}(0))$, which gives a contradiction.
Finally, we show that the algorithm converges to the set of global minimizers. Since $\vct{x}(0)\in \Sset_{conv}$ and $\varphi$ is decreased at each iteration, the sequence $\{\vct{x}(l)\}$ generated the protocol will be guaranteed to be in $\Sset$. From this and Theorem \ref{thm:critialconvergence}, any cluster point of the sequence $\vct{x}(l)$ will be a critical point in $\Sset$, which must be a global minimizer.
\end{proof}

Note that we have shown convergence to a set and not to a single point. Moreover, the conditions on the initial measurements depend on the size of the network. 
However, in practice, the experiments in \S\ref{sc:experiments} show convergence to a single global minimizer under much more relaxed conditions. We can give stronger versions of Theorem~\ref{thm:globalconvergence} by making additional assumptions on the manifolds and on the network topology, as we will show next.


\subsection{Special cases of global convergence to the consensus sub-manifold}
\label{sec:convergenceGlobal}
In general, the basin of attraction given by Theorem~\ref{thm:globalconvergence} can be quite small, because it depends on the diameter of the network, which might be large. Nevertheless, this condition can be relaxed for particular manifolds and network topologies.
For instance, the following is a special case for Theorem~\ref{thm:globalconvergence}.
\begin{corollary}\label{cor:rstarinfty}
  If $r^\ast=\infty$ and $\varepsilon$ is admissible, then the iterates $\vct{x}(l)$ from the consensus protocol \eqref{eq:Riemann-protocol} converge to $\calD$ for any set of initial measurements $\vct{u}$.
\end{corollary}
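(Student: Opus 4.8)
The plan is to show that the hypothesis $r^\ast=\infty$ collapses all the constraint sets appearing in Theorem~\ref{thm:globalconvergence} and its prerequisites, so that the local result becomes global. First I would observe that if $r^\ast=\infty$, then $\Sset=\M^N$ by its definition \eqref{eq:setdef}: for any $\vct{x}\in\M^N$ one may simply take $y=x_1$ (say) and the condition $\max_i d(x_i,y)<r^\ast=\infty$ holds vacuously. Likewise, $\calE_{\M^N}(d_{max})=\M^N$ once we are allowed to take $d_{max}\to\infty$ (note that $d_{max}\leq 2r^\ast$ is automatically satisfied), and $\Sset_{conv}=\{\vct{x}:\varphi(\vct{x})<\tfrac{(r^\ast)^2}{2D}\}=\M^N$ as well, since the threshold $\tfrac{(r^\ast)^2}{2D}$ is $+\infty$. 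Hence the initial measurements $\vct{u}$ lie in $\Sset_{conv}$ for free, and the set $\calE_{\M^N}(d_{max})$ in which the iterates are required to stay is the whole manifold, so the reachability assumptions of Theorem~\ref{thm:localimprovement} and Theorem~\ref{thm:critialconvergence} are trivially met at every iteration.

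Next I would invoke the chain of already-established results. Since $\varepsilon$ is admissible by hypothesis, Theorem~\ref{thm:localimprovement} gives $\varphi(\vct{x}(l+1))\leq\varphi(\vct{x}(l))$ at every step, with equality only at stationary points; in particular $\varphi(\vct{x}(l))$ is non-increasing and bounded below by $0$. Theorem~\ref{thm:critialconvergence} then applies (its hypotheses hold for all $l$, as noted above), so $\grad_{\vct{x}}\varphi(\vct{x}(l))\to 0$ and every cluster point of $\{\vct{x}(l)\}$ is a critical point of $\varphi$. Finally, by Theorem~\ref{th:Riemann-globalminset} any critical point lying in $\Sset=\M^N$ must belong to $\calD$; since every cluster point is such a critical point, every cluster point lies in $\calD$. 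Because $\varphi$ is continuous and equals $0$ exactly on $\calD$, the monotone sequence $\varphi(\vct{x}(l))$ converges to $0$, which is precisely the statement that $\vct{x}(l)$ converges to the consensus sub-manifold $\calD$.

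The only genuinely delicate point — and the one I would be most careful about — is the notion of convergence to a \emph{set} rather than to a single point: one must phrase the conclusion as $d(\vct{x}(l),\calD)\to 0$ (equivalently $\varphi(\vct{x}(l))\to 0$), not as convergence of the sequence itself, exactly as in Theorem~\ref{thm:globalconvergence}. A secondary subtlety is making the ``$d_{max}\to\infty$'' identification rigorous when $\M$ is unbounded: strictly speaking one should note that for each fixed run of the algorithm the distances $d(x_i(l),x_j(l))$ are finite and, by the monotonicity of $\varphi$, uniformly bounded by a constant depending only on $\varphi(\vct{u})$, so one may pick any finite $d_{max}$ exceeding that bound and the iterates never leave $\calE_{\M^N}(d_{max})$; with $r^\ast=\infty$ the step-size admissibility and all earlier hypotheses then hold on this $\calE_{\M^N}(d_{max})$. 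Everything else is a direct citation of the preceding theorems, so no new computation is required.
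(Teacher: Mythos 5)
Your argument is correct and is essentially the paper's: the corollary is stated there as an immediate special case of Theorem~\ref{thm:globalconvergence}, since $r^\ast=\infty$ makes the threshold $\frac{(r^\ast)^2}{2D}$ infinite and hence $\Sset_{conv}=\Sset=\M^N$, so every initialization qualifies. Your additional remarks on choosing a finite $d_{max}$ via the monotonicity of $\varphi$ and on interpreting convergence to $\calD$ as a set (through cluster points / $\varphi(\vct{x}(l))$) only make explicit what the paper leaves implicit, at the same level of rigor as its proof of Theorem~\ref{thm:globalconvergence}.
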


This corollary can be used for $\real{n}$ and some other manifolds with non-positive curvature, and it guarantees global convergence for any graph $G$.
On the other hand, if $G$ has linear topology (\ie it is a tree with a single branch), the following is true for any manifold $\M$.

\begin{thm}\label{thm:globalconvergencelinear}
 Assume $G$ has linear topology, and the consensus protocol \eqref{eq:Riemann-protocol} is initialized with measurements $\vct{u}\in\calE_{\M^N}(\inj\M)$, where $\calE$ is defined in \eqref{eq:calE}. Then $\vct{x}(l)$ converges to $\calD$. 
\end{thm}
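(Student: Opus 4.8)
The plan is to combine three ingredients already available: (i) Theorem~\ref{thm:minset-tree}, which says that on a tree (and linear topology is a special tree) every stationary point of $\varphi$ is a consensus configuration; (ii) Theorem~\ref{thm:critialconvergence}, which says that under an admissible step size, if the iterates stay in a set $\calE_{\M^N}(d_{max})$ on which the Hessian bound holds, then cluster points are critical points; and (iii) a forward-invariance argument showing that, with linear topology, the iterates never leave $\calE_{\M^N}(\inj\M)$. The key point is that for a path graph each node has at most two neighbors, so $\Deg=2$, and moreover the coupling between neighboring pairs is weak enough that one can track the pairwise distances $d(x_i(l),x_{i+1}(l))$ directly rather than controlling the global cost $\varphi$ as in Theorem~\ref{thm:globalconvergence}.

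First I would set up the invariance. Choose $d_{max} = \max_{\{i,j\}\in E} d(u_i,u_j) < \inj\M$, which is possible since $\vct{u}\in\calE_{\M^N}(\inj\M)$ and $E$ is finite; pick $\varepsilon$ admissible for this $d_{max}$, i.e. $\varepsilon\in(0,2\mu_{max}^{-1})$ with $\mu_{max}=2\,\mu_{max}^d(d_{max})$ via Theorems~\ref{thm:boundsRiemannianSumofPairs} and \ref{thm:boundhessiandistance}. The core claim is that if $\vct{x}(l)\in\calE_{\M^N}(d_{max})$ then $\vct{x}(l+1)\in\calE_{\M^N}(d_{max})$. To see this, fix an edge $\{i,i+1\}$ and consider the update of the two endpoints. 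Because $d(x_i(l),x_{i+1}(l))<\inj\M$, the geodesic between them is unique and minimizing; the step taken at node $i$ includes the term $\varepsilon\log_{x_i}(x_{i+1})$, which points exactly along that geodesic toward $x_{i+1}$, and symmetrically at $i+1$. The remaining term at $i$ (the log toward its other neighbor $x_{i-1}$) is a ``perturbation.'' Using Lemma~\ref{thm:derdistancepair}-type first-variation estimates together with the convexity of balls of radius $\le r^\ast$, one shows that for $\varepsilon$ in the admissible range the new pairwise distance $d(x_i(l+1),x_{i+1}(l+1))$ does not exceed $d_{max}$. The cleanest way to organize this: apply Theorem~\ref{thm:localimprovement} along the gradient geodesic $\vct\gamma_{\vct{x}(l)}(t)$ and note that if $\vct\gamma$ left $\calE_{\M^N}(d_{max})$ for some $t\in(0,\varepsilon]$, there would be a first such $t_0$, and at $t_0$ the cost would still be strictly below $\varphi(\vct{x}(l))$ by the improvement estimate, contradicting that some pairwise distance has grown back up to $d_{max}$ — the same boundary-crossing contradiction used in the proof of Theorem~\ref{thm:globalconvergence}, but now the ``barrier'' is the per-edge distance constraint defining $\calE$ rather than the global cost level set.

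Once invariance is established, the rest is immediate: the hypotheses of Theorem~\ref{thm:critialconvergence} hold for every $l$, so every cluster point $\vct{x}^\ast$ of $\{\vct{x}(l)\}$ (which exists when $\M$ is, say, such that the iterates stay in a compact set; more carefully one argues $\lim_l \grad_{\vct x}\varphi(\vct x(l))=0$ directly from the summability argument in that proof) is a critical point of $\varphi$ lying in the closure of $\calE_{\M^N}(d_{max})$. Since $G$ has linear topology, Theorem~\ref{thm:minset-tree} applies, so $\vct{x}^\ast\in\calD$; as $\varphi$ decreases monotonically to its infimum and $\varphi|_\calD=0$, the whole sequence converges to $\calD$. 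The main obstacle is genuinely the invariance step: proving that the ``cross term'' from the third neighbor cannot inflate a pairwise distance past $\inj\M$. I expect this requires a careful second-order (or Rauch-comparison) estimate on how $d(x_i,x_{i+1})$ changes under the simultaneous two-endpoint update, exploiting that the path structure means the perturbing term at $i$ is itself a log toward a point whose distance to $x_i$ is also $<\inj\M$, so all quantities live inside one convex ball; everything after that is bookkeeping with results already proven.
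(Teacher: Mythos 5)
Your overall skeleton matches the paper's: establish forward invariance of a set on which $\varphi$ stays differentiable and the Hessian bound applies, then invoke Theorem~\ref{thm:critialconvergence} to get convergence to a critical point and Theorem~\ref{thm:minset-tree} to identify that critical point with a consensus configuration. The gap is in the one step that actually carries the weight: the invariance of $\calE_{\M^N}(d_{max})$. The boundary-crossing contradiction you import from Theorem~\ref{thm:globalconvergence} does not transfer. In that theorem the barrier is itself a sub-level set of $\varphi$, so hitting the barrier means the cost has climbed back up to the threshold value, which contradicts the monotone decrease guaranteed by Theorem~\ref{thm:localimprovement}. Here your barrier is the per-edge constraint $d(x_i,x_{i+1})=d_{max}$, and there is no contradiction between ``$\varphi$ is strictly below $\varphi(\vct{x}(l))$'' and ``one particular edge length has grown to $d_{max}$'': a single pairwise distance can increase to the threshold while the others shrink enough that the total cost still decreases (indeed $\varphi(\vct{u})\geq\frac{1}{2}d_{max}^2$ with your choice of $d_{max}$, so a decreased cost places no useful cap on any individual term). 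You sense this yourself when you write that the cross term from the third neighbor needs ``a careful second-order (or Rauch-comparison) estimate,'' but that estimate is precisely what is missing, so the invariance claim is unproven and the argument does not close.

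For comparison, the paper proves invariance by a direct, per-edge estimate that exploits the path structure rather than the cost: writing $d_i(l)=d(x_i(l),x_{i+1}(l))$ and $w_i(l)$ for the tangent update at node $i$, it bounds $d_i(l+1)\leq d_i(l)+\norm{w_i(l)}+\norm{w_{i+1}(l)}$ by two applications of the triangle inequality, observes that equality forces the four consecutive states $x_{i-1},x_i,x_{i+1},x_{i+2}$ to lie in order on a single geodesic, and in that extremal configuration computes $\norm{w_i}$ explicitly in terms of the neighboring edge lengths to conclude that $d_i(l+1)$ remains below $\inj\M$ (note it only needs distances to stay below $\inj\M$, not below your tighter $d_{max}$). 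If you want to salvage your write-up, replace the cost-barrier argument with an estimate of this kind — i.e., control each $d_i(l+1)$ directly from $d_{i-1}(l),d_i(l),d_{i+1}(l)$ and the admissible step size — and the remainder of your proof (summability of the gradient norms from Theorem~\ref{thm:critialconvergence}, then Theorem~\ref{thm:minset-tree}) goes through as you describe.
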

\begin{proof}
  The assumptions imply $d(u_i,u_{i+1})<\inj\M$ for any $i\in\{1,\ldots,N-1\}$. We will now show that this same property is also satisfied by all the iterates $\vct{x}(l)$, \ie $\vct{x}(l)\in\calE_{\M^N}(\inj\M)$ for all $l\in\naturals$. For the sake of brevity, we will use the notation $d_i(l)=d(x_i(l),x_{i+1}(l))$, with the convention $d_0=d_N=0$, and $w_i(l)=\frac{\varepsilon}{2}\bigl(\log_{x_i(l)}x_{i-1}+\log_{x_i(l)}x_{i+1})\bigr)$.
By using the triangular inequality twice, we can notice that 
\begin{equation}
     d_i(l+1)=d(\exp_{x_i(l)}w_i(l),\exp_{x_{i+1}(l)}w_{i+1}(l))\leq d_i(l)+\norm{w_i(l)}+\norm{w_{i+1}(l)},
   \end{equation}
 with equality if and only if $\{x_j(l)\}_{j=i-1}^{i+2}$ all lie in order on the same geodesic. In such case, we have $\norm{w_i}=\abs{d_i(l)-d_{i+1}(l)}$ and
\begin{equation}
d_i(l+1)<d_i(l)+\norm{w_i(l)}+\norm{w_{i+1}(l)}
=(1-\varepsilon)d_i(l)-\frac{\varepsilon}{2}\bigl(d_{i-1}(l)+d_{i+1}(l)\bigr)\leq \inj\M.
\end{equation}

This shows that, at any iteration, the distance between any two neighbors will be always less than $\inj\M$, \ie $\varphi$ will always be differentiable at $\vct{x}(l)$. Combining this fact with Theorems~\ref{thm:critialconvergence} and \ref{thm:minset-tree}, we get that $\vct{x}(l)$ converges to $\calD$.
\end{proof}

\subsection{Lack of convergence to the Fr\'echet mean}

As we mentioned in \S\ref{sc:Euclideanconsensus}, when we minimize $\varphi$ in Euclidean consensus, the states converge to a global minimizer which corresponds to the average of the initial measurements.

In the Riemannian case one would expect a similar behavior, where all the states converge to the Fr\'echet mean of the measurements. However, in general this is not the case, as we will see in the experiments in \S\ref{sc:experiments}. Intuitively, this is due to the fact that the Fr\'echet mean of the states is not preserved after each iteration \cite{Tron:ICDSC08} and, even if the algorithm converges to a global minimizer (\eg under the conditions of Theorem~\ref{thm:globalconvergence}), this need not correspond to the desired Fr\'echet mean.

For computing the exact Fr\'echet mean of the measurements in a distributed way, one can extend the \emph{consensus in the tangent space} algorithm from \cite{Tron:ICDSC08} to the case of general manifolds. However, the convergence analysis of that algorithm is out of the scope of this paper.

\section{CONVERGENCE TO A SINGLE CONSENSUS CONFIGURATION FOR SPACES OF CONSTANT, NON-NEGATIVE CURVATURE}
\label{sec:convergencePoint}

In this section we prove local convergence for the specific case of spaces with constant, non-negative curvature. With this additional assumption, we can strengten Theorem~\ref{thm:globalconvergence} under three main aspects.
\begin{enumerate}
\item We enlarge the set of initializations for which convergence is guaranteed from $\Sset_{conv}$ to $\Sset$.
\item We prove convergence to a single point in the consensus sub-manifold.
\item We show that each state converges to a point in the convex hull of the initial measurements.
\end{enumerate}

In the following, we define the convex hull of a set $\calU\subset\M$, $\hull(\calU)$, as the minimal convex subset of $\M$ containing $\calU$. Regarding this definition, we will need the following Lemma.

\begin{lemma}\label{lemma:hullinclusion}
  Let $\calU,\calV\subset\M$ be two sets such that $\calU\subseteq\calV$. Then $\hull(\calU)\subseteq\hull(\calV)$.
\end{lemma}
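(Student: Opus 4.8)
The plan is to use the minimality characterization of the convex hull directly. Recall that $\hull(\calV)$ is defined as the minimal convex subset of $\M$ containing $\calV$, i.e., it is the intersection of all convex sets containing $\calV$, or equivalently the smallest convex set $W$ with $\calV \subseteq W$. First I would observe that $\hull(\calV)$ is a convex set that contains $\calV$, and since $\calU \subseteq \calV$ by hypothesis, it also contains $\calU$ by transitivity of set inclusion: $\calU \subseteq \calV \subseteq \hull(\calV)$.

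Next, since $\hull(\calV)$ is a \emph{convex} set containing $\calU$, and $\hull(\calU)$ is by definition the \emph{minimal} convex set containing $\calU$, the minimality property forces $\hull(\calU) \subseteq \hull(\calV)$. This is really the entire argument: the convex hull operation is monotone with respect to inclusion simply because it is defined as a least element in the partially ordered family of convex supersets.

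I do not anticipate any genuine obstacle here; the statement is a formal consequence of the definition. The only subtlety worth a sentence is to make sure the notion of ``minimal convex subset containing $\calU$'' is well-defined, i.e., that such a minimal set exists — but this is guaranteed because the intersection of an arbitrary family of convex sets in $\M$ is convex (a standard fact, and in any case the paper takes $\hull$ as given), so $\hull(\calU)$ can be realized as $\bigcap \{ W \subseteq \M : W \text{ convex}, \calU \subseteq W \}$. Given that, the inclusion $\hull(\calV) \in \{W : W \text{ convex}, \calU \subseteq W\}$ immediately yields $\hull(\calU) = \bigcap \{W : \ldots\} \subseteq \hull(\calV)$. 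So the proof is a two-line argument invoking only the definition and the monotonicity of ``smallest set'' under enlarging the constraint set.
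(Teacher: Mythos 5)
Your proof is correct: the paper leaves this lemma as an exercise to the reader, and your argument --- observing that $\calU\subseteq\calV\subseteq\hull(\calV)$, that $\hull(\calV)$ is convex, and that $\hull(\calU)$ is by definition the minimal convex set containing $\calU$, hence $\hull(\calU)\subseteq\hull(\calV)$ --- is exactly the intended standard argument.
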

The proof is left as an exercise to the reader.
In order to build the aforementioned stronger result, we start with the following key insight.

\begin{lemma}\label{lemma:hullneighbors}
Assume that $\M$ has constant, non-negative curvature, $\vct{x}(l)\in\Sset$, and $x_i(l+1)$ is computed according to \eqref{eq:Riemann-protocol} with $\varepsilon\in(0,\mu_{max}\inverse]$, where $\mu_{max}$ is a bound on $\hess(\varphi)$ on $\Sset$. Then $x_i(l+1)\in \hull\bigl(\{x_j(l)\}_{j \in N_i\cup \{i\}}\bigr)$.
\end{lemma}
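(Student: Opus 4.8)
The plan is to reduce the claim about the new iterate $x_i(l+1)$ to a statement about a convex combination of logarithm maps, exploiting the special structure of spaces of constant non-negative curvature. First I would write out the update: by \eqref{eq:Riemann-protocol} and \eqref{eq:gradRiemann-cost}, $x_i(l+1)=\exp_{x_i(l)}\bigl(\varepsilon \sum_{j\in N_i}\log_{x_i(l)}(x_j(l))\bigr)$. Since $\vct{x}(l)\in\Sset$, all the points $\{x_j(l)\}_{j\in N_i\cup\{i\}}$ lie in a common geodesic ball of radius $r^\ast$, which is convex; hence $\hull\bigl(\{x_j(l)\}_{j\in N_i\cup\{i\}}\bigr)$ is contained in that ball and is itself convex and (for constant non-negative curvature, by \cite{Afsari:ProcAMS11}) well-behaved under taking geodesic midpoints and, more generally, under the ``weighted mean'' map $v\mapsto \exp_{x_i(l)}\bigl(\sum_k \lambda_k \log_{x_i(l)}(p_k)\bigr)$ for convex weights $\lambda_k\geq0$, $\sum_k\lambda_k\leq 1$.

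The key algebraic step is to recognize the update as exactly such a weighted mean. Writing $n_i=|N_i|$, set $\lambda_j = \varepsilon$ for $j\in N_i$ and $\lambda_i = 1-\varepsilon n_i$, noting that $\log_{x_i(l)}(x_i(l))=0$, so that
\begin{equation}
\varepsilon\sum_{j\in N_i}\log_{x_i(l)}(x_j(l)) = \sum_{j\in N_i\cup\{i\}}\lambda_j\,\log_{x_i(l)}(x_j(l)),\qquad \sum_{j\in N_i\cup\{i\}}\lambda_j=1.
\end{equation}
The weights are non-negative precisely when $\varepsilon n_i\leq 1$, which is guaranteed by $\varepsilon\in(0,\mu_{max}^{-1}]$: indeed, for constant non-negative curvature $\mu_{max}^d=2$ by Theorem~\ref{thm:boundhessiandistance}, so $\mu_{max}=2\,\Deg$ by Theorem~\ref{thm:boundsRiemannianSumofPairs}, and hence $\varepsilon\leq\frac{1}{2\Deg}\leq\frac{1}{2|N_i|}\leq\frac{1}{|N_i|}$. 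So $x_i(l+1)$ is a genuine convex combination (in the sense of the exponential-map weighted mean) of the points $\{x_j(l)\}_{j\in N_i\cup\{i\}}$.

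It then remains to argue that such a weighted mean of points lying in a convex set $C$ (here $C=\hull(\{x_j(l)\}_{j\in N_i\cup\{i\}})$, which lies inside a ball of radius $r^\ast$) stays in $C$. For two points this is essentially the definition of geodesic convexity of $C$ together with the fact that $\exp_{x_i(l)}\bigl(\lambda\log_{x_i(l)}(x_j(l))\bigr)$ traces the minimizing geodesic from $x_i(l)$ to $x_j(l)$. For more than two points I would proceed by induction on $n_i$, peeling off one neighbor at a time: the weighted mean with weights $(\lambda_i,\lambda_{j_1},\dots,\lambda_{j_{n_i}})$ can be realized, using the constant-curvature structure, as a point on the geodesic between $x_i(l)$ and the weighted mean of the remaining points with renormalized weights — this associativity of geodesic weighted means is exactly where constant curvature is used, as it fails for general manifolds. (Alternatively one can cite the characterization in \cite{Afsari:ProcAMS11} that in constant-curvature spaces such means lie in the closed convex hull of the data.) Combining with Lemma~\ref{lemma:hullinclusion} to keep track of inclusions of hulls finishes the argument.

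\textbf{Main obstacle.} The delicate point is justifying that the exponential-map weighted mean based at $x_i(l)$ actually lands in the geodesic convex hull of the data — i.e., the associativity/decomposition of weighted means into iterated geodesic interpolations. This identity is special to spaces of constant curvature (where, after identifying a geodesic triangle with one in the model space $\Sphere{n}$ or $\real{n}$ of the same curvature, the weighted-mean map is literally the model-space computation), and making it precise while staying inside the ball of radius $r^\ast$ where everything is single-valued and convex is the crux of the proof.
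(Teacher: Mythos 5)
Your reduction of the update to a tangent-space convex combination is fine: writing $x_i(l+1)=\exp_{x_i(l)}\bigl(\sum_{j\in N_i\cup\{i\}}\lambda_j\log_{x_i(l)}(x_j(l))\bigr)$ with $\lambda_i=1-\varepsilon|N_i|\geq 0$ is correct, and the weight-nonnegativity check from $\varepsilon\leq\mu_{max}\inverse$ works (indeed for any valid Hessian bound, not just $2\Deg$). The genuine gap is exactly at the step you flag as the crux, and your proposed fix does not close it. The ``associativity'' you invoke is not available: in a space of constant positive curvature, the tangent-space weighted mean $\exp_x\bigl(\mu_1\log_x p_1+\mu_2\log_x p_2\bigr)$ with $\mu_1+\mu_2=1$ is in general \emph{not} a point on the geodesic from $p_1$ to $p_2$ (take $x$ the north pole of $\Sphere{2}$ and $p_1,p_2$ on the equator $90^\circ$ apart: the midpoint-weighted combination has distance $\pi/(2\sqrt{2})$ from $x$, while every point of the geodesic $p_1p_2$ has distance $\pi/2$). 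The only decomposition that is automatic is the trivial rescaling $\exp_x\bigl(\sum_j\lambda_j\log_x p_j\bigr)=\gamma\bigl(\sum_j\lambda_j\bigr)$ along the geodesic $\gamma$ from $x$ toward $q'=\exp_x\bigl(\frac{\sum_j\lambda_j\log_x p_j}{\sum_j\lambda_j}\bigr)$; but then you still must show $q'\in\hull\bigl(\{x_j(l)\}_{j\in N_i\cup\{i\}}\bigr)$, and $q'$ is again a base-$x$ tangent-space mean of \emph{all} the remaining points, so ``peeling off one neighbor at a time'' never reduces the problem to two-point geodesic convexity. Your alternative citation is also off target: \cite{Afsari:ProcAMS11} shows the \emph{minimizer} (the Fr\'echet mean) lies in the hull, which says nothing about where a single exponential-map step based at $x$ lands.

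The paper's own proof is simply a citation of \cite[Theorem 5]{Afsari:11}, which is precisely the missing ingredient: in constant non-negative curvature, one gradient step of a weighted Fr\'echet cost with step size at most the inverse of the Hessian bound stays in the convex hull of the data. Proving that fact requires real geometric work (e.g.\ writing the hull as an intersection of convex balls or hemispheres and showing, via the constant-curvature law of cosines, that the step cannot cross any supporting hypersurface); it is not a formal consequence of geodesic convexity plus iterated interpolation. So either invoke \cite[Theorem 5]{Afsari:11} directly, or supply an argument of that supporting-hypersurface type for the containment of $q'$; as written, the proposal assumes the lemma's essential content.
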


The results follows from \cite[Theorem 5]{Afsari:11}.
We can then prove the following.
\begin{proposition}\label{prop:iterateshull}
  Assume that $\M$ has constant, non-negative curvature, $\vct{x}(l)\in\Sset$, and $x_i(l+1)$ is computed according to \eqref{eq:Riemann-protocol} with $\varepsilon\in(0,\mu_{max}\inverse]$. Then $x_i(l+1)\in \hull\bigl(\{x_i(l)\}_{i\in V}\bigr)$. Moreover, this implies $x_i(l+1)\in \hull\bigl(\{u_i\}_{i\in V}\bigr)$.
\end{proposition}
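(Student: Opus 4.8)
The plan is to prove the first claim by induction on $l$ and then derive the second claim by a straightforward monotonicity argument. The base case $l = 0$ is immediate: $x_i(0) = u_i \in \{u_j\}_{j \in V} \subseteq \hull(\{u_j\}_{j\in V})$, and in particular $x_i(0) \in \hull(\{x_i(0)\}_{i\in V})$ trivially. For the inductive step, suppose $\vct{x}(l) \in \Sset$ and $x_i(l) \in \hull(\{x_j(l)\}_{j \in V})$ for every $i$; we want $x_i(l+1) \in \hull(\{x_j(l)\}_{j\in V})$. By Lemma~\ref{lemma:hullneighbors}, since $\M$ has constant non-negative curvature, $\vct{x}(l) \in \Sset$ and $\varepsilon \in (0, \mu_{max}^{-1}]$, we have $x_i(l+1) \in \hull(\{x_j(l)\}_{j \in N_i \cup \{i\}})$. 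Now $\{x_j(l)\}_{j \in N_i \cup \{i\}} \subseteq \{x_j(l)\}_{j \in V}$, so by Lemma~\ref{lemma:hullinclusion}, $\hull(\{x_j(l)\}_{j \in N_i \cup \{i\}}) \subseteq \hull(\{x_j(l)\}_{j\in V})$, which gives $x_i(l+1) \in \hull(\{x_j(l)\}_{j\in V})$, completing the induction.

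For the ``moreover'' part, I would argue that the convex hulls of the iterates form a nested decreasing sequence, $\hull(\{x_j(l+1)\}_{j\in V}) \subseteq \hull(\{x_j(l)\}_{j\in V})$ for all $l$. Indeed, we have just shown $x_i(l+1) \in \hull(\{x_j(l)\}_{j\in V})$ for every $i \in V$, so the set $\{x_j(l+1)\}_{j\in V}$ is contained in $\hull(\{x_j(l)\}_{j\in V})$; applying $\hull(\cdot)$ and using that $\hull(\hull(\calV)) = \hull(\calV)$ (the hull of a convex set is itself), together with Lemma~\ref{lemma:hullinclusion}, gives $\hull(\{x_j(l+1)\}_{j\in V}) \subseteq \hull(\{x_j(l)\}_{j\in V})$. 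Chaining these inclusions from level $0$ up to level $l$ yields $x_i(l+1) \in \hull(\{x_j(l)\}_{j\in V}) \subseteq \hull(\{x_j(0)\}_{j\in V}) = \hull(\{u_j\}_{j\in V})$.

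The only subtlety to be careful about is that Lemma~\ref{lemma:hullneighbors} requires its hypothesis $\vct{x}(l)\in\Sset$ at every step, so strictly speaking the induction hypothesis should also carry the assertion $\vct{x}(l)\in\Sset$; this is exactly what the inclusion $\hull(\{x_j(l+1)\}_{j\in V}) \subseteq \hull(\{u_j\}_{j\in V})$ together with $\vct{u}\in\Sset$ delivers, since every $x_i(l+1)$ then lies within distance $r^\ast$ of the same point $y$ witnessing $\vct{u}\in\Sset$ (using that a ball of radius $r^\ast$ is convex, so it contains the hull of the measurements). Thus the statement should be understood with the standing assumption $\vct{u}\in\Sset$ inherited from the surrounding discussion, and the induction simultaneously maintains $\vct{x}(l)\in\Sset$ and the hull containment. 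I expect the main (very mild) obstacle to be merely bookkeeping: making explicit that $\Sset$-membership is preserved so that Lemma~\ref{lemma:hullneighbors} applies at each iteration; the convex-geometry content itself is entirely supplied by Lemmas~\ref{lemma:hullinclusion} and~\ref{lemma:hullneighbors}.
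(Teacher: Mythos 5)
Your proposal is correct and follows essentially the same route as the paper: apply Lemma~\ref{lemma:hullneighbors} together with Lemma~\ref{lemma:hullinclusion} to get $x_i(l+1)\in\hull\bigl(\{x_j(l)\}_{j\in V}\bigr)$, then chain the resulting nested inclusions of hulls back to $l=0$. Your closing remark about maintaining $\vct{x}(l)\in\Sset$ along the iteration (via $\vct{u}\in\Sset$ and convexity of the ball of radius $r^\ast$) is exactly the bookkeeping the paper defers to the first step of the proof of Theorem~\ref{thm:pointconvergence}, so it is a fair and consistent way to justify the iterated use of Lemma~\ref{lemma:hullneighbors}.
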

\begin{proof}
  From Lemmata~\ref{lemma:hullinclusion} and \ref{lemma:hullneighbors}, we have that $x_i(l+1)\in \hull\bigl(\{x_j(l)\}_{j \in N_i\cup \{i\}}\bigr)\subseteq\hull\bigl(\{x_i(l)\}_{i\in V}\bigr)$. The first claim follows. This also implies $\hull\bigl(\{x_i(l+1)\}_{i\in V}\bigr)\subseteq\hull\bigl(\{x_i(l)\}_{i\in V}\bigr)$ and, iteratively, $\hull\bigl(\{x_i(l+1)\}_{i\in V}\bigr)\subseteq \hull\bigl(\{x_i(0)\}_{i\in V}\bigr)= \hull\bigl(\{u_i\}_{i\in V}\bigr)$. The rest follows.
\end{proof}

We are now ready to show an improved version of Theorem~\ref{thm:globalconvergence}.
\begin{thm}\label{thm:pointconvergence}
  Assume that $\M$ has constant, non-negative curvature and $\vct{u}\in\Sset$. Then the iterates given by protocol \eqref{eq:Riemann-protocol} with $\varepsilon\in(0,\mu_{max}\inverse]$ satisfy, for all $j\in V$, $\lim_{l\to\infty}x_j(l)=y^\ast$, where $y^\ast\in\hull\bigl(\{u_i\}_{i\in V}\bigr)$.
\end{thm}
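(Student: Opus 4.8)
The plan is to combine the convergence-to-critical-points result of Theorem~\ref{thm:critialconvergence}, the ``no spurious critical points'' result of Theorem~\ref{th:Riemann-globalminset}, and the shrinking-convex-hull property of Proposition~\ref{prop:iterateshull}. First I would verify that the hypotheses of Theorem~\ref{thm:globalconvergence}'s machinery apply on $\Sset$: since $\vct{u}\in\Sset$, Proposition~\ref{prop:iterateshull} (with $\varepsilon\in(0,\mu_{max}^{-1}]$) gives $x_i(l)\in\hull(\{u_i\}_{i\in V})$ for all $i$ and all $l$, and since $\Sset$ contains a ball of radius $r^\ast>$ each $d(x_i,y)$ and any ball of radius $\leq r^\ast$ is convex, the hull $\hull(\{u_i\})$ stays inside $\Sset$. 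Hence $\vct{x}(l)\in\Sset\subseteq\calE_{\M^N}(2r^\ast)$ for all $l$, so $\varphi$ is smooth along the iterates, the step size is admissible, and Theorem~\ref{thm:critialconvergence} applies: every cluster point of $\{\vct{x}(l)\}$ is a critical point of $\varphi$ lying in $\Sset$. By Theorem~\ref{th:Riemann-globalminset}, any such critical point lies in $\calD$, i.e. is a consensus configuration.

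Next I would upgrade ``every cluster point is in $\calD$'' to ``the sequence converges to a single point of $\calD$.'' Here I would use the shrinking-hull property more carefully: set $\calU_l=\hull(\{x_i(l)\}_{i\in V})$, a nested decreasing family of closed convex subsets of the compact set $\hull(\{u_i\})$ (compact since $\Sset$-balls are relatively compact in a complete manifold with the stated bounds, or by intersecting with a closed $r^\ast$-ball). Let $\calU_\infty=\bigcap_l\calU_l$, a nonempty compact convex set. Any cluster point $\vct{y}=(y,\dots,y)\in\calD$ of the sequence has all coordinates equal to some $y\in\calU_\infty$. I would then argue $\calU_\infty$ is a single point: if it contained two distinct points, one can show $\varphi$ would not be able to decrease to $0$ — more precisely, $\mathrm{diam}(\calU_l)$ is non-increasing, and I claim it must tend to $0$. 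To see this, note that from Theorem~\ref{thm:critialconvergence} the gradient $\grad_{\vct x}\varphi(\vct x(l))\to 0$; if $\mathrm{diam}(\calU_l)\to c>0$, then (since $G$ is connected and the states stay in a fixed convex ball where $\phi_{ij}^2$ has strictly positive derivative along radial geodesics by Lemma~\ref{thm:derdistancepair}) the gradient norm would be bounded below away from zero along a subsequence, a contradiction. Hence $\mathrm{diam}(\calU_l)\to 0$, so $\calU_\infty=\{y^\ast\}$ is a single point, and since every cluster point equals $(y^\ast,\dots,y^\ast)$ and the sequence lives in the compact set $\hull(\{u_i\})^N$, the whole sequence converges: $\lim_{l\to\infty}x_j(l)=y^\ast$ for every $j\in V$.

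Finally, $y^\ast\in\calU_\infty\subseteq\calU_0=\hull(\{u_i\}_{i\in V})$, which is exactly the last claim. The main obstacle I anticipate is the rigorous passage from ``gradient $\to 0$ and cluster points in $\calD$'' to ``$\mathrm{diam}(\calU_l)\to 0$'': one must rule out the possibility of the iterates drifting along $\calD$ (or along a degenerate limiting set) without the hull actually shrinking to a point. The cleanest route is probably to show the gradient-norm lower bound $\norm{\grad_{\vct x}\varphi(\vct x)}\geq \rho(\mathrm{diam}(\hull(\{x_i\})))$ for some continuous $\rho$ with $\rho(c)>0$ for $c>0$, uniform over configurations in the fixed compact convex set, which follows by a compactness argument from Lemma~\ref{thm:derdistancepair} together with connectedness of $G$ (a non-consensus configuration always has at least one edge with nonzero contribution to the gradient, and the hull diameter is controlled by edge distances via the path argument in the proof of Theorem~\ref{thm:globalconvergence}). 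Granting that, the convergence of $\sum_l\norm{\grad\varphi(\vct x(l))}^2$ forces $\mathrm{diam}(\calU_l)\to 0$ and the theorem follows.
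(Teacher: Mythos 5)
Your proposal is correct, and its first two steps coincide with the paper's: you show $\hull(\{u_i\})^N\subseteq\Sset$ via convexity of a ball of radius $r^\ast$, keep the iterates in the hull by Proposition~\ref{prop:iterateshull} with $\varepsilon\in(0,\mu_{max}\inverse]$, and invoke Theorems~\ref{thm:critialconvergence} and \ref{th:Riemann-globalminset} to conclude that cluster points are consensus configurations. Where you genuinely diverge is the passage to a \emph{single} limit point. The paper extracts one cluster point $\vct{y}^\ast=(y^\ast,\ldots,y^\ast)\in\calD\cap\calU^N$ by Bolzano--Weierstrass and then uses a trapping argument: once $x_i(L)\in\calB_\M(y^\ast,\xi)$, the monotone-hull property plus convexity of small balls forces $x_i(l)\in\calB_\M(y^\ast,\xi)$ for all $l\geq L$, so no quantitative relation between gradient and spread is ever needed. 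You instead prove that the diameter of the nested hulls $\calU_l$ tends to zero, using the uniform lower bound $\norm{\grad_{\vct{x}}\varphi(\vct{x})}\geq\rho(c)>0$ on the compact set of configurations in $\overline{\calU}^N$ with diameter at least $c$ (which is legitimate: that set lies in $\Sset$, the gradient is continuous there since all pairwise distances stay below $2r^\ast\leq\inj\M$, and Theorem~\ref{th:Riemann-globalminset} excludes critical points), together with $\grad_{\vct{x}}\varphi(\vct{x}(l))\to0$ from the proof of Theorem~\ref{thm:critialconvergence}. Your route is more quantitative—it shows the configuration's spread itself shrinks to zero, which is slightly stronger information—at the cost of the extra compactness/continuity bookkeeping (and, as in the paper, it silently treats the convex hull of finitely many points inside the $r^\ast$-ball as closed); the paper's trapping argument is shorter and needs only the qualitative facts already established. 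Both correctly exploit the same two constant-curvature ingredients, Lemma~\ref{lemma:hullneighbors} and the convexity of balls of radius at most $r^\ast$.
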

\begin{proof}
 For the sake of brevity, let $\calU=\hull\bigl(\{u_i\}_{i\in V}\bigr)$. We will show the claim in three steps.

The first step is to show that $\calU^N\subseteq\Sset$. By definition of $\Sset$, $\vct{u}\in\Sset$ implies that there exists $y\in\M$ such that $u_i\in\calB_\M(y,r^\ast)$ for all $i\in\{1,\ldots,N\}$. Hence $\calU\subset\calB_M(y,r^\ast)$. It follows that for any point say $\vct{v}$, in $\calU^N$, we also have $v_i\in\calB_M(y,r^\ast)$, which means $\vct{v}\in\Sset$. Hence $\calU^N\subseteq\Sset$.

The second step of the proof is to show that the iterates $\{\vct{x}(l)\}$ converge to a specific, bounded segment of $\calD$. From Proposition~\ref{prop:iterateshull}, we have that, for all $i\in\ V$, the sequence of iterates $\{x_i(l)\}$ remains in $\calU$. Equivalently, we have that $\vct{x}(l)\in\calU^N\subseteq\Sset$ for all $l\in\naturals$. From this fact and Theorem~\ref{thm:critialconvergence} we have therefore that the iterates $\{\vct{x}(l)\}$ converge to the set $\calD_\calU=\calD\cap\calU^N$.

The third and final step of the proof is to show convergence to a single point.
Notice that since $\vct{u}\in\Sset$, the maximum distance between any two point in $\calU$ is less than $2r^\ast$, hence $\calD_\calU$ is diffeomorphic (e.g., through the $\log$ map in $\M^N$) to a compact region in $\real{nN}$, where $n$ is the dimension of the manifold. We can then apply the Bolzano-Weierstrass theorem \cite{Protter:book98} to conclude that there exists an infinite subsequence of indeces $l_k\subset \naturals$ such that $\lim_{k\to\infty}\vct{x}(l_k)=\vct{y}^\ast\in\calD_\calU$, \ie the subsequence of iterates $\{\vct{x}(l_k)\}$ converges to a single point in $\calD_\calU$ of the form $\vct{y}^\ast=(y^\ast,\ldots,y^\ast)$ where $y^\ast\in\hull\bigl(\{u_i\}_{i\in V}\bigr)$. This implies that for any arbitrarily small $\xi\geq0$ there exists an $L\in\naturals$ large enough such that $\vct{x}(L)\in\calB_{\M^N}(\vct{y}^\ast,\xi)$. This in turn implies that $x_i(L)\in\calB_\M(y^\ast,\xi)$. Using Proposition~\ref{prop:iterateshull} we therefore get that, for all $l\geq L$, $l\in\naturals$, we have $x_i(l)\in\hull(\{x_i(l)\})\subseteq\hull(\{x_i(L)\})\subset\calB_\M(y^\ast,\xi)$.
To summarize we have that
  $\forall \xi\geq0,\, \exists L\in\naturals :\: \forall l\geq L,\,
 x_i(l)\in\calB_\M(y^\ast,\xi)$,
which, by definition, means $\lim_{l\to\infty}x_j(l)=y^\ast$.
\end{proof}

We remark that Theorem~\ref{thm:pointconvergence} is analogous to the results obtained for the centralized case in \cite{Afsari:11}.
Notice also that in Theorem~\ref{thm:pointconvergence} we require $\varepsilon\in(0,\mu_{max}\inverse]$ instead of $\varepsilon\in(0,2\mu_{max}\inverse]$, as we used to back in \S\ref{sc:convergence}. This is because in this section we rely on the fact that the iterates $\{x_i(l)\}$ never leave $\hull\bigl(\{u_i\}_{i\in V}\bigr)$, which might not be true if $\varepsilon\in(\mu_{max}\inverse,2\mu_{max}\inverse)$.
Finally, if we combine Theorem~\ref{thm:pointconvergence} with Corollary~\ref{cor:rstarinfty}, we can deduce that in the Euclidean case, where $\M=\real{n}$, the consensus algorithm has global convergence to a single consensus configuration. Indeed, this is in agreement with what we already know from the standard literature.

While we conjecture that it should be possible to extend the results of this section to the case of manifolds with non-constant curvature, extending Lemma~\ref{lemma:hullneighbors} is, in general, not trivial. Therefore, the strategy adopted here cannot be easily used to replace the results of \S\ref{sec:convergenceset}.

\section{EXPERIMENTS}
\label{sc:experiments}

In this section we evaluate the proposed algorithms on synthetic data drawn from the special orthogonal group, the sphere and the Grassmann manifold.

The experiments are performed using a synthetic network of $N=15$ nodes with a $4$-regular connectivity graph.
To generate the measurements, we choose an arbitrary element $x_0 \in \M$ and compute $N$ random tangent vectors $v_{0i}$ in $T_{x_0}\M$ drawn from an isotropic Gaussian distribution with standard deviation $\sigma=0.2$. The measurement at each node $i\in V$ is then defined as $u_i=\exp_{x_0}(v_{0i})$.
We then run our Riemannian consensus algorithm for $150$ iterations. We use step sizes compatible with the bounds found in \S\ref{sc:choiceepsilon}. After each iteration, we compute the distance between each state and the Fr\'echet mean $\bar{u}$ of the initial measurements (Figure~\ref{fig:results}, top row). We also record the distance between the Fr\'echet mean of the states at each iteration and $\bar{u}$ (Figure~\ref{fig:results}, bottom row). We have selected $SO(7)$, $\Sphere{6}$ and $\Grassmann(7,3)$ as particular examples. However, similar results are obtained on other manifolds (such as $SO(3)$).

A number of points can be made on the experiments. First, Riemannian consensus clearly converges to a single consensus configuration. This was expected, because, in this experiment, the measurements that we have generated are not too far one from the other. Second, the algorithm modifies the Fr\'echet mean of the states, especially in the first iterations. When this algorithm terminates, the estimated Fr\'echet mean is at a distance in the order of $10^{-4}$ from the true Fr\'echet mean. This error might be negligible in practical applications, but it is many order of magnitude greater than the achievable machine precision.

We include also two experiments (Fig.~\ref{fig:resultfail}) for which the measurements are taken around the circle and are far apart, \ie $\vct{u} \notin \Sset$ (see Theorem~\ref{th:Riemann-globalminset}). With a linear network, the algorithm converges to a consensus configuration, as expected from Theorem~\ref{thm:globalconvergencelinear}. On the other hand, with a ring network, the algorithm  gets trapped in a local minima and fails. These experiment suggests that the convergence of the algorithm depends on both the manifold and the network topologies. However, a complete investigation of this fact is out of the scope of this paper.

\newcommand{\figsize}{0.3\textwidth}
\begin{figure*}[t]
\subfloat[][Consensus in $SO(7)$]{\includegraphics[width=\figsize]{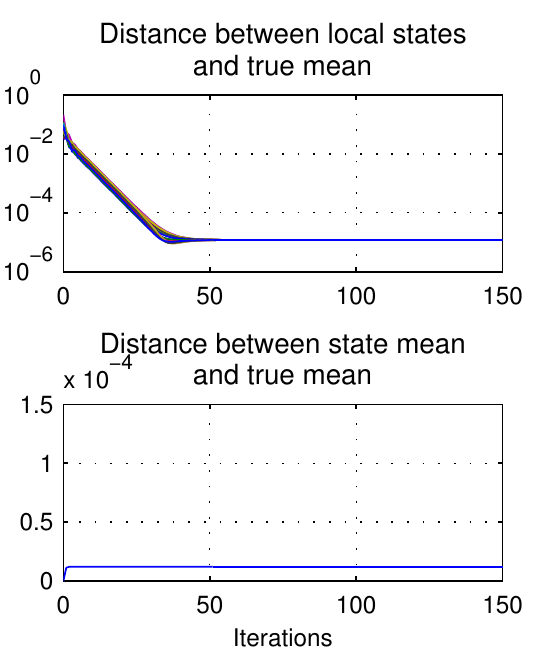} \label{fig:expRot1}} \hfill
\subfloat[][Consensus in $\Sphere{6}$]{\includegraphics[width=\figsize]{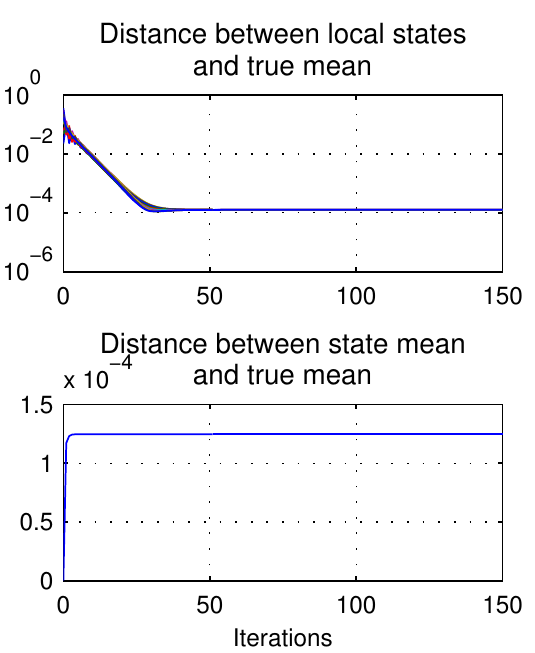} \label{fig:expSt1}} \hfill
\subfloat[][Consensus in $\Grassmann(7,3)$]{\includegraphics[width=\figsize]{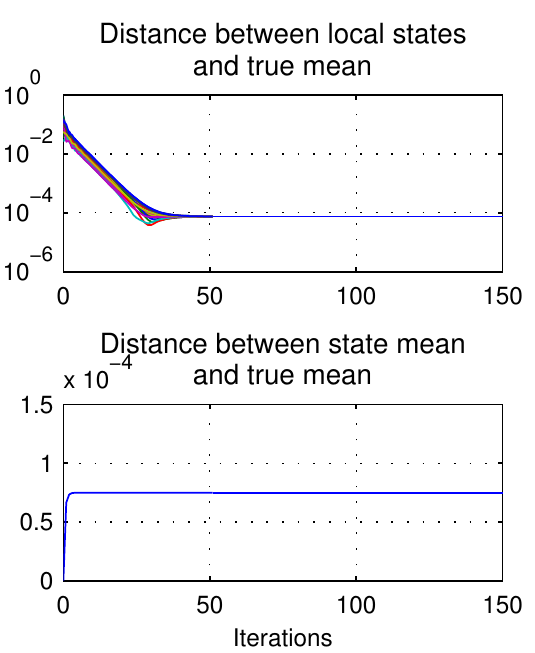} \label{fig:expGrass1}}\\
\vspace{-1mm}

\caption{Results for the algorithm applied to data in $SO(7)$, $\Sphere{6}$ and $\Grassmann(7,3)$. Top row: distances between each state and the Fr\'echet mean of the measurements for the Riemannian consensusalgorithm. Bottom row: distance between Fr\'echet mean of the states and the true Fr\'echet mean.
}
\vspace{-1mm}
\label{fig:results}
\end{figure*} 


\begin{figure}
\centering
\subfloat[][]{
\includegraphics[trim=0 0 0 2mm,clip]{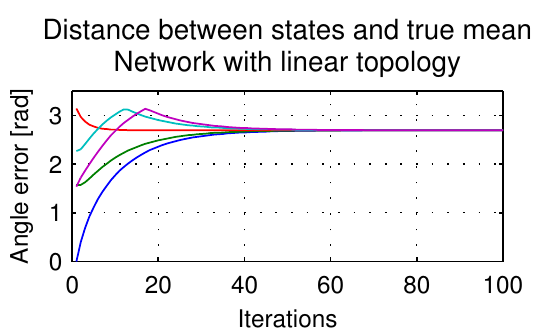}
\begin{tikzpicture}[mypoint/.style = {fill=gray,draw=black,inner sep=3pt,circle,very thick},myline/.style={very thick,blue},baseline={(0,-1.7)}]
\draw (0,0) circle (1);
\draw (0:1) node[mypoint,label=180:$u_1$](n1) {}
	(80:1) node[mypoint,label=-90:$u_2$](n2) {}
	(170:1) node[mypoint,label=0:$u_3$](n3) {}
	(230:1) node[mypoint,label=50:$u_4$](n4) {}
	(300:1) node[mypoint,label=90:$u_5$](n5) {};
\draw[myline] (n1) to (n2)
	(n2) to (n3)
	(n3) to (n4)
	(n4) to (n5);
\end{tikzpicture}
}
\subfloat[][]{
\includegraphics[trim=0 0 0 2mm,clip]{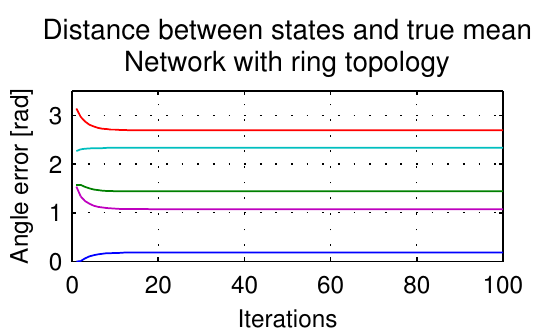}
\begin{tikzpicture}[mypoint/.style = {fill=gray,draw=black,inner sep=3pt,circle,very thick},myline/.style={very thick,blue},baseline={(0,-1.7)}]
\draw (0,0) circle (1);
\draw (0:1) node[mypoint,label=180:$u_1$](n1) {}
	(80:1) node[mypoint,label=-90:$u_2$](n2) {}
	(170:1) node[mypoint,label=0:$u_3$](n3) {}
	(230:1) node[mypoint,label=50:$u_4$](n4) {}
	(300:1) node[mypoint,label=93:$u_5$](n5) {};
\draw[myline] (n1) to (n2)
	(n2) to (n3)
	(n3) to (n4)
	(n4) to (n5)
	(n5) to (n1);
\end{tikzpicture}
}
\caption{An example where Riemannian consensus converges (a) or fails to converge (b) to a consensus configuration depending on the topology. These plots correspond to the initial configurations portrayed on the right.}
\label{fig:resultfail}
\vspace{-4mm}
\end{figure}

\section{CONCLUSIONS}
In this paper, we proposed Riemannian consensus, a natural generalization of classical consensus algorithms to Riemannian manifolds.
Our main contribution is finding sufficient conditions that guarantee convergence of the algorithm to a consensus configuration. These conditions depend on the curvature and topology of the manifold as well as the connectivity of the communication network.
Experiments on data sampled from the special orthogonal group, the sphere and the Grassmann manifold illustrated the applicability of our method.

\begin{appendix}
This appendix contains all the additional derivations and proofs for the claims in the paper.

\subsection{Additional notation}
In this section we will review additional concepts and notation from Riemannian geometry. We will focus only on those definitions and properties that are going to be applied in this Appendix. We refer the reader to standard texts (\eg \cite{DoCarmo:riemannian92,Sakai:book96}) for the complete and precise definitions.

Following the notation introduced in \S\ref{sec:reviewRiemanniangeometry}, let $(\M,\metric{}{})$ be a Riemannian manifold with its Riemannian metric.
We denote the length of a curve $\gamma: [a,b] \to \M$ between two points $x=\gamma(a)$ and $y=\gamma(b)$ as $L(\gamma)=\int_a^b \metric{\dot{\gamma}(t)}{\dot{\gamma}(t)}^\frac{1}{2} \de t$.
We denote as $\nabla$ the Levi-Civita connection on $\M$.
If $X=X(t)$ and $Y=Y(t)$ are vector fields defined along a curve $\gamma(t)$ in $\M$, then the metric compatibility property of $\nabla$ implies
$ 
  \dert\metric{X}{Y}=\metric{\nabla X}{Y}+\metric{X}{\nabla Y},
$ 
where we use the notational convention $\nabla X=\nabla_{\dot{\gamma}(t)} X$ when $X$ is a vector field along a curve. With similar notation, $X$ is said to be \emph{parallel} if $\nabla X=0$. In this case $X(t)$ is said to be the \emph{parallel transport} of $X(0)$ from $\gamma(0)$ to $\gamma(t)$ along the curve, and we use the notation $X(t)=\tau_0^tX(0)$. The curve $\gamma(t)$ is said to be geodesic if it parallel transports its own tangent, \ie $\nabla \dot{\gamma}(t)=0$.
 
The Riemannian curvature tensor $R$ is defined as
$
 R(X,Y)Z=\nabla_X\nabla_YZ-\nabla_Y\nabla_XZ-\nabla_{[X,Y]}Z,%
$ 
where $X,Y$ and $Z$ are smooth vector fields on $\M$. For the sake of clarity, we will also use the notational convention $R(X,Y,Z,W)=\metric{R(X,Y)Z}{W}$. The curvature tensor has many symmetry properties. In particular, $R(X,Y,Z,W)=-R(Y,X,Z,W)=R(Z,W,X,Y)$. Therefore, $R(X,Y,Z,W)=0$ whenever $X=Y$ or $Z=W$.  
Given a point $x\in \M$ and two linearly independent vectors $v,w \in T_x\M$ spanning a two-dimensional subspace $\sigma\subseteq T_x\M$, from the Riemannian curvature tensor one can define the sectional curvature for $\sigma$ as
$
 K_\sigma(x) = \frac{R(v,u,v,u)}{\norm{u}^2\norm{v}^2-\metric{u}{v}^2}
$. 

We denote by $\M_\kcurv$ a complete simply connected Riemannian manifold with constant curvature $\kcurv$ and with the same dimension as $\M$.
Also, 
we define the shorthand notation $\sin(\sqrt{\kcurv} x)=\sk(x)$, $\cos(\sqrt{\kcurv} x)=\ck(x)$, $\sinh(\sqrt{|\kcurv|} x)=\shk(x)$, $\cosh(\sqrt{|\kcurv|} x)=\chk(x)$.

A geodesic triangle $\triangle(x_1,x_2,x_3)$ in a Riemannian manifold $\M$ is a figure formed by three distinct points $x_1$, $x_2$ and $x_3$, called the vertices, that are connected by three minimal, unique geodesics, called the sides (see Figure~\ref{fig:deftriangle}). We denote as $\gamma_i(t)$ the side opposite to the vertex $x_i$ and we denote its length as $l_i=L(\gamma_i)$. We indicate as $\beta_i=\angle x_i=\angle(x_j,x_i,x_k)$ the oriented angle between the tangent vectors of the two geodesics emanating from $x_i$. 
A geodesic hinge $(y;\gamma_1,\gamma_2)$ in $\M$ is a figure formed by a point $y$ and two minimal geodesics segments emanating from $y$ (see Figure~\ref{fig:defhinge}).

\begin{figure}[htb]
  \vspace{-2mm}
  \hfill\subfloat[]{\beginpgfgraphicnamed{triangle}
\begin{tikzpicture}[x=1.5cm,y=1.5cm,mypoint/.style = {fill,inner sep=1.2pt,circle}, myarrow/.style={->,blue!80!black,shorten >= 1pt}, coordpoint/.style={inner sep=-1pt},angle/.style={red!50!black}]
 \node (x3) at (0,0) [mypoint,label=below:$x_3$]{};
 \node (x1) at (2.5,1.8) [mypoint,label=above:$x_1$]{};
 \node (x2) at (3,-.8) [mypoint,label=below:$x_2$]{};

 \draw (x3) to[out=20,in=-130] (x1) 
              node(abeta3a)[pos=0.21,coordpoint]{}
              node(abeta2a)[pos=0.81,coordpoint]{}
	      node(gamma1) [pos=0.5,sloped, above]{$\gamma_1$};
 \draw (x3) to[out=5,in=140] (x2) 
              node(abeta3b)[pos=0.21,coordpoint]{}
              node(abeta1a)[pos=0.79,coordpoint]{}
	      node(gamma2) [pos=0.5, sloped, below] {$\gamma_2$};
 \draw (x2) to[out=135,in=-120] (x1)
              node(abeta2b)[pos=0.79,coordpoint]{}
              node(abeta1b)[pos=0.23,coordpoint]{}
	      node(gamma3) [pos=0.5, sloped, above] {$\gamma_3$};

 \draw[angle] (abeta1a) to[out=55,in=-160] (abeta1b);
 \draw[angle] (abeta2a) to[out=-60,in=180] (abeta2b);
 \draw[angle] (abeta3a) to[out=-45,in=60] (abeta3b);

 \path (x1) +(-155:0.5) node[angle]{$\beta_1$};
 \path (x2) +(105:0.5) node[angle]{$\beta_2$};
 \path (x3) +(45:0.5) node[angle]{$\beta_3$};

\end{tikzpicture}
\endpgfgraphicnamed\label{fig:deftriangle}}\hfill
  \subfloat[]{\beginpgfgraphicnamed{hinge}
\begin{tikzpicture}[x=1.5cm,y=1.5cm,mypoint/.style = {fill,inner sep=1.2pt,circle}, myarrow/.style={->,blue!80!black,shorten >= 1pt}, coordpoint/.style={inner sep=-1pt},angle/.style={red!50!black}]
 \node (y) at (0,0) [mypoint,label=below:$y$]{};
 \node (x1) at (2.5,1.8) [mypoint]{};
 \node (x2) at (3,-.8) [mypoint]{};

 \draw (y) to[out=20,in=-130] (x1) 
	      node(gamma1) [pos=0.5,sloped, above]{$\gamma_1$};
 \draw (y) to[out=5,in=140] (x2) 
	      node(gamma2) [pos=0.5, sloped, below] {$\gamma_2$};

\end{tikzpicture}
\endpgfgraphicnamed\label{fig:defhinge}}
  \hfill{}

  \caption{Definition of \protect\subref{fig:deftriangle} geodesic
    triangle $\triangle(x_1,x_2,x_3)$ and \protect\subref{fig:defhinge} geodesic hinge $(y;\gamma_1,\gamma_2)$}
\end{figure}
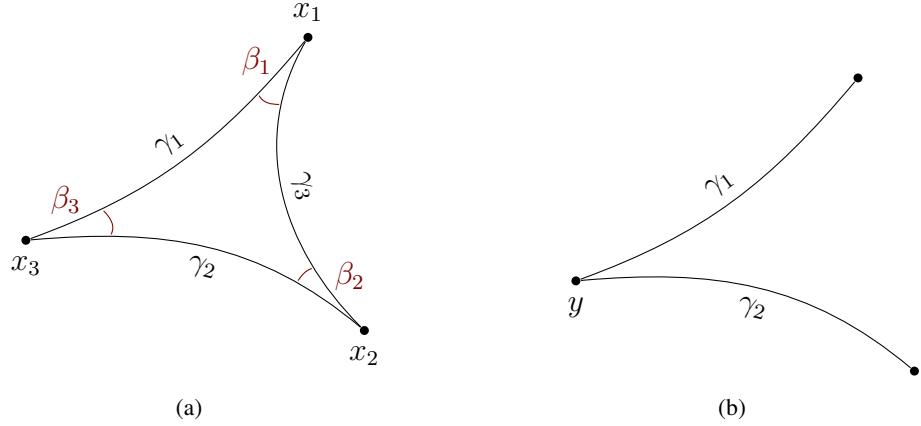

Given a vector field $X$ along a normal (\ie unit speed) geodesic $\gamma$, we define its tangential and perpendicular components as $X^\parallel=\metric{X}{\dot\gamma}\dot\gamma$ and $X^\perp=X-X^\parallel$, respectively.

A smooth vector field $Y$ along a geodesic $\gamma$ is said to be a Jacobi field if it satisfies the second order differential equation 
$\nabla\nabla Y+R(Y,\dot{\gamma})\dot{\gamma}=0$.
 Intuitively, Jacobi fields represent a variation of $\gamma$ under a perturbation of the endpoints. In fact, it is known \cite[Chapter 2, Lemma 2.4]{Sakai:book96} that a Jacobi field is uniquely determined by fixing the value of $Y$ at the two endpoints of $\gamma$. Moreover, if $Y_1$ and $Y_2$ are two Jacobi field along $\gamma$, then also $Y=Y_1+Y_2$ is a Jacobi field along $\gamma$.

\subsection{General results}
\label{sec:general-results}
In this section we collect useful results that can be easily obtained from the existing literature.

\myparagraph{Laws of cosines} In manifolds with constant curvature $\Delta$, the angles and sides of geodesic triangles are related by the \emph{laws of cosines} in Table \ref{tab:cosinelaws}.

\begin{table}[b]
\center
\begin{tabular}{c|c}
$\kappa = 0$ & $l_i^2=l_{i+1}^2+l_{i+2}^2-2l_{i+1}l_{i+2}\cos\beta_i$ \\
$\kappa >0$ & $\ck(l_i)=\ck(l_{i+1})\ck(l_{i+2})+\sk(l_{i+1})\sk(l_{i+2})\cos\beta_i$\\
$\kappa <0$ & $\chk(l_i)=\chk(l_{i+1})\chk(l_{i+2})-\shk(l_{i+1})\shk(l_{i+2})\cos\beta_i$\\
\end{tabular}
\caption{Law of cosines for geodesic triangles in manifolds of constant curvature $\Delta$}
\label{tab:cosinelaws}
\end{table}

Using these laws it is possible to show the following Lemma on geodesic triangles in manifolds with constant curvature \cite[page 138]{Sakai:book96}.
\begin{lemma}\label{thm:trianglesMD}
Let $T=\triangle(x_1,x_2,x_3)$, $T^\prime=\triangle(x_1^\prime,x_2^\prime,x_3^\prime)$ be two geodesic triangles in $\M_\Delta$. The side lengths for $T$ and $T^\prime$ are denoted as $l_i$ and $l_i^\prime$, respectively, $i=1,2,3$ and let $l_i=l_i^\prime$, $i=1,2$. If $\Delta>0$, assume also $l_2,l_3 < \pi/\sqrt{\Delta}$. Then $\angle x_i^\prime>\angle x_i$ if and only if $l_i^\prime>l_i$.
\end{lemma}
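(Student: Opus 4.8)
The plan is to reduce the statement to a single monotonicity fact: in $\M_\Delta$, if the two sides issuing from a vertex are held fixed, the length of the opposite side is a strictly increasing function of the angle at that vertex. Since the hypotheses give $l_1=l_1'$ and $l_2=l_2'$, the triangles $T$ and $T'$ share the two sides emanating from $x_3$, so it suffices to treat the index $i=3$; and strict monotonicity of $l_3$ as a function of $\beta_3=\angle x_3$ immediately upgrades to the claimed ``if and only if'', since comparing $T$ with $T'$ then forces $\beta_3'>\beta_3 \iff l_3'>l_3$ (and likewise with $=$ and with $<$).

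Next I would invoke the appropriate law of cosines from Table~\ref{tab:cosinelaws} with $i=3$, writing $a=l_1$, $b=l_2$ for the fixed sides. In every curvature regime the relation has the shape
\begin{equation*}
f_\Delta(l_3) = g_\Delta(a,b) + h_\Delta(a,b)\cos\beta_3,
\end{equation*}
where $f_0(t)=t^2$ with $h_0(a,b)=-2ab$; $f_\Delta(t)=\cd(t)$ with $h_\Delta(a,b)=\sd(a)\sd(b)$ when $\Delta>0$; and $f_\Delta(t)=\chd(t)$ with $h_\Delta(a,b)=-\shd(a)\shd(b)$ when $\Delta<0$. The term $g_\Delta(a,b)$ does not depend on $\beta_3$.

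The core step is then a sign-and-monotonicity bookkeeping. On $(0,\pi)$, the range of an angle of a nondegenerate geodesic triangle, the map $\beta_3\mapsto\cos\beta_3$ is strictly decreasing. On the admissible range of $l_3$ — namely $(0,\infty)$ for $\Delta\le0$ and $(0,\pi/\sqrt{\Delta})$ for $\Delta>0$, where the hypothesis $l_3<\pi/\sqrt{\Delta}$ enters — the map $f_\Delta$ is a strict bijection onto its range, increasing for $\Delta\le0$ and decreasing for $\Delta>0$; moreover $h_\Delta(a,b)$ is negative exactly when $f_\Delta$ is increasing and positive exactly when $f_\Delta$ is decreasing (for $\Delta>0$, positivity of $h_\Delta$ and injectivity of $\cd$ on the relevant range follow from the hypotheses $l_2,l_3<\pi/\sqrt{\Delta}$ together with the fact that a minimal geodesic of $\M_\Delta$ has length at most $\pi/\sqrt{\Delta}$, so that $\sd(l_1),\sd(l_2)>0$). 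Composing these facts, one checks in each case that increasing $\beta_3$ drives $l_3$ up, i.e. $\beta_3\mapsto l_3$ is strictly increasing; applied to $T$ and $T'$ this yields $\angle x_3'>\angle x_3 \iff l_3'>l_3$, which is the assertion.

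I expect the only delicate point to be the positive-curvature case: one must keep $l_3$ and $l_3'$ strictly inside $(0,\pi/\sqrt{\Delta})$ so that $\cd$ is injective there, and keep $h_\Delta(a,b)=\sd(l_1)\sd(l_2)$ strictly positive so that the angle genuinely affects the right-hand side — both are exactly what the extra hypotheses, plus the elementary bound on the length of a minimal geodesic in $\M_\Delta$, provide. The cases $\Delta=0$ and $\Delta<0$ are unconditional, since $t^2$ and $\chd(t)$ are strictly increasing on all of $[0,\infty)$ and the coefficients $-2ab$ and $-\shd(a)\shd(b)$ are automatically nonzero for $a,b>0$.
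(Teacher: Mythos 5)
Your argument is correct and follows exactly the route the paper intends: the lemma is stated right after the laws of cosines in Table~\ref{tab:cosinelaws} (and otherwise only cited to Sakai), and your monotonicity analysis of the opposite side length as a function of the angle at the common vertex, case by case in the sign of $\Delta$, is the standard proof. Your handling of the delicate positive-curvature point is also fine, since under the paper's definition a geodesic triangle has distinct vertices joined by unique minimal geodesics, so all side lengths lie strictly in $(0,\pi/\sqrt{\Delta})$ and $\sd(l_1)\sd(l_2)>0$ as you need.
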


\myparagraph{Comparison theorems for geodesic triangles and hinges}
We start by reporting a hinge version of the Alexander-Toponogov theorem \cite[Exercise IX.1]{Chavel:book06}.
\begin{thm}\label{thm:ATCT1}
Given a complete Riemannian manifold $\M$ with curvature bounded above by $\Delta$ and a geodesic triangle $\triangle (x_1, x_2, x_3)$ in $\M$, assume
$l_1+l_2+l_3<2\min\left\{\inj \M, \frac{\pi}{\sqrt{\Delta}}\right\}$.
Consider the hinge $(x_3; \gamma_1,\gamma_2)$ and let $(\tilde{x}_3; \tilde{\gamma}_1, \tilde{\gamma}_2)$ be a geodesic hinge in $\M_\Delta$ such that $L(\gamma_1)=L(\tilde{\gamma}_1)$, $L(\gamma_2)=L(\tilde{\gamma}_2)$ and $\angle x_3 = \angle \tilde{x_3}$.
Then
$
d(\gamma_1(l_1),\gamma_2(l_2))\geq d(\tilde\gamma_1(l_1),\tilde\gamma_2(l_2))
$. 
\end{thm}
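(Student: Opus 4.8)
The plan is to reduce the claim to a one-dimensional comparison of distance functions. Freeze the vertex $x_3$ together with the side $\gamma_2$, set $p=\gamma_2(l_2)$, and for $t\in[0,l_1]$ define $\rho(t)=d\bigl(\gamma_1(t),p\bigr)$; in the model $\M_\Delta$ define $\tilde\rho(t)=d\bigl(\tilde\gamma_1(t),\tilde\gamma_2(l_2)\bigr)$. Since $\gamma_2,\tilde\gamma_2$ are minimal of equal length, $\rho(0)=\tilde\rho(0)=l_2$; since $\gamma_1,\tilde\gamma_1$ emanate from the vertex at which the angles are prescribed to agree, the first variation formula combined with $\frac{1}{2}\grad_x d^2(x,y)=-\log_x(y)$ (so $\grad_x d(x,p)=-\log_x(p)/d(x,p)$) gives $\rho'(0)=-\cos\angle x_3=\tilde\rho'(0)$. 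The assertion $d(\gamma_1(l_1),\gamma_2(l_2))\geq d(\tilde\gamma_1(l_1),\tilde\gamma_2(l_2))$ is exactly $\rho(l_1)\geq\tilde\rho(l_1)$, so it is enough to compare $\rho$ and $\tilde\rho$ as solutions of a differential (in)equality with matching data at $t=0$. Note also that the triangle inequality $l_i<l_j+l_k$ and the perimeter hypothesis force $2l_i<l_1+l_2+l_3<2\pi/\sqrt\Delta$, hence each side is shorter than $\pi/\sqrt\Delta$, which makes the comparison hinge in $\M_\Delta$ well defined.

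First I would verify that $\rho$ is of class $C^2$ on $(0,l_1]$ and stays below $\pi/\sqrt\Delta$ there, i.e.\ that $\gamma_1$ never reaches $p$, never meets its cut locus, and meets no conjugate point of $p$; this is where the full strength of $l_1+l_2+l_3<2\min\{\inj\M,\pi/\sqrt\Delta\}$ is used. Granting regularity, I would apply the Hessian comparison theorem (a consequence of the Rauch comparison for Jacobi fields, using $K_\sigma\leq\Delta$): writing $d_p=d(p,\cdot)$ and recalling $\nabla\dot\gamma_1=0$ for the unit-speed geodesic $\gamma_1$,
\begin{equation}
\rho''(t)=\hess\,d_p\bigl(\dot\gamma_1(t),\dot\gamma_1(t)\bigr)\ \geq\ \frac{C_\Delta(\rho(t))}{S_\Delta(\rho(t))}\bigl(1-\rho'(t)^2\bigr),
\end{equation}
with $C_\Delta,S_\Delta$ as in \eqref{eq:SkCk}, while in the constant-curvature model the identical relation holds with equality for $\tilde\rho$.

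Next I would propagate the matching initial data forward by a Sturm-type argument. Put $\psi(t)=C_\Delta(\rho(t))-C_\Delta(\tilde\rho(t))$, or $\psi(t)=\rho(t)^2-\tilde\rho(t)^2$ when $\Delta=0$. Using the identities $C_\Delta'=-\Delta S_\Delta$, $S_\Delta'=C_\Delta$, $C_\Delta^2+\Delta S_\Delta^2=1$, $C_\Delta''+\Delta C_\Delta=0$, the inequality above gives $\psi(0)=\psi'(0)=0$ and $\psi''+\Delta\psi\leq0$ when $\Delta>0$, $\psi''+\Delta\psi\geq0$ when $\Delta<0$, and $\psi''\geq0$ when $\Delta=0$. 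Comparing $\psi$ against $s=S_\Delta$, which solves $s''+\Delta s=0$ and is positive on $(0,\pi/\sqrt\Delta)$, one finds that $s\psi'-s'\psi$ is monotone and vanishes at $0$, hence $\psi/s$ is monotone with zero limit at $0$; therefore $\psi$ keeps the sign that, by the monotonicity of $C_\Delta$ on $[0,\pi/\sqrt\Delta]$, is equivalent to $\rho(t)\geq\tilde\rho(t)$ on $(0,l_1]$. Taking $t=l_1$ finishes the proof.

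The main obstacle is precisely the regularity step: showing that $\gamma_1(t)$ stays off the cut locus of $p$ and that every distance involved remains under $\pi/\sqrt\Delta$, so that both the Hessian comparison and the Sturm argument are legitimate. This is what the perimeter bound is tailored to guarantee, but extracting it cleanly requires either a Klingenberg/Berger-type injectivity-radius estimate, or else running the differential-inequality argument with $C^2$ upper support functions (following Calabi's trick) combined with a ``first violation time'' open/closed argument in $t$, so that no a priori smoothness of $\rho$ need be assumed. A secondary point, dispatched along the way, is checking that $\tilde\rho<\pi/\sqrt\Delta$ holds in $\M_\Delta$ as well, which again follows from the perimeter hypothesis (with Lemma~\ref{thm:trianglesMD} available if needed).
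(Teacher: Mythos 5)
You should know that the paper does not prove Theorem~\ref{thm:ATCT1} at all: it is imported from Chavel \cite[Exercise IX.1]{Chavel:book06} as a known hinge version of the Alexander--Toponogov theorem, and the paper's own work begins only with the triangle version (Theorem~\ref{thm:ATCT2}) deduced from it. Your sketch is therefore a genuine proof where the paper offers a citation, and its skeleton is sound: with $p=\gamma_2(l_2)$, the matching data $\rho(0)=\tilde\rho(0)=l_2$ and $\rho'(0)=\tilde\rho'(0)=-\cos\angle x_3$ are correct (the minimal geodesic $\gamma_2$ is unique because $2l_2\leq l_1+l_2+l_3<2\inj\M$), the Hessian comparison under $K_\sigma\leq\Delta$ gives exactly $\rho''\geq\frac{C_\Delta(\rho)}{S_\Delta(\rho)}\bigl(1-(\rho')^2\bigr)$ with equality in $\M_\Delta$, and your Sturm/Wronskian argument for $\psi$ against $s=S_\Delta$ on $(0,l_1]\subset(0,\pi/\sqrt{\Delta})$ closes the loop, since each $l_i\leq\frac{1}{2}(l_1+l_2+l_3)<\pi/\sqrt{\Delta}$. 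The ``main obstacle'' you flag is, however, much lighter than you make it: because the $\inj\M$ in the hypothesis is the \emph{global} injectivity radius, the triangle inequality applied from both ends of $\gamma_1$ gives $\rho(t)\leq\min\{t+l_2,\,(l_1-t)+l_3\}\leq\frac{1}{2}(l_1+l_2+l_3)<\min\{\inj\M,\pi/\sqrt{\Delta}\}$ for every $t\in[0,l_1]$, so $\gamma_1(t)$ never meets the cut locus of $p$, no conjugate point is reached, and all distances in $\M$ and $\M_\Delta$ stay below $\pi/\sqrt{\Delta}$; no Klingenberg-type estimate, Calabi support functions, or first-violation-time argument is needed, and $\rho$ is smooth on $[0,l_1]$ except in the degenerate case $\gamma_1(t_0)=p$. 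That case forces $t_0=l_2$, $\gamma_1|_{[0,l_2]}=\gamma_2$ and $\angle x_3=0$ (uniqueness of minimal geodesics below $\inj\M$), whence $\tilde\rho(l_1)=\abs{l_1-l_2}\leq l_3=\rho(l_1)$ directly by the triangle inequality, so it can be dispatched before running the comparison. What your route buys over the paper's is a self-contained argument using only Rauch-type estimates of the kind the Appendix already develops (Theorems~\ref{thm:RCT1} and~\ref{thm:RCT2}); what the citation buys is brevity.
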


We will need the following triangle version of Theorem~\ref{thm:ATCT1}.
\begin{thm}\label{thm:ATCT2}
For a geodesic triangle $\triangle(x_1,x_2,x_3)$ in $\M$ suppose that $\gamma_1$ and $\gamma_2$ are minimal and the perimeter $l=l_1+l_2+l_3\leq 2\pi/\sqrt{\Delta}$. Then, there exist a geodesic triangle $\triangle(\tilde x_1, \tilde x_2, \tilde x_3)$ in $\M_\Delta$ with the same side lengths $L(\gamma_i)=L(\tilde\gamma_i)$ and satisfying $\angle x_3 \leq \angle \tilde x_3$.
\end{thm}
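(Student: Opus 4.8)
The plan is to derive the triangle version from the hinge version (Theorem~\ref{thm:ATCT1}) by a continuity/monotonicity argument in the comparison space $\M_\Delta$. Given the geodesic triangle $\triangle(x_1,x_2,x_3)$ in $\M$ with $\gamma_1,\gamma_2$ minimal and perimeter $l\le 2\pi/\sqrt{\Delta}$, I first record its three side lengths $l_1=L(\gamma_1)$, $l_2=L(\gamma_2)$, $l_3=L(\gamma_3)$. The key point is that in the model space $\M_\Delta$ there is a geodesic triangle with exactly these side lengths: the three lengths satisfy the triangle inequalities (since they come from an honest metric triangle), and the perimeter bound $l\le 2\pi/\sqrt\Delta$ is precisely what is needed, when $\Delta>0$, for such a triangle to exist on the sphere of radius $1/\sqrt\Delta$ (for $\Delta\le 0$ there is no constraint). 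Call this comparison triangle $\triangle(\tilde x_1,\tilde x_2,\tilde x_3)$ with $L(\tilde\gamma_i)=l_i$; it is unique up to isometry. The claim to be proved is that its angle at $\tilde x_3$ is at least the angle $\angle x_3$ of the original triangle.

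The second step is to compare $\angle x_3$ with $\angle\tilde x_3$ using the hinge theorem. Consider in $\M$ the hinge $(x_3;\gamma_1,\gamma_2)$, which has leg lengths $l_1,l_2$ and opening angle $\angle x_3$; by construction the geodesic joining $\gamma_1(l_1)=x_1$ to $\gamma_2(l_2)=x_2$ is $\gamma_3$, of length $l_3$. Now build in $\M_\Delta$ the hinge $(\bar x_3;\bar\gamma_1,\bar\gamma_2)$ with leg lengths $l_1,l_2$ and the \emph{same} opening angle $\angle x_3$. Theorem~\ref{thm:ATCT1} applies — its perimeter hypothesis $l_1+l_2+l_3<2\min\{\inj\M,\pi/\sqrt\Delta\}$ is covered by our hypothesis together with $\gamma_1,\gamma_2$ being minimal, after a standard reduction; modulo that hypothesis it gives
\[
d(\gamma_1(l_1),\gamma_2(l_2))\ \ge\ d(\bar\gamma_1(l_1),\bar\gamma_2(l_2)),
\]
i.e. $l_3\ge \bar l_3$, where $\bar l_3$ is the length of the third side of the model hinge.

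The third step converts the side comparison $\bar l_3\le l_3 = L(\tilde\gamma_3)$ into the desired angle comparison. Both $(\bar x_3;\bar\gamma_1,\bar\gamma_2)$ and the triangle $\triangle(\tilde x_1,\tilde x_2,\tilde x_3)$ live in the \emph{same} constant-curvature space $\M_\Delta$ and have the same two leg lengths $l_1,l_2$ at the relevant vertex; they differ only in the angle there ($\angle x_3$ versus $\angle\tilde x_3$) and correspondingly in the opposite side ($\bar l_3$ versus $l_3$). Applying Lemma~\ref{thm:trianglesMD} — with the side identifications $l_1\leftrightarrow l_1$, $l_2\leftrightarrow l_2$ and noting the bound $l_1,l_2<\pi/\sqrt\Delta$ needed when $\Delta>0$ follows from the perimeter bound — the inequality $\bar l_3\le l_3$ forces $\angle x_3 \le \angle\tilde x_3$, which is exactly the assertion.

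The main obstacle I expect is the bookkeeping around the hypotheses of Theorem~\ref{thm:ATCT1}: that theorem demands a strict perimeter bound involving $\inj\M$, whereas Theorem~\ref{thm:ATCT2} only assumes $l\le 2\pi/\sqrt\Delta$ plus minimality of $\gamma_1,\gamma_2$. Bridging this gap — arguing that minimality of the two legs plus the perimeter bound suffices, possibly by a limiting argument that shrinks the triangle slightly and passes to the limit, or by invoking a sharper form of the comparison theorem — is the delicate part; the existence of the comparison triangle with prescribed sides and the final application of Lemma~\ref{thm:trianglesMD} are routine by comparison.
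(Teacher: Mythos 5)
Your proposal follows essentially the same route as the paper's own proof: build in $\M_\Delta$ a hinge with leg lengths $l_1,l_2$ and opening angle $\angle x_3$, invoke the hinge comparison (Theorem~\ref{thm:ATCT1}) to get that its third side is at most $l_3$, and then use Lemma~\ref{thm:trianglesMD} to convert the side inequality into $\angle x_3 \leq \angle \tilde x_3$. The hypothesis mismatch you flag between the perimeter/injectivity conditions of Theorem~\ref{thm:ATCT1} and the assumptions of Theorem~\ref{thm:ATCT2} is real, but the paper's proof applies Theorem~\ref{thm:ATCT1} without addressing it either, so your attempt is, if anything, more explicit about that gap.
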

\begin{proof}
In addition to the triangles in $\M$ and $\M_\Delta$ defined in the statement of the theorem, define the hinge $(\tilde x_3; \tilde \gamma_1, \tilde \gamma_2^\prime)$ such that $L(\gamma_2)=L(\tilde \gamma_2^\prime)$ and $\angle x_3 = \angle \tilde x_3$ (see Figure \ref{fig:ATCT2}). Notice that $L(\gamma_1)=L(\tilde \gamma_1)$ by definition. Define $\tilde x_2^\prime = \tilde \gamma_2^\prime(l_2)$. Using Theorem \ref{thm:ATCT1} we have  
$
d(\tilde x_1, \tilde x_2)=d(x_1, x_2) \geq d(\tilde x_1, \tilde x_2^\prime)
$. 
Using Lemma \ref{thm:trianglesMD} we can obtain
$ 
\angle x_3 = \angle \tilde x_1 \tilde x_3 \tilde x_2^\prime \leq \angle \tilde x_1 \tilde x_3 \tilde x_2 = \angle \tilde x_3
$, 
and hence $\angle x_3 \leq \angle \tilde x_3$.
A similar argument can be repeated for the other points $x_1$ and $x_2$.
\end{proof}

 \begin{figure}
   \centering
   \begin{tikzpicture}[mypoint/.style = {fill,inner sep=1.2pt,circle}, myarrow/.style={->,blue!80!black,shorten >= 1pt}, coordpoint/.style={inner sep=-1pt},angle/.style={red!50!black}]
 \node (x3) at (0,0) [mypoint,label=below:$x_3$]{};
 \node (x1) at (2.5,1.8) [mypoint,label=above:$x_1$]{};
 \node (x2) at (3,-.8) [mypoint,label=below:$x_2$]{};

 \draw (x3) to[out=20,in=-130] (x1) 
              node(abeta3a)[pos=0.21,coordpoint]{}
	      node(gamma1) [pos=0.5,sloped, above]{$\gamma_1$};
 \draw (x3) to[out=5,in=140] (x2) 
              node(abeta3b)[pos=0.2,coordpoint]{}
	      node(gamma2) [pos=0.5, sloped, below] {$\gamma_2$};
 \draw (x1) to[out=-120,in=135] (x2);

 \draw[angle] (abeta3a) to[out=-45,in=60] (abeta3b);

 \path (x3) +(13:13mm) node[angle]{$\beta_3$};

 \node at (0.5,1.5) {$\M$};

 \begin{scope}[xshift=3.7cm]
 \node (tx3) at (0,0) [mypoint,label=below:$\tilde x_3$]{};
 \node (tx1) at (3,2.2) [mypoint,label=above:$\tilde x_1$]{};
 \node (tx2) at (-16:3.7) [mypoint,label=below:$\tilde x_2$]{};
 \node (tx2p) at (20:3.7) [mypoint,label=below right:$\tilde x_2^\prime$]{};

 \draw (tx3) to (tx1) 
              node(tabeta3a)[pos=0.21,coordpoint]{}
              node(tabeta3pa)[pos=0.3,coordpoint]{}
	      node(tgamma1) [pos=0.5,sloped, above]{$\tilde\gamma_1$};
 \draw (tx3) to (tx2) 
              node(tabeta3b)[pos=0.2,coordpoint]{}
	      node(tgamma2) [pos=0.5, sloped, below] {$\tilde\gamma_2$};
 \draw[gray] (tx3) to (tx2p) 
              node(tabeta3pb)[pos=0.3,coordpoint]{}
	      node(tgamma2p) [pos=0.65, sloped, below] {$\tilde\gamma_2^\prime$};

 \draw (tx1) to (tx2);

 \draw[angle] (tabeta3a) to[out=-45,in=60] (tabeta3b);
 \draw[angle] (tabeta3pa) to[out=-45,in=110] (tabeta3pb);

 \path (tx3) +(1:13mm) node[angle]{$\tilde\beta_3$};
 \path (tx3) +(27:17mm) node[angle]{$\beta_3$};

 \node at (0.5,1.5) {$\M_\Delta$};
 \end{scope}
\end{tikzpicture}
   \vspace{-5mm}

   \caption{The geodesic triangles and hinges used for the proof of Theorem \ref{thm:ATCT2}}
\label{fig:ATCT2}
\end{figure}
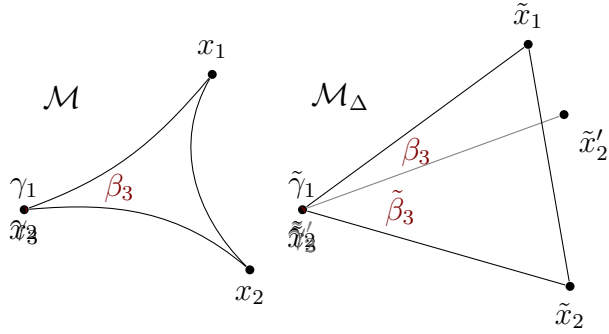

\myparagraph{Orthogonal decomposition of Jacobi fields}
Let $Y$ be a Jacobi field along a normal geodesic $\gamma$. 
The following Propositions shows that $Y$ can be decomposed in two orthogonal Jacobi fields.
\begin{proposition}\label{prop:Jacobidecomp}
  A Jacobi field $Y$ along a geodesic $\gamma$ can be decomposed as $Y=Y^\perp+Y^\parallel$, where $Y^\perp$ and $Y^\parallel$ are Jacobi fields which are, respectively, perpendicular and tangential to $\gamma$.
\end{proposition}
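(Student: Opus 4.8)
The plan is to decompose $Y$ using the Levi-Civita connection along the normal geodesic $\gamma$. First I would define $Y^\parallel(t)=\metric{Y(t)}{\dot\gamma(t)}\dot\gamma(t)$ and $Y^\perp(t)=Y(t)-Y^\parallel(t)$, exactly as in the definition of tangential and perpendicular components given in the Appendix's additional-notation subsection. Since $\gamma$ is a normal geodesic we have $\nabla\dot\gamma=0$ and $\norm{\dot\gamma}=1$, so the function $f(t)=\metric{Y(t)}{\dot\gamma(t)}$ has derivative $\dot f=\metric{\nabla Y}{\dot\gamma}$ (using metric compatibility and $\nabla\dot\gamma=0$), and hence $\ddot f=\metric{\nabla\nabla Y}{\dot\gamma}$.

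The key computation is to show $\ddot f=0$, so that $f(t)=at+b$ is affine and $Y^\parallel(t)=(at+b)\dot\gamma(t)$. Using the Jacobi equation $\nabla\nabla Y=-R(Y,\dot\gamma)\dot\gamma$, we get $\ddot f=-\metric{R(Y,\dot\gamma)\dot\gamma}{\dot\gamma}=-R(Y,\dot\gamma,\dot\gamma,\dot\gamma)$, which vanishes by the symmetry property $R(X,Y,Z,W)=0$ whenever $Z=W$, noted in the Appendix. Therefore $Y^\parallel$ is affine in $t$ times $\dot\gamma$. Next I would verify that any vector field of the form $(at+b)\dot\gamma(t)$ is itself a Jacobi field along $\gamma$: indeed $\nabla\nabla\bigl((at+b)\dot\gamma\bigr)=0$ since $\nabla\dot\gamma=0$ and the coefficient is affine, and $R((at+b)\dot\gamma,\dot\gamma)\dot\gamma=0$ by the same antisymmetry ($R(X,Y)Z=0$ when $X$ is proportional to $Y$); so the Jacobi equation is satisfied. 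Hence $Y^\parallel$ is a Jacobi field tangential to $\gamma$ by construction.

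Finally, since $Y$ is a Jacobi field and $Y^\parallel$ is a Jacobi field, their difference $Y^\perp=Y-Y^\parallel$ is also a Jacobi field (the set of Jacobi fields along $\gamma$ is a vector space, as recalled in the excerpt). By construction $\metric{Y^\perp}{\dot\gamma}=f(t)-f(t)=0$, so $Y^\perp$ is perpendicular to $\gamma$ at every point. This gives the desired decomposition $Y=Y^\perp+Y^\parallel$ into perpendicular and tangential Jacobi fields.

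I do not expect a serious obstacle here; the only point requiring a little care is confirming that the affine-coefficient field $(at+b)\dot\gamma$ genuinely solves the Jacobi equation (rather than merely being tangential), which hinges on $\nabla\dot\gamma=0$ and on the curvature-tensor antisymmetry $R(\dot\gamma,\dot\gamma)\dot\gamma=0$. Everything else is a direct application of metric compatibility and the symmetries of $R$ already collected in the Appendix.
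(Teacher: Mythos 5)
Your proposal is correct and follows essentially the same route as the paper: show that $\metric{Y}{\dot\gamma}$ is affine in $t$ via metric compatibility, the geodesic and Jacobi equations, and the curvature symmetry $R(Y,\dot\gamma,\dot\gamma,\dot\gamma)=0$, then conclude that $Y^\parallel$ is a Jacobi field and hence so is $Y^\perp=Y-Y^\parallel$. The only difference is that you spell out the verification that $(at+b)\dot\gamma$ solves the Jacobi equation, which the paper leaves as ``similar calculations.''
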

\begin{proof}
  The projection of $Y$ along $\dot{\gamma}$ is a function of the form $\metric{Y}{\dot{\gamma}}=at+b$, because
\begin{equation}
 \frac{\de^2}{\de t ^2} \metric{Y(t)}{\dot\gamma(t)}= \dert \metric{\nabla Y(t)}{\dot\gamma(t)}
 = \metric{\nabla\nabla Y(t)}{\dot\gamma(t)}= R(Y,\dot\gamma,\dot\gamma,\dot\gamma) =0.
\end{equation}
In the above we used, in succession, the metric compatibility property of $\nabla$, the definitions of geodesic and Jacobi field, and the properties of the curvature tensor.
 
The constants $a$ and $b$ can be determined using boundary conditions. Similar calculations show that $Y^\parallel=\metric{Y}{\dot{\gamma}}\dot{\gamma}$ is in fact a Jacobi field. It follows that $Y^\perp=Y-Y^\parallel$ is also a Jacobi field.
\end{proof}

\myparagraph{Comparison theorems for Jacobi fields}
We now review versions of the Rauch Comparison Theorems based on the presentation in \cite[pages 388--389]{Chavel:book06}.
\begin{thm}[Rauch Comparison Theorem I]\label{thm:RCT1}
 Let $X$ be a Jacobi field along and orthogonal to a normal geodesic $\gamma(s)$ satisfying $X(0)=0$ and without conjugate points. If the curvature is bounded above by $\Delta$, we have
 \begin{align}
  \metric{\nabla X}{X}&\geq \frac{C_\Delta}{S_\Delta} \norm{X}^2 & 
  \norm{\nabla X(0)}&\leq \frac{\norm{X}}{S_\Delta}\label{eq:rauch}
 \end{align}
\end{thm}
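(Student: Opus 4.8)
The plan is to reduce the statement to a one-dimensional Sturm (Wronskian) comparison for the scalar function $f(s)=\norm{X(s)}$, which is the classical route to Rauch's theorem. Since $X$ is a Jacobi field with $X(0)=0$ along a geodesic without conjugate points, $X(s)\neq 0$ for every $s>0$ in the interval under consideration (and, when $\Delta>0$, one restricts to $s<\pi/\sqrt{\Delta}$ so that $S_\Delta(s)>0$); on that interval $f$ is smooth and strictly positive, so every division performed below is legitimate --- this is exactly what the ``no conjugate points'' hypothesis buys us. First I would compute $f'=\metric{\nabla X}{X}/f$, hence $f f'=\metric{\nabla X}{X}$, and differentiate once more to obtain $f'^2+f f''=\norm{\nabla X}^2+\metric{\nabla\nabla X}{X}$.

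Next I would bound the right-hand side from below. Cauchy--Schwarz gives $\norm{\nabla X}^2\norm{X}^2\geq\metric{\nabla X}{X}^2=f^2 f'^2$, hence $\norm{\nabla X}^2\geq f'^2$. The Jacobi equation $\nabla\nabla X=-R(X,\dot\gamma)\dot\gamma$, together with $X\perp\dot\gamma$, $\norm{\dot\gamma}=1$, and the curvature bound $K_\sigma\leq\Delta$, gives $\metric{\nabla\nabla X}{X}\geq-\Delta\norm{X}^2=-\Delta f^2$. Substituting both estimates yields $f f''\geq-\Delta f^2$, i.e.\ $f''+\Delta f\geq 0$ on the interval.

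I would then compare $f$ with $S_\Delta$, which satisfies $S_\Delta''+\Delta S_\Delta=0$, $S_\Delta(0)=0$, $S_\Delta'(0)=1$, and $S_\Delta'=C_\Delta$. Setting $W(s)=f'(s)S_\Delta(s)-f(s)S_\Delta'(s)$ one has $W(0^+)=0$ and $W'=f''S_\Delta-f S_\Delta''=S_\Delta(f''+\Delta f)\geq 0$, so $W\geq 0$ throughout; dividing by $f S_\Delta>0$ gives $f'/f\geq S_\Delta'/S_\Delta=C_\Delta/S_\Delta$. Multiplying by $f^2=\norm{X}^2$ yields the first inequality $\metric{\nabla X}{X}=f f'\geq(C_\Delta/S_\Delta)\norm{X}^2$. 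For the second, I would integrate $(\log f)'\geq(\log S_\Delta)'$ from $\varepsilon$ to $s$ to get $f(s)/S_\Delta(s)\geq f(\varepsilon)/S_\Delta(\varepsilon)$, and let $\varepsilon\to 0^+$: because $X(0)=0$ the Jacobi field is linear to leading order, $f(\varepsilon)=\varepsilon\norm{\nabla X(0)}+o(\varepsilon)$, while $S_\Delta(\varepsilon)=\varepsilon+o(\varepsilon)$, so the right-hand side tends to $\norm{\nabla X(0)}$, giving $\norm{X(s)}/S_\Delta(s)\geq\norm{\nabla X(0)}$.

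The two differentiations and the Wronskian comparison are routine. The step requiring care is the passage from the sectional-curvature upper bound to the differential inequality $\metric{\nabla\nabla X}{X}\geq-\Delta\norm{X}^2$, keeping the sign conventions for $R$ and for $K_\sigma$ consistent, and the limiting behaviour at $s=0$, where $f$ is only one-sidedly differentiable and one must identify the leading-order coefficient of the Jacobi field with $\nabla X(0)$. Alternatively, the statement can be deduced from the general Rauch Comparison Theorem by comparing $X$ with a Jacobi field in the model space $\M_\Delta$, but the self-contained Sturm argument above is shorter.
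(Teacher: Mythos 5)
Your argument is correct, but it takes a different route from the paper. The paper does not reprove Rauch's estimate: it simply invokes Theorem IX.2.1 of Chavel with the identifications $\eta=X$, $\psi=\norm{\nabla X(0)}S_\Delta$, reads off $\dert\bigl(\norm{\eta}/\psi\bigr)\geq 0$ and $\norm{\eta}\geq\psi$ from that theorem, and uses Lemma~\ref{lm:derJacobi} to handle the boundary behaviour at $s=0$. You instead give a self-contained Sturm--Wronskian comparison for $f(s)=\norm{X(s)}$: the differential inequality $f''+\Delta f\geq 0$ obtained from Cauchy--Schwarz, the Jacobi equation and the curvature bound, the comparison with $S_\Delta$ via the Wronskian $W=f'S_\Delta-fS_\Delta'$, and the limit $f(\varepsilon)/S_\Delta(\varepsilon)\to\norm{\nabla X(0)}$ as $\varepsilon\to 0^+$ (which replaces the paper's appeal to Lemma~\ref{lm:derJacobi}). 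This is essentially a rederivation of the inner workings of the cited result; what it buys is independence from the external reference, at the cost of having to verify the boundary asymptotics and the sign conventions yourself. Your bookkeeping is sound: the no-conjugate-point hypothesis does guarantee $X(s)\neq 0$ for $s>0$ (for $X\not\equiv 0$; the trivial field makes the statement vacuous), the restriction $s<\pi/\sqrt{\Delta}$ when $\Delta>0$ keeps $S_\Delta>0$ and is consistent with how the theorem is applied in the paper (where $l<2r^\ast$), and the key curvature step $\metric{\nabla\nabla X}{X}\geq-\Delta\norm{X}^2$ is the correct consequence of $K_\sigma\leq\Delta$ for a Jacobi field orthogonal to a unit-speed geodesic, as you flag.
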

For the proof we will need the following Lemma \cite[pag. 387]{Chavel:book06}
\begin{lemma}\label{lm:derJacobi}
 Let $X(t)$ be a vector field along a geodesic $\gamma: t \in [0,\beta] \to \M$. If $X(0)$ and $\nabla X(0)$ are linearly dependent, or if $X(0)=0$, then $\dert\norm{X}(0)=\norm{\nabla X(0)}$.
\end{lemma}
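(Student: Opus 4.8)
The plan is to split the proof according to the two hypotheses, since the only real subtlety is that $t\mapsto\norm{X(t)}=\metric{X(t)}{X(t)}^{1/2}$ need not be differentiable wherever $X$ vanishes, so the case $X(0)=0$ cannot be handled by straightforward differentiation of a composition.

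First I would dispose of the case $X(0)\neq0$, in which linear dependence means $\nabla X(0)=\lambda X(0)$ for some scalar $\lambda$. By continuity $\norm{X(t)}>0$ for $t$ in a neighbourhood of $0$, hence $\norm{X}$ is smooth there and, using the metric compatibility of $\nabla$ recalled in the Appendix,
\[
\dert\norm{X(t)}=\frac{1}{2\norm{X(t)}}\dert\metric{X(t)}{X(t)}=\frac{\metric{\nabla X(t)}{X(t)}}{\norm{X(t)}}\leq\norm{\nabla X(t)},
\]
the last step by Cauchy--Schwarz. Evaluating at $t=0$ and substituting $\nabla X(0)=\lambda X(0)$ turns this into the equality $\dert\norm{X}(0)=\lambda\norm{X(0)}=\norm{\nabla X(0)}$, with the sign convention, implicit in the way the Lemma is applied, that $\lambda\geq0$.

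For the remaining case $X(0)=0$, I would transfer the computation to the fixed inner-product space $T_{\gamma(0)}\M$ via parallel transport along $\gamma$. Set $Z(t)=\tau_t^0X(t)\in T_{\gamma(0)}\M$. Since parallel transport along $\gamma$ is a linear isometry depending smoothly on $t$ and $X$ is smooth, $Z$ is a smooth curve in the vector space $T_{\gamma(0)}\M$ with $Z(0)=X(0)=0$ and, by the very definition of the covariant derivative, $\dot Z(0)=\nabla X(0)$; moreover $\norm{X(t)}=\norm{Z(t)}$ because $\tau$ is an isometry. A first-order Taylor expansion in the now-constant metric gives $Z(t)=t\,\nabla X(0)+o(t)$ as $t\to0^{+}$, whence
\[
\dert\norm{X}(0)=\lim_{t\to0^{+}}\frac{\norm{Z(t)}}{t}=\lim_{t\to0^{+}}\norm{\nabla X(0)+o(1)}=\norm{\nabla X(0)}.
\]

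The main (indeed only) obstacle is the non-differentiability of $\norm{X}$ at a zero of $X$, which rules out a one-line argument and forces the limiting computation via parallel transport; once one works in the constant inner-product space $T_{\gamma(0)}\M$ everything reduces to a Taylor expansion there, and the other case is pure bookkeeping with metric compatibility and Cauchy--Schwarz. Note that no curvature hypotheses on $\M$ enter: the statement concerns only the covariant derivative along a geodesic.
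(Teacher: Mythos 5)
The paper never proves this lemma itself --- it is imported directly from Chavel \cite[pag.~387]{Chavel:book06} --- so there is no internal proof to compare against; judged on its own, your argument is correct and is essentially the standard one. The case $X(0)=0$, which is the only case the paper actually uses (in the proof of Theorem~\ref{thm:RCT1} the Jacobi fields satisfy $X(0)=0$), is handled cleanly: pulling $X$ back to $T_{\gamma(0)}\M$ by parallel transport gives a smooth curve $Z$ in a fixed inner-product space with $Z(0)=0$, $\dot Z(0)=\nabla X(0)$ and $\norm{Z(t)}=\norm{X(t)}$, and the one-sided derivative $\lim_{t\to0^+}\norm{Z(t)}/t=\norm{\nabla X(0)}$ then follows from the first-order expansion; this is exactly the computation behind Chavel's statement (where it serves as the L'H\^opital step in the Rauch comparison argument), and you are right that neither the geodesic hypothesis nor any curvature bound is needed. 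Your treatment of the other case is also accurate in substance: when $X(0)\neq0$ and $\nabla X(0)=\lambda X(0)$, metric compatibility gives $\dert\norm{X}(0)=\lambda\norm{X(0)}$, which equals $\norm{\nabla X(0)}=\abs{\lambda}\norm{X(0)}$ only when $\lambda\geq0$; for $\lambda<0$ (take $X(t)=(1-t)E(t)$ with $E$ a parallel unit field) the identity as literally stated fails, so the ``sign convention'' you flag is a defect of the statement rather than of your proof, and it is immaterial for the paper since the lemma is only ever invoked with $X(0)=0$.
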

\begin{proof}[Proof of Theorem~\ref{thm:RCT1}]
The proof is simply an adaptation of Theorem IX.2.1 in \cite{Chavel:book06} to our goals, where we identify $\eta=X$, $\psi=\norm{\nabla X(0)}S_\Delta$ and $\delta=\Delta$. In particular, that Theorem states that
$ 
 \dert\frac{\norm{\eta}}{\psi}=\frac{1}{\psi^2}\left(\frac{\de \norm{\eta}}{\de t}\psi - \eta\frac{\de \psi}{\de t}\right) \geq 0
$, 
which implies
\begin{equation}
 \frac{\de \norm{\eta}}{\de t}\psi - \eta\frac{\de\psi}{\de t}\geq 0 \implies \frac{\frac{\de \norm{\eta}}{\de t}}{\norm{\eta}}\geq \frac{\frac{\de \psi}{\de t}}{\psi}
\implies \frac{\metric{\nabla \eta}{\eta}}{\norm{\eta}^2} \geq \frac{C_\Delta}{S_\Delta}.
\end{equation}
With the above, the first equality of \eqref{eq:rauch} follows by Lemma~\ref{lm:derJacobi}.

The results in \cite{Chavel:book06} also state that $\norm{\eta}\geq\psi$, which is equivalent to the second part of \eqref{eq:rauch}. 
\end{proof}

\begin{thm}[Rauch Comparison Theorem II]\label{thm:RCT2}
 Let $X$ be a Jacobi field along and orthogonal to a normal geodesic $\gamma(s)$ satisfying $X(0)=0$ and without conjugate points. If the curvature is bounded below by $\delta$, we have
 \begin{align}
  \metric{\nabla X}{X}&\leq \frac{C_\delta}{S_\delta} \norm{X}^2 &
  \norm{\nabla X(0)}&\geq \frac{\norm{X}}{S_\delta}
 \end{align}
\end{thm}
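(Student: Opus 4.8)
The plan is to run the proof of Theorem~\ref{thm:RCT1} in reverse: there the \emph{upper} bound $K\le\Delta$ was used to conclude that $X$ grows \emph{at least} as fast as its model-space counterpart, and here the \emph{lower} bound $K\ge\delta$ will give that $X$ grows \emph{at most} as fast. Concretely, I would set $\psi(t)=\norm{\nabla X(0)}\,S_\delta(t)$, which is exactly the norm of the Jacobi field in the constant-curvature model $\M_\delta$ that vanishes at $t=0$ and has the same initial covariant-derivative norm as $X$. Before comparing one must know that $\psi>0$ on the open interval under consideration: this is immediate for $t>0$ when $\delta\le 0$, while for $\delta>0$ the hypothesis that $\gamma$ carries no point conjugate to $\gamma(0)$ forces $L(\gamma)<\pi/\sqrt{\delta}$ (a standard consequence of $K\ge\delta$), so that $S_\delta(t)>0$ throughout.

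The comparison step is the lower-bound analogue of the estimate borrowed from \cite{Chavel:book06} in the proof of Theorem~\ref{thm:RCT1}: with the curvature bounded \emph{below} by $\delta$, and identifying $\eta=X$, $\psi=\norm{\nabla X(0)}\,S_\delta$, one obtains
\begin{equation}
\dert\frac{\norm{X}}{\psi}\le 0 ,
\end{equation}
which immediately yields $\norm{X}\le\psi$ and, after expanding the derivative and dividing by $\psi^2>0$,
\begin{equation}
\frac{\dert\norm{X}}{\norm{X}}\le\frac{\dert\psi}{\psi}=\frac{C_\delta}{S_\delta} .
\end{equation}
By metric compatibility of $\nabla$ one has $\dert\norm{X}=\metric{\nabla X}{X}/\norm{X}$, so multiplying through by $\norm{X}^2$ gives the first claimed inequality $\metric{\nabla X}{X}\le (C_\delta/S_\delta)\norm{X}^2$ for $t>0$; the behaviour at $t=0$ is handled exactly as in Theorem~\ref{thm:RCT1}, via Lemma~\ref{lm:derJacobi} together with the fact that both sides vanish there. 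The second claimed inequality is simply $\norm{X}\le\psi=\norm{\nabla X(0)}\,S_\delta$ rewritten as $\norm{\nabla X(0)}\ge\norm{X}/S_\delta$.

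The only point requiring genuine care is the \emph{direction} of the comparison: one must invoke the version of the Chavel/Rauch estimate in which the hypothesis is $K\ge\delta$ and the conclusion is that $\norm{X}/\psi$ is nonincreasing, rather than the nondecreasing statement used for Theorem~\ref{thm:RCT1}. A secondary technical point is keeping all denominators strictly positive, which is precisely what the conjugate-point discussion above secures. With those two observations in place, the rest of the argument is a verbatim transcription of the proof of Theorem~\ref{thm:RCT1}.
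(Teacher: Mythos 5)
Your argument is correct and follows essentially the same route as the paper: the paper's proof simply cites the lower-curvature-bound comparison estimate (Theorem IX.2.2 in Chavel) with the identification $\eta=X$, $\psi=\norm{\nabla X(0)}S_\delta$, $\kcurv=\delta$, which is exactly the reversed monotonicity of $\norm{X}/\psi$ that you invoke and then unpack as in Theorem~\ref{thm:RCT1}. Your extra care about $S_\delta>0$ (via the no-conjugate-point hypothesis when $\delta>0$) and the $t=0$ limit via Lemma~\ref{lm:derJacobi} is consistent with, and slightly more explicit than, the paper's one-line proof.
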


\begin{proof}
  This Theorem is simply a restatement of Theorem IX.2.2 in \cite{Chavel:book06} with the identification $\eta=X$, $\psi=\norm{\nabla X(0)}S_\delta$ and $\kcurv=\delta$.
\end{proof}

\subsection{Derivative of the distance between two points on a geodesic hinge and proof of Lemma~\ref{thm:derdistancepair}}
\label{sc:firstderdistance}
This section is devoted to build results on the derivative of the distance between two points moving on the sides of a geodesic hinge, with the final goal of providing a proof for Lemma~\ref{thm:derdistancepair}. We will first obtain expressions in terms of angles between geodesics for general manifolds. 

Let $x_1$, $x_2\neq x_1$, and $y$ be three points in $\M$ such that $d_i=d(x_i,y)$ satisfies $0<d_i<r^\ast$, $i=1,2$, where $r^\ast$ is defined in \eqref{eq:rstar}.
Define the geodesic hinge $(y;\gamma_1,\gamma_2)$, where the sides are defined by the conditions $\gamma_1(0)=\gamma_2(0)=y$, $\gamma_1(1)=x_1$ and $\gamma_2(1)=x_2$. For each value of $t$, $0<t\leq 1$, define the minimal geodesic segment $\gamma_{12,t}(s)$ joining $\gamma_1(t)$ to $\gamma_2(t)$ (see Figure \ref{fig:geodtriang}). Note that, since $d_i<r^\ast$, $i=1,2$, by the triangular inequality we have that $d(x_1,x_2)<\inj_{x_1}\M$, therefore $\gamma_{12,t}(s)$ is uniquely defined (up to parametrization) for $t \in (0,1+\epsilon)$, where $\epsilon$ is small enough (so that $(1+\epsilon)d_i<r^\ast$, $i=1,2$). Denote the length of the geodesic segment $\gamma_{12,t}$ by $\phi_{12}(t)=L(\gamma_{12,t})$, which is nothing but the distance between $\gamma_1(t)$ and $\gamma_2(t)$ for a specific $t$. Our goal is to show that the derivative of $\phi^2_{12}$ is strictly positive on $t\in (0,1]$. Notice that $\gamma_{12,t}$ is defined for $t \in (0,1+\epsilon)$, hence the  derivative is well defined for $t=1$.

The first step is to obtain an expression for $\frac{\de \phi_{12}}{\de t}$.
\begin{proposition}\label{prop:dertriangdist}
 For a given $t_0 \in (0,1]$, consider the geodesic triangle $\triangle(y,\gamma_1(t_0),\gamma_2(t_0))$  and let $\beta_i$ be the angle at $\gamma_i(t_0)$ (see Figure \ref{fig:geodtriang}). Then 
$ 
\left.\frac{\de \phi_{12}}{\de t}\right|_{t=t_0}=d_1 \cos\beta_1+d_2\cos\beta_2
$. 
\end{proposition}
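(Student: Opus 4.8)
The plan is to write $\phi_{12}^2(t)=d^2(\gamma_1(t),\gamma_2(t))$ and differentiate it by means of the formula $\tfrac12\grad_x d^2(x,z)=-\log_x z$ recalled in \S\ref{sec:reviewRiemanniangeometry}, then read off the result from the geometry of the hinge. Set $p(t)=\gamma_1(t)$ and $q(t)=\gamma_2(t)$; since each $\gamma_i$ is a minimal geodesic with $\gamma_i(0)=y$, $\gamma_i(1)=x_i$ parametrized on $[0,1]$, we have $\norm{\dot\gamma_i(t)}\equiv d_i$.

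Before differentiating I would dispose of two regularity points, valid for $t\in(0,1]$. First, since $d_i=d(x_i,y)<r^\ast\le\tfrac12\inj\M$, the triangle inequality gives $d(p(t),q(t))\le d_1+d_2<2r^\ast\le\inj\M$, so $q(t)$ lies in the injectivity domain of $p(t)$ and all the points involved lie in a convex ball around $y$; hence $d^2(\cdot,\cdot)$ is smooth near $(p(t_0),q(t_0))$, $\log_{p(t_0)}q(t_0)$ is well defined, and the gradient formula applies in each argument. Second, $p(t_0)\neq q(t_0)$: if they coincided for some $t_0<1$, uniqueness of minimal geodesics inside the convex ball would force $\gamma_1\equiv\gamma_2$ on $[0,t_0]$ and hence $x_1=x_2$, contradicting the standing assumption $x_2\neq x_1$; and for $t_0=1$ we have $p(1)=x_1\neq x_2=q(1)$. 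Thus $\phi_{12}(t_0)>0$, so $\phi_{12}=\sqrt{\phi_{12}^2}$ is differentiable at $t_0$ with $\tfrac{\de}{\de t}\phi_{12}^2=2\phi_{12}\,\tfrac{\de\phi_{12}}{\de t}$.

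I would then apply the chain rule together with the gradient formula in both slots to get, at $t=t_0$, $\tfrac{\de}{\de t}\phi_{12}^2=-2\metric{\log_{p(t_0)}q(t_0)}{\dot\gamma_1(t_0)}-2\metric{\log_{q(t_0)}p(t_0)}{\dot\gamma_2(t_0)}$, and identify the two inner products with the quantities in the statement. The vector $\dot\gamma_1(t_0)$ has norm $d_1$ and points away from $y$ along $\gamma_1$, so $-\dot\gamma_1(t_0)$ is the tangent at $\gamma_1(t_0)$ of the geodesic side toward $y$, while $\log_{p(t_0)}q(t_0)$ has norm $\phi_{12}(t_0)$ and is the tangent at $\gamma_1(t_0)$ of the side toward $\gamma_2(t_0)$. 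By the definition of $\beta_1=\angle(y,\gamma_1(t_0),\gamma_2(t_0))$, $\cos\beta_1=\metric{-\dot\gamma_1(t_0)}{\log_{p(t_0)}q(t_0)}/\bigl(d_1\phi_{12}(t_0)\bigr)$, i.e.\ $\metric{\log_{p(t_0)}q(t_0)}{\dot\gamma_1(t_0)}=-d_1\phi_{12}(t_0)\cos\beta_1$; the symmetric computation at $\gamma_2(t_0)$ gives $\metric{\log_{q(t_0)}p(t_0)}{\dot\gamma_2(t_0)}=-d_2\phi_{12}(t_0)\cos\beta_2$. Substituting and cancelling the common factor $2\phi_{12}(t_0)>0$ yields $\tfrac{\de\phi_{12}}{\de t}\big|_{t=t_0}=d_1\cos\beta_1+d_2\cos\beta_2$.

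The routine part is the chain rule; the only genuinely delicate items are the two bookkeeping points in the second paragraph — smoothness and non-degeneracy of $\phi_{12}$ on $(0,1]$, both resting on the convexity of small geodesic balls guaranteed by $r^\ast$ — together with keeping the orientation conventions straight so that the cosines enter with the correct sign. An equivalent route avoiding $\grad d^2$ is the first-variation-of-arc-length formula for the variation $(t,s)\mapsto\gamma_{12,t}(s)$: the interior integral vanishes because each $\gamma_{12,t}$ is a geodesic, leaving exactly the boundary terms $\metric{V}{T}$ at $s=0,1$ with variation-field values $V(0)=\dot\gamma_1(t_0)$ and $V(1)=\dot\gamma_2(t_0)$, which unwind to the same expression.
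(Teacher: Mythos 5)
Your argument is correct and follows essentially the same route as the paper's proof: a chain-rule computation expressing the derivative through the log-map gradient of the distance in each argument and identifying the resulting inner products with $d_i\cos\beta_i$ via the angle definitions. The only (harmless) differences are that you differentiate $\phi_{12}^2$ and divide by $2\phi_{12}>0$ rather than differentiating $\phi_{12}$ directly, and that you spell out the smoothness and non-degeneracy points ($\gamma_1(t_0)\neq\gamma_2(t_0)$, points staying in a convex ball) that the paper leaves implicit.
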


\begin{proof}
Let $d(x_1,x_2)$ be the distance function on $\M$.
By the definition of gradient we have
\begin{multline}
\left.\frac{\de \phi_{12}}{\de t}\right|_{t=t_0}= \metric{\grad_{x_1}d(\gamma_1(t_0),\gamma_2(t_0))}{\dot\gamma_1(t_0)}+\metric{\grad_{x_2}d(\gamma_1(t_0),\gamma_2(t_0))}{\dot\gamma_2(t_0)}\\
=\metric{\frac{-\log_{\gamma_1(t_0)}\gamma_2(t_0)}{\norm{\log_{\gamma_1(t_0)}\gamma_2(t_0)}}}{\dot\gamma_1(t_0)}
+ \metric{\frac{-\log_{\gamma_2(t_0)}\gamma_1(t_0)}{\norm{\log_{\gamma_2(t_0)}\gamma_1(t_0)}}}{\dot\gamma_2(t_0)}.
\end{multline}
Considering that $\norm{\dot\gamma_i(t)}=d_i$, the claim follows.
\end{proof}

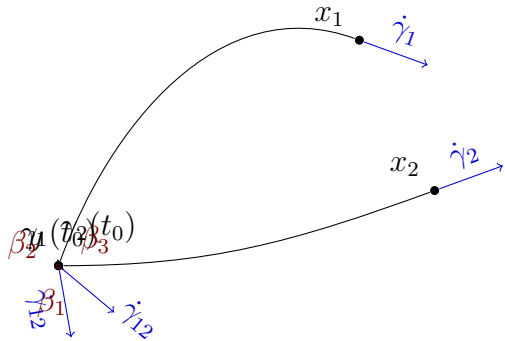
\begin{figure}[b]
\vspace{-2mm}
\centering
\begin{tikzpicture}[mypoint/.style = {fill,inner sep=1.2pt,circle}, myarrow/.style={->,blue!80!black,shorten >= 1pt}, coordpoint/.style={inner sep=-1pt},angle/.style={red!50!black}]
 \node (y) at (0,0) [mypoint,label=above left:$y$]{};
 \node (x1) at (4,3) [mypoint,label=above left:$x_1$]{};
 \node (x2) at (5,1) [mypoint,label=above left:$x_2$]{};

 \draw (y) to[out=70,in=160] (x1) 
	      node(abeta1a)[pos=0.6,coordpoint]{}
	      node(abetap1a)[pos=0.71,coordpoint]{}
	      node(gamma1) [pos=0.65,mypoint,label=above:$\gamma_1(t_0)$] {} ;
 \draw[label distance=1.5mm] (y) to[out=0,in=-160] (x2) 
	      node(abeta2a)[pos=0.6,coordpoint]{}
	      node(abetap2a)[pos=0.71,coordpoint]{}
	      node(gamma2) [pos=0.65,mypoint, label=above right:{$\!\!\!\!\gamma_2(t_0)$}] {};
 \draw (gamma1) to[out=-40, in=100] (gamma2)
	      node(abeta1b) [pos=0.06,coordpoint]{}
	      node(abeta2b) [pos=0.94,coordpoint]{}
	      node(abetap2b) [pos=1.1,coordpoint]{};
 \draw[myarrow] (x1) -- +(-20:1cm) node[pos=0.5,sloped,above]{$\dot\gamma_1$};
 \draw[myarrow] (x2) -- +(20:1cm) node[pos=0.5,sloped,above]{$\dot\gamma_2$};
 \draw[myarrow] (gamma1) -- +(-40:1cm) node[pos=1.3,sloped,above=-5pt]{$\dot\gamma_{12}$};
 \draw[myarrow] (gamma2) -- +(-80:1cm) node[pos=0.5,sloped,below]{$\dot\gamma_{12}$};

 \draw[angle] (abeta1a) to[out=-45,in=-135] (abeta1b);
 \draw[angle] (abeta2a) to[out=100,in=180] (abeta2b);

 \path (gamma1) +(-100:5mm) node[angle]{$\beta_1$};
 \path (gamma2) +(150:5.5mm) node[angle]{$\beta_2$};
 \path (y) +(35:6mm) node[angle]{$\beta_3$};
\end{tikzpicture}
\caption{The geodesic triangle used to study the derivative of the distance between $\gamma_1(t)$ and $\gamma_2(t)$}
\label{fig:geodtriang}
\end{figure}

The next step is to consider the particular case of manifolds with constant curvature $\Delta\geq0$ (for our purposes, the case $\Delta<0$ will be covered by the case $\Delta=0$). 
We have the following.
\begin{proposition}\label{prop:posderMD}
Let $\M$ be of constant curvature $\Delta\geq0$. Using the same definitions given at the beginning of the section, we have $\frac{\de \phi_{12}}{\de t}(t)>0$ for $t\in(0,1]$.
\end{proposition}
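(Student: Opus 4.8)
The plan is to reduce everything to Proposition~\ref{prop:dertriangdist}, which already gives $\left.\frac{\de \phi_{12}}{\de t}\right|_{t_0}=d_1\cos\beta_1+d_2\cos\beta_2$, where $\beta_1,\beta_2$ are the angles of the geodesic triangle $\triangle(y,\gamma_1(t_0),\gamma_2(t_0))$ at the vertices $\gamma_1(t_0)$ and $\gamma_2(t_0)$ (so that the laws of cosines of Table~\ref{tab:cosinelaws} apply to them). This triangle has sides of lengths $t_0 d_1$ (from $y$ to $\gamma_1(t_0)$), $t_0 d_2$ (from $y$ to $\gamma_2(t_0)$), and $\phi:=\phi_{12}(t_0)$. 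Since $0<d_i<r^\ast$ and $t_0\in(0,1]$, the first two side lengths lie in $(0,\tfrac{\pi}{2\sqrt\Delta})$ when $\Delta>0$, and $\phi\le t_0(d_1+d_2)<2r^\ast\le\tfrac{\pi}{\sqrt\Delta}$; moreover $\phi>0$ for every $t_0\in(0,1]$, since a coincidence $\gamma_1(t_0)=\gamma_2(t_0)$ would join this point to $y$ by two minimal geodesics of length below $\inj \M$, forcing $\gamma_1\equiv\gamma_2$ and hence $x_1=x_2$, contrary to hypothesis.

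For $\Delta=0$ the triangle is Euclidean, and combining the two Euclidean laws of cosines at $\gamma_1(t_0)$ and $\gamma_2(t_0)$ yields the elementary projection identity $\phi = t_0 d_1\cos\beta_1 + t_0 d_2\cos\beta_2$, so that $\left.\frac{\de \phi_{12}}{\de t}\right|_{t_0}=\phi/t_0>0$. For $\Delta>0$ I would instead apply the spherical laws of cosines of Table~\ref{tab:cosinelaws} at the vertices $\gamma_1(t_0)$ and $\gamma_2(t_0)$ to solve for $\cos\beta_1$ and $\cos\beta_2$, substitute them into $d_1\cos\beta_1+d_2\cos\beta_2$, and clear the strictly positive factors $\sk(\phi)$ and $\sk(t_0 d_1)\sk(t_0 d_2)$. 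This reduces the claim to the inequality
\[
A-\ck(\phi)\,B>0,
\]
where $A:=d_1\sk(t_0 d_2)\ck(t_0 d_2)+d_2\sk(t_0 d_1)\ck(t_0 d_1)$ and $B:=d_1\sk(t_0 d_2)\ck(t_0 d_1)+d_2\sk(t_0 d_1)\ck(t_0 d_2)$.

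To finish, observe that on the ranges identified above every factor $\sk(\cdot),\ck(\cdot)$ appearing in $A$ and $B$ is strictly positive, so $A,B>0$; and $0<\ck(\phi)<1$ because $0<\sqrt\Delta\,\phi<\pi$. Hence $A-\ck(\phi)B>A-B=\big(\ck(t_0 d_2)-\ck(t_0 d_1)\big)\big(d_1\sk(t_0 d_2)-d_2\sk(t_0 d_1)\big)$. Assuming without loss of generality $d_1\le d_2$, the first factor is $\le 0$ since $\ck=\cos(\sqrt\Delta\,\cdot)$ is decreasing on $(0,\tfrac{\pi}{2\sqrt\Delta})$, and the second is $\le 0$ since $d\mapsto\sk(t_0 d)/d=\sqrt\Delta\,t_0\,(\sin u)/u$ with $u=\sqrt\Delta\,t_0 d$ is decreasing (because $(\sin u)/u$ decreases on $(0,\pi)$ and here $u<\tfrac{\pi}{2}$); thus $A-B\ge 0$, giving $A-\ck(\phi)B>0$ and therefore $\left.\frac{\de \phi_{12}}{\de t}\right|_{t_0}>0$. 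The only real work is the bookkeeping in the $\Delta>0$ step: tracking which trigonometric quantities are positive (this is precisely where the hypotheses $d_i<r^\ast$ and $\phi<\pi/\sqrt\Delta$ enter) and massaging the law-of-cosines expressions into the factored form $A-B$; once that is in place the monotonicity facts make the sign determination immediate.
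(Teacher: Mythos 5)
Your proposal is correct, but it travels a different route through the constant-curvature trigonometry than the paper does. The paper never invokes Proposition~\ref{prop:dertriangdist} here: instead it writes $\ck(\phi_{12}(t))$ via the law of cosines at the \emph{apex} $y$, whose angle $\beta_3$ is constant in $t$, differentiates that identity directly, and reduces positivity of $-\frac{\de}{\de t}\cos(\sqd\,\phi_{12})$ to the comparison $\tan(\sqd d_1 t)>\tan(\sqd d_2 t)$. You instead start from the first-derivative formula $d_1\cos\beta_1+d_2\cos\beta_2$ of Proposition~\ref{prop:dertriangdist}, eliminate the two \emph{base} angles with static laws of cosines at the moving vertices, and establish $A-\ck(\phi)B>0$ through the factorization $A-B=\bigl(\ck(t_0d_2)-\ck(t_0d_1)\bigr)\bigl(d_1\sk(t_0d_2)-d_2\sk(t_0d_1)\bigr)$ together with monotonicity of $\cos$ and of $u\mapsto \sin u/u$. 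Your version makes explicit two points the paper leaves implicit: why $\gamma_1(t_0)\neq\gamma_2(t_0)$ (needed both for your use of Proposition~\ref{prop:dertriangdist} and, in the paper's argument, for dividing by $\sin(\sqd\,\phi_{12})$), and how the hypothesis $d_i<r^\ast$ enters through the positivity of the trigonometric factors; it also covers $d_1=d_2$ cleanly, which the paper's ``WLOG $d_1>d_2$'' glosses over. The paper's version is slightly shorter because the apex angle does not move with $t$. Two cosmetic slips in your write-up, neither of which affects validity: the bound $0<\ck(\phi)$ need not hold (only $\ck(\phi)<1$ is true and is all you use; if $\ck(\phi)\leq 0$ then $A-\ck(\phi)B\geq A>0$ anyway), and $\sk(t_0 d)/d=t_0\,\frac{\sin u}{u}$ with $u=\sqd\, t_0 d$, i.e.\ the constant is $t_0$ rather than $\sqd\, t_0$, though the monotonicity conclusion is unchanged.
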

\begin{proof}
Let $l_i(t)=d_it$, $i=1,2$. In the case $\Delta=0$, from the cosine law we have $\phi_{12}(t)=\abs{t}\sqrt{d_1^2+d_2^2-2d_1d_2\cos\beta_3}$. The claim then easily follows.
For the case $\Delta>0$, as argued before, the triangular inequality implies $d(\gamma_1(t),\gamma_2(t))<\frac{\pi}{\sqd}$. In turn, this means that $\sin(\sqd \phi_{12})>0$. Instead of the derivative of $\phi_{12}(t)$, it will be convenient to use the derivative 
$ 
\dert\cos(\sqd \phi_{12})=-\sin(\sqd \phi_{12})\frac{\de \phi_{12}}{\de t}
$. 
From the above, $\frac{\de \phi_{12}}{\de t}>0$ if and only if $-\dert\cos(\sqd \phi_{12})>0$, hence the two expressions are equivalent for our purposes.

Using the cosine law for $\Delta>0$, we get
\begin{multline}
\label{eq:dertcos}
-\dert\cos(\sqd \phi_{12})
= -\dert (\cd(l_1(t))\cd(l_2(t))+\sd(l_1(t))\sd(l_2(t))\cos\beta_i) \\
= \sqd\biggl(\bigl(d_1 - d_2 \cos(\alpha)\bigr) \sd(d_1 t) \cd(d_2 t)
 + \bigl(d_2 - d_1 \cos(\alpha)\bigl) \cd(d_1 t) \sd(d_2 t)\biggr)
\end{multline}

Assume, without loss of generality, $d_1>d_2$ (if not, just swap the indexes throughout the proof) and recall $0<t<\frac{\pi}{2 \sqd d_1}$. This implies that $\sd(d_i t), \cd(d_i t)>0$ for $i=1,2$. Now, the condition $-\dert\cos(\sqd \phi_{12})>0$ can be manipulated as follows:
\begin{equation}
\bigl(d_1 - d_2 \cos(\alpha)\bigr) \sd(d_1 t) \cd(d_2 t)
 + \bigl(d_2 - d_1 \cos(\alpha)\bigl) \cd(d_1 t) \sd(d_2 t) > 0
\end{equation}
\begin{equation}
\frac{\sd(d_1 t) \cd(d_2 t)}{\cd(d_1 t) \sd(d_2 t)} > \frac{d_1 \cos(\alpha) - d_2}{d_1 - d_2 \cos(\alpha)}
\end{equation}
At this point, note that the RHS is always less or equal to one. Therefore, sufficient conditions for $-\dert\cos(\sqd \phi_{12})>0$ are given by
\begin{equation}
\frac{\sd(d_1 t) \cd(d_2 t)}{\cd(d_1 t) \sd(d_2 t)} > 1 \implies
\frac{\sd(d_1 t)}{\cd(d_1 t)} > \frac{\sd(d_2 t)}{\cd(d_2 t)} \implies
\tan(\sqd d_1 t) > \tan(\sqd d_2 t).
\end{equation}
Due to the monotonicity properties of the $\tan$ function, this condition is always satisfied under the assumptions that we made before, \ie $d_2<d_1<r^\ast$ and $t \in (0, 1]$. In other words, $-\dert\cos(\sqd \phi_{12})>0$, and therefore $\frac{\de \phi_{12}}{\de t}>0$ and the claim follows.
\end{proof}

We have now all the elements necessary to prove Lemma~\ref{thm:derdistancepair}.

\begin{proof}[Proof of Lemma \ref{thm:derdistancepair}]
 We first consider the case where the three points are all distinct. Notice that showing $\dert \frac{\phi_{12}^2}{2}=\phi_{12}\frac{\de \phi_{12}}{\de t}>0$ is equivalent to showing $\frac{\de \phi_{12}}{\de t}>0$. For any $t_0\in(0,1]$ consider the geodesic triangle $T=\triangle \bigl(y,\gamma_1(t_0),\gamma_2(t_0)\bigr)$. Build a triangle $T_\Delta=\triangle (\tilde y, \tilde x_1, \tilde x_2)$ in $\M_\Delta$ having the same side lengths as $T$. Define the geodesics $\tilde \gamma_i(t): t \to \M_\Delta$ such that $\tilde \gamma_i(0)=\tilde y$ and $\tilde \gamma_i(t_0)=\tilde x_i$, $i=1,2$. Define also $\tilde \phi_{12}(t)=d\bigl(\gamma_1(t),\gamma_2(t)\bigr)$. Let $\beta_i=\angle \gamma_i(t_0)$ and $\tilde \beta_i=\angle \tilde\gamma_i(t_0)$, $i=1,2$. According to Theorem \ref{thm:ATCT2}, $\beta_i\leq \tilde \beta_i$, $i=1,2$. Using Proposition \ref{prop:dertriangdist} and Proposition \ref{prop:posderMD} (if $\Delta<0$, use $\Delta=0$), this implies $\frac{\de \phi_{12}}{\de t}\geq\frac{\de \tilde\phi_{12}}{\de t}>0$, and the claim is shown.
 Next, consider the case $x_2=y$, $x_1\neq x_2$. Then $\phi_{12}=d^2(x_1,y)t$ and the claim can be shown by direct computation. The same applies by swapping the roles of $x_1$ and $x_2$.
 Finally, if $x_1=x_2=y$, then $\phi_{12}\equiv 0$ and the claim is trivial.
\end{proof}

\subsection{Bounds on the Hessian of the squared distance between two points}
\label{sc:secondderdistancegen}
In this section we compute and give bounds on the second derivative of the distance (and distance squared) between two points moving on geodesics. First, we derive a general expression that depends only on the relative velocities and angles between geodesics. Then, 
 we compute concrete bounds for the case of manifolds with bounded sectional curvature. We refer to \cite{Tron:TR11} for the case of manifolds with constant curvature. From the bounds on the second derivative, we can then obtain the bound on the Hessian of the squared distance, which is used in Theorem~\ref{thm:boundhessiandistance}.

\subsubsection{The general case}
\label{sc:secondderdistance}
Define two geodesics $\gamma_1, \gamma_2 : (-\epsilon,\epsilon) \to \M$ such that there exist a minimal geodesic $\gamma_{12,t}(s)$ joining $\gamma_1(t)$ to $\gamma_2(t)$ for all $t \in (-\epsilon,\epsilon)$. Using the same notation as in Section \ref{sc:firstderdistance} of this Appendix, we denote the length of the geodesic segment $\gamma_{12,t}$ by $\phi_{12}(t)=L(\gamma_{12,t})$, which is nothing but the distance between $\gamma_1(t)$ and $\gamma_2(t)$ for a specific $t$. In this section we will find bounds on the second derivative of $\phi_{12}(t)$ around $t_0=0$.

Define the geodesic variation $\alpha: [0,1] \times [a,b] \to \M$, such that the map $s \mapsto \alpha(t_0,s)$ traces the geodesic $\gamma_{12,t_0}(s)$, $\norm{\dot\gamma_{12,t}}=1$. Define $\partial_s\alpha=(D\alpha)\partial_s$ and $\partial_t\alpha=(D\alpha)\partial_t$, where $\partial_s$ (resp., $\partial_t$) denotes the partial derivation operator with respect to the variable $s$ (resp., $t$). Since $\alpha(t,s)$ traces geodesics, the vector field $X(s)=\partial_s \alpha |_{t=t_0}$ is a Jacobi field along $\gamma_{12,t}$ \cite[page 36]{Sakai:book96}. 

We have the following Theorem for computing the second derivative of the distance.
\begin{thm} \label{thm:secondder}
 Using the notation above, we have
\begin{equation}
\label{eq:secondderdistance}
 \left.\frac{\de^2 \phi_{12}}{\de t^2}\right|_{t=t_0}= \left.\metric{\nabla X(s)^\perp}{X(s)^\perp}\right|_0^l,
\end{equation}
and
\begin{equation}\label{eq:secondderdistance2}
\left.\frac{\de^2}{\de t^2} \frac{\phi_{12}^2}{2}\right|_{t=t_0}
= \left(\left.\metric{X(s)}{\frac{\gamma_{12,t_0}(s)}{\norm{\gamma_{12,t_0}(s)}}}\right|_0^l\right)^2 + l\left.\metric{\nabla X(s)^\perp}{X(s)^\perp}\right|_0^l,
\end{equation}
where $l=\phi_{12}(0)<2r^\ast$.
\end{thm}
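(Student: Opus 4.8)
The plan is to compute the second derivative of $\phi_{12}(t)$ by viewing it as the length of the geodesic variation $\alpha(t,s)$ and applying the second variation of arc length formula. First I would recall the first variation: since each $s\mapsto\alpha(t,s)$ is a unit-speed geodesic of length $\phi_{12}(t)$, we have $\phi_{12}(t)=\int_0^{\phi_{12}(t_0)}\norm{\partial_s\alpha}\,\de s$ at $t=t_0$ (with suitable reparametrization), and $\frac{\de\phi_{12}}{\de t}=\metric{\partial_t\alpha}{\partial_s\alpha}\big|_0^l$ — this recovers Proposition~\ref{prop:dertriangdist}. Differentiating once more in $t$ and using $\nabla_{\partial_s}\partial_s\alpha=0$ (geodesic equation), the symmetry $\nabla_{\partial_t}\partial_s\alpha=\nabla_{\partial_s}\partial_t\alpha$, and the fact that $\partial_t\alpha$ at $s=0,l$ equals $\dot\gamma_1,\dot\gamma_2$ which are themselves tangent to geodesics (so their $\nabla_{\partial_t}$-derivative vanishes), the boundary terms involving $\nabla_{\partial_t}\partial_t\alpha$ drop and one is left with the standard index-form expression. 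Writing $X(s)=\partial_s\alpha|_{t=t_0}$ (the Jacobi field) and decomposing $X=X^\parallel+X^\perp$ along $\gamma_{12,t_0}$ via Proposition~\ref{prop:Jacobidecomp}, the tangential part $X^\parallel$ contributes nothing to the second derivative of \emph{length} because $\metric{\nabla X^\parallel}{\dot\gamma_{12}}$ and the curvature term vanish on it; this yields \eqref{eq:secondderdistance}.

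For \eqref{eq:secondderdistance2}, the cleanest route is to write $\frac{\phi_{12}^2}{2}$ and apply the product/chain rule: $\frac{\de^2}{\de t^2}\frac{\phi_{12}^2}{2} = \left(\frac{\de\phi_{12}}{\de t}\right)^2 + \phi_{12}\frac{\de^2\phi_{12}}{\de t^2}$, evaluated at $t=t_0$ where $\phi_{12}(t_0)=l$. Then I substitute the first-variation formula $\frac{\de\phi_{12}}{\de t}\big|_{t_0}=\metric{X(s)}{\dot\gamma_{12,t_0}(s)}\big|_0^l = \metric{X(s)}{\frac{\gamma_{12,t_0}(s)}{\norm{\gamma_{12,t_0}(s)}}}\big|_0^l$ — matching the paper's notation (here $\gamma_{12,t_0}(s)$ in the displayed formula should be read as $\dot\gamma_{12,t_0}(s)$, the unit tangent) — and \eqref{eq:secondderdistance} for the second factor, giving exactly the stated identity.

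The main obstacle, and the step that needs genuine care rather than routine manipulation, is justifying that the boundary contributions from $\nabla_{\partial_t}\partial_t\alpha$ really do vanish in the second variation formula. This hinges on choosing the variation $\alpha$ so that the transversal curves $t\mapsto\alpha(t,0)$ and $t\mapsto\alpha(t,l)$ are the geodesics $\gamma_1$ and $\gamma_2$ themselves, so that $\nabla_{\partial_t}\partial_t\alpha=0$ at $s=0$ and $s=l$; one must also handle the fact that the parameter range $[0,\phi_{12}(t)]$ for $s$ varies with $t$, which is the usual source of the extra $\big(\frac{\de\phi_{12}}{\de t}\big)^2$-type term and must be tracked when passing between the unit-speed and fixed-interval parametrizations. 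A further small point is the regularity of $\gamma_{12,t}(s)$ in $t$ near $t_0$, which is guaranteed since $l=\phi_{12}(0)<2r^\ast$ keeps the endpoints within a convex ball where the minimizing geodesic depends smoothly on its endpoints, so $\alpha$ is a genuine smooth variation and all the differentiations above are legitimate. Once these points are settled, the computation is the textbook second-variation argument specialized to this geodesic hinge.
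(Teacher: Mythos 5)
Your proposal is correct and takes essentially the same route as the paper: the second variation of arc length (with the boundary term vanishing because the transversal curves $\gamma_1,\gamma_2$ are geodesics, so $\nabla_{\partial_t}\partial_t\alpha=0$ at $s=0$ and $s=l$), the orthogonal Jacobi-field decomposition of Proposition~\ref{prop:Jacobidecomp}, and the chain rule $\frac{\de^2}{\de t^2}\frac{\phi_{12}^2}{2}=\bigl(\frac{\de \phi_{12}}{\de t}\bigr)^2+\phi_{12}\frac{\de^2 \phi_{12}}{\de t^2}$ combined with the first variation formula for \eqref{eq:secondderdistance2}. The one step you leave implicit---using the Jacobi equation satisfied by $X^\perp$ to rewrite the index-form integrand as $\frac{\de}{\de s}\metric{\nabla X^\perp}{X^\perp}$, so that the integral collapses to the boundary expression in \eqref{eq:secondderdistance}---is exactly the manipulation the paper's proof writes out, and it is standard for Jacobi fields without conjugate points, so it does not constitute a gap.
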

\begin{proof}
From \cite[page 76]{Chavel:book06} we get
\begin{equation}
   \left.\frac{\de^2 \phi_{12}}{\de t^2}\right|_{t=t_0}= \left.\metric{\nabla_{\partial_t} X}{\dot\gamma_{12,t}}\right|_a^b +\int_a^b\left(\norm{\nabla X ^\perp}^2- R(\dot\gamma_{12,t},X^\perp,\dot\gamma_{12,t},X^\perp)\right) \de s
\end{equation}
Then, notice that since $X^\perp$ is a Jacobi field (Proposition~\ref{prop:Jacobidecomp}), we have $- R(\dot\gamma_{12,t},X^\perp,\dot\gamma_{12,t},X^\perp) = \metric{\nabla\nabla X^\perp}{X^\perp}= \dert \metric{\nabla X^\perp}{X^\perp}-\metric{\nabla X^\perp}{\nabla X^\perp} = \dert \metric{\nabla X^\perp}{X^\perp}-\norm{\nabla X^\perp}^2$. Hence, we have
\begin{equation}
  \left.\frac{\de^2 \phi_{12}}{\de t^2}\right|_{t=t_0} \!\!\!\!=\!\! \int_a^b \dert \metric{\nabla X^\perp}{X^\perp} \de s= \left.\metric{\nabla X(s)^\perp}{X(s)^\perp}\right|_0^l,
\end{equation}
which is \eqref{eq:secondderdistance}. Equation \eqref{eq:secondderdistance2} follows from the fact that $\left.\frac{\de^2}{\de t^2} \frac{\phi_{12}^2}{2}\right|_{t=t_0}=\left(\left.\frac{\de \phi_{12}}{\de t}\right|_{t=t_0}\right)^2+\left.\phi_{12}\frac{\de^2 \phi_{12}}{\de t^2}\right|_{t=t_0}$.
\end{proof}

\emph{Remark:} Notice that the second derivative of $\phi_{12}$ depends only on the orthogonal component of the Jacobi field, $X^\perp$. Therefore, any two pairs of geodesics having the same $X^\perp(0)=\dot\gamma_1(t_0)^\perp$ and $X^\perp(l)=\dot\gamma_2(t_0)^\perp$, will have the same orthogonal Jacobi field component and will yeld the same second derivative of the distance $\phi_{12}$. However, the tangential components of $\dot\gamma_1(t_0)$ and $\dot\gamma_2(t_0)$ play a role in the second derivative of the \emph{squared} distance.

\subsubsection{Manifolds with bounded curvature}
In this section we will give bounds on the second derivative of the squared distance function given in \eqref{eq:secondderdistance2} for more general Riemannian manifolds in terms of the curvature bounds $\Delta$ and $\delta$. In particular, we will show the following result.
\begin{thm}\label{thm:secondderboundcurv}
 Define two geodesics $\gamma_1, \gamma_2 : (-\epsilon,\epsilon) \to \M$ such that $\gamma_2(t) \in \calB_\M(\gamma_1(t),\inj\M)$ for all $t \in (-\epsilon,\epsilon)$. Let $\phi_{12}(t)=d(\gamma_1(t),\gamma_2(t))$ and define $l=\phi_{12}(0)$. Then
 \begin{equation}\label{eq::secondderboundcurv1}
  \left.\frac{\de^2 \phi_{12}}{\de t^2}\right|_{t=t_0} \leq l\left(\frac{C_\delta(l)}{S_\delta(l)}+\frac{1}{S_\Delta(l)}\right)\bigl(\norm{\dot{\gamma}_1^\perp(0)}^2+\norm{\dot{\gamma}_2^\perp(0)}^2\bigr)
 \end{equation}
and
 \begin{equation}\label{eq::secondderboundcurv2}
  \left.\frac{\de^2 \phi_{12}}{\de t^2}\right|_{t=t_0} \leq \mu^d_{max}(l)\bigl(\norm{\dot{\gamma}_1(0)}^2+\norm{\dot{\gamma}_2(0)}^2\bigr),
 \end{equation}
where $\mu_{max}^d(l)=\max\{2,l\left(\frac{C_\delta(l)}{S_\delta(l)}+\frac{1}{S_\Delta(l)}\right)\}$.
\end{thm}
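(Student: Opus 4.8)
The plan is to reduce the bound to the orthogonal Jacobi‐field expression for $\frac{\de^2\phi_{12}}{\de t^2}$ supplied by Theorem~\ref{thm:secondder}, and then to estimate that expression with the two Rauch comparison theorems, one using the upper curvature bound $\Delta$ and one the lower bound $\delta$. Write $\gamma=\gamma_{12,t_0}$ for the unit‑speed connecting geodesic, $l=\phi_{12}(t_0)$, and let $X$ be the Jacobi variation field along $\gamma$ with $X(0)=\dot\gamma_1(t_0)$ and $X(l)=\dot\gamma_2(t_0)$, so that by \eqref{eq:secondderdistance}
\[
\left.\frac{\de^2\phi_{12}}{\de t^2}\right|_{t=t_0}=\left.\metric{\nabla X^\perp(s)}{X^\perp(s)}\right|_0^l,
\]
where $X^\perp$ is the component of $X$ orthogonal to $\dot\gamma$, itself a Jacobi field by Proposition~\ref{prop:Jacobidecomp}, with $X^\perp(0)=\dot\gamma_1^\perp(t_0)$ and $X^\perp(l)=\dot\gamma_2^\perp(t_0)$. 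Since $l<\inj\M$, $\gamma$ carries no conjugate points, so both Rauch theorems apply.

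The first key step is to split $X^\perp=X_1+X_2$, where $X_1$ and $X_2$ are the unique Jacobi fields along $\gamma$ with $X_1(0)=0$, $X_1(l)=\dot\gamma_2^\perp(t_0)$ and $X_2(0)=\dot\gamma_1^\perp(t_0)$, $X_2(l)=0$ (a sum of Jacobi fields is Jacobi, and a Jacobi field is determined by its two boundary values \cite[Chapter 2, Lemma 2.4]{Sakai:book96}; one checks that $X_1,X_2$ remain orthogonal to $\gamma$ since their tangential parts are affine and vanish at the endpoints). Expanding the bracket and using $X_1(0)=0$ and $X_2(l)=0$,
\begin{multline*}
\left.\metric{\nabla X^\perp}{X^\perp}\right|_0^l=\metric{\nabla X_1(l)}{X_1(l)}-\metric{\nabla X_2(0)}{X_2(0)}\\
+\metric{\nabla X_2(l)}{X_1(l)}-\metric{\nabla X_1(0)}{X_2(0)}.
\end{multline*}
The two ``diagonal'' terms are controlled by Rauch Comparison Theorem~\ref{thm:RCT2} (lower bound $\delta$): applied to $X_1$ it gives $\metric{\nabla X_1(l)}{X_1(l)}\le\frac{C_\delta(l)}{S_\delta(l)}\norm{\dot\gamma_2^\perp(t_0)}^2$, and applied to the reversed field $\bar X_2(s)=X_2(l-s)$ along $s\mapsto\gamma(l-s)$, which vanishes at $s=0$ and has $\nabla\bar X_2(l)=-\nabla X_2(0)$, it gives $-\metric{\nabla X_2(0)}{X_2(0)}\le\frac{C_\delta(l)}{S_\delta(l)}\norm{\dot\gamma_1^\perp(t_0)}^2$.

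The ``cross'' terms are the main obstacle, because the Rauch theorems only directly bound a Jacobi field against its derivative at the endpoint where it vanishes. I would use that the mixed quantity $\metric{\nabla X_1}{X_2}-\metric{X_1}{\nabla X_2}$ is constant along $\gamma$ (differentiate and invoke metric compatibility, the Jacobi equation, and the pair‑symmetry of $R$); evaluating at $s=0$ and $s=l$ with $X_1(0)=X_2(l)=0$ yields $\metric{\nabla X_2(l)}{X_1(l)}=-\metric{\nabla X_1(0)}{X_2(0)}$, so the two cross terms together equal $-2\metric{\nabla X_1(0)}{X_2(0)}$. Since $X_1$ vanishes at $0$, the endpoint‑derivative bound of Rauch Comparison Theorem~\ref{thm:RCT1} (upper bound $\Delta$) gives $\norm{\nabla X_1(0)}\le\norm{X_1(l)}/S_\Delta(l)=\norm{\dot\gamma_2^\perp(t_0)}/S_\Delta(l)$, so by Cauchy--Schwarz and the arithmetic--geometric mean inequality the cross terms sum to at most $2\norm{\dot\gamma_1^\perp(t_0)}\norm{\dot\gamma_2^\perp(t_0)}/S_\Delta(l)\le\bigl(\norm{\dot\gamma_1^\perp(t_0)}^2+\norm{\dot\gamma_2^\perp(t_0)}^2\bigr)/S_\Delta(l)$. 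Summing the four estimates gives $\left.\metric{\nabla X^\perp}{X^\perp}\right|_0^l\le\bigl(\frac{C_\delta(l)}{S_\delta(l)}+\frac{1}{S_\Delta(l)}\bigr)\bigl(\norm{\dot\gamma_1^\perp(0)}^2+\norm{\dot\gamma_2^\perp(0)}^2\bigr)$, which together with \eqref{eq:secondderdistance} gives \eqref{eq::secondderboundcurv1}.

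Finally, \eqref{eq::secondderboundcurv2} follows by feeding this perpendicular estimate into the squared‑distance identity \eqref{eq:secondderdistance2}: its first‑variation term equals $\metric{\dot\gamma_2(t_0)}{\dot\gamma(l)}-\metric{\dot\gamma_1(t_0)}{\dot\gamma(0)}$, whose square is at most $2\bigl(\norm{\dot\gamma_1^\parallel(0)}^2+\norm{\dot\gamma_2^\parallel(0)}^2\bigr)$, while the remaining term is $l$ times the perpendicular estimate; summing and using $\norm{\dot\gamma_i}^2=\norm{\dot\gamma_i^\parallel}^2+\norm{\dot\gamma_i^\perp}^2$ together with $\max\{a,b\}(x+y)\ge ax+by$ for $x,y\ge0$ yields exactly $\mu_{max}^d(l)\bigl(\norm{\dot\gamma_1(0)}^2+\norm{\dot\gamma_2(0)}^2\bigr)$. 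Besides the cross terms, the remaining care is in the tangential/perpendicular bookkeeping, so that the ``$2$'' in $\mu_{max}^d$ comes out precisely from the first‑variation square, and in checking that $\gamma$ and the comparison geodesics in $\M_\delta$, $\M_\Delta$ stay below their conjugate radii, which is where $l<\inj\M$ (and $l\le 2r^\ast$ in the intended application) enters.
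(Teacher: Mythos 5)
Your proposal is correct and follows essentially the same route as the paper's proof: the same splitting $X^\perp=X_1+X_2$ into Jacobi fields each vanishing at one endpoint, Rauch Comparison Theorem~\ref{thm:RCT2} (lower bound $\delta$) for the diagonal terms, Rauch Comparison Theorem~\ref{thm:RCT1} (upper bound $\Delta$) with Cauchy--Schwarz and the arithmetic--geometric mean inequality for the cross terms, and the same tangential/perpendicular bookkeeping in \eqref{eq:secondderdistance2} that produces the constant $2$ in $\mu^d_{max}(l)$. The only (harmless) deviation is your Wronskian-constancy argument showing the two cross terms coincide, which lets you apply Theorem~\ref{thm:RCT1} only to $X_1$, whereas the paper bounds the two cross terms separately by also applying it to the reversed field $X_2$; both yield the identical $1/S_\Delta(l)$ contribution.
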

Also, by Definition \ref{def:hessianRiemannian}, $\mu_{max}(l)$ is a bound on the Hessian of the squared distance evaluated at $\bigl(\gamma_1(0),\gamma_2(0)\bigr)$. In addition, these bounds are sharp, in the sense that if $\delta=\Delta=\kappa$, we obtain the same bounds from constant curvature case \cite{Tron:TR11}.
\begin{proof}
We start from \eqref{eq:secondderdistance}. 
Since it is not easy to give a simple close form expression of $\nabla X$ in terms of $X$, we will give a way to bound each one of the terms $\metric{\nabla X^\perp}{X^\perp}$ at $s=0$ and $s=l$. We will decompose the Jacobi field $X^\perp$ in two components as $X^\perp=X_1+X_2$ where $X_1$ and $X_2$ are Jacobi fields satisfying the conditions $X_1(0)=0$, $X_1(l)=X(l)^\perp$, $X_2(0)=X(0)^\perp$ and $X_2(l)=0$. The main reason to do this is that $X_1$ and $X_2$ have now the property of vanishing at one of the endpoints, and we can therefore exploit results from standard Riemannian geometry texts. Note that since $\gamma_2(t) \in \calB_\M(\gamma_1(t),\inj\M)$, we have $l<2r^\ast$ and, by the Morse-Sch\"onberg Theorem \cite[p. 86]{Chavel:book06} $X_1$ and $X_2$ have no conjugate points on $\gamma_{12}$. We can therefore apply the Rauch comparison theorems of \S\ref{sec:general-results} of this Appendix to get bounds on these Jacobi fields.

More concretely, the second derivative of the distance is given by
\begin{multline}
  \left.\frac{\de^2 \phi_{12}}{\de t^2}\right|_{t=t_0}= \left.\metric{\nabla X(s)^\perp}{X(s)^\perp}\right|_0^l
= \metric{\nabla X(l)^\perp}{X(l)^\perp}-\metric{\nabla X(0)^\perp}{X(0)^\perp}\\
=\metric{\nabla X_1(l)}{X_1(l)}+\metric{\nabla X_2(l)}{X_1(l)}
-\metric{\nabla X_1(0)}{X_2(0)}-\metric{\nabla X_2(0)}{X_2(0)}\label{eq:secondderinner}
\end{multline}

Using Theorem \ref{thm:RCT2} (Rauch Comparison Theorem II) we have
\begin{align}
 \metric{\nabla X_1(l)}{X_1(l)}\leq \frac{C_\delta(l)}{S_\delta(l)} \norm{X(l)^\perp}^2, &&
 \metric{-\nabla X_2(0)}{X_2(0)}\leq \frac{C_\delta(l)}{S_\delta(l)} \norm{X(0)^\perp}^2. \label{eq:innerproductder}
\end{align}
Note that for $X_2$, in order to apply Theorem Theorem \ref{thm:RCT2}, we need to reverse the parametrization of $\gamma_{12,t_0}(s)$ as $s^\prime=l-s$. This has the effect that $\nabla X_2(s^\prime)|_{s^\prime=l}=\nabla_{-\dot\gamma_{12,t_0}(s)} X_2(s)|_{s=0}=-\nabla X_2(0)$. This explains the negative sign in the second inequality of \eqref{eq:innerproductder}.

Using the Cauchy-Schwarz inequality, Theorem \ref{thm:RCT1} (Rauch Comparison Theorem I) and the inequality $ab\leq \frac{a^2+b^2}{2}$ we have
\begin{align}
 \metric{\nabla X_2}{X_1^\perp(l)}&
  \leq \frac{1}{S_\Delta(l)} \norm{X_2(0)}\norm{X_1(l)}
  \leq \frac{1}{2 S_\Delta(l)} (\norm{X(0)^\perp}^2+\norm{X(l)^\perp}^2) \label{eq:innerproductder2}
\end{align}
\begin{align}
-\metric{\nabla X_1(0)}{X_2(0)}&
  \leq \frac{1}{S_\Delta(l)} \norm{X_1(l)}\norm{X_2(0)} 
  \leq \frac{1}{2 S_\Delta(l)}(\norm{X(0)^\perp}^2+\norm{X(l)^\perp}^2) \label{eq:innerproductder3}
\end{align}

Combining \eqref{eq:innerproductder}, \eqref{eq:innerproductder2} and \eqref{eq:innerproductder3} into \eqref{eq:secondderinner}, we get
\begin{equation}
 \left.\frac{\de^2 \phi_{12}}{\de t^2}\right|_{t=t_0} \leq \left(\frac{C_\delta(l)}{S_\delta(l)}+\frac{1}{S_\Delta(l)}\right)(\norm{X(0)^\perp}^2+\norm{X(l)^\perp}^2),
\end{equation}
which is equivalent to \eqref{eq::secondderboundcurv1}. Combining this with \eqref{eq:secondderdistance} we obtain the equivalent of \eqref{eq::secondderboundcurv2}:
\begin{multline}
 \left.\frac{\de^2}{\de t^2}\frac{\phi_{12}^2}{2}\right|_{t=t_0}
\leq(\norm{X(0)^\parallel}+\norm{X(l)^\parallel})^2+l\left(\frac{C_\delta(l)}{S_\delta(l)}+\frac{1}{S_\Delta(l)}\right)(\norm{X(0)^\perp}^2+\norm{X(l)^\perp}^2) \\
 \leq \mu_{max}^d(l)\bigl(\norm{X(0)}^2+\norm{X(l)}^2\bigr).
\end{multline}
\end{proof}

\end{appendix}




\ifCLASSOPTIONcaptionsoff
  \newpage
\fi

\end{document}